\documentclass[12pt]{article}
\usepackage[titletoc,title]{appendix}
\usepackage{amsmath,amsfonts,amssymb,bm,amsthm}
\usepackage[colorlinks,citecolor=blue,linkcolor=blue]{hyperref}
%\usepackage{tikz}
%\usetikzlibrary{shapes,arrows}
%\usetikzlibrary{intersections,shapes.arrows}
%\usepackage{amsmath,amsfonts,amssymb,bm,url,amsthm}
%\usepackage{amsmath,amsfonts,amssymb,bm,hyperref,amsthm,refcheck}
\numberwithin{equation}{section}
\newtheorem{theorem}{Theorem}[section]
\newtheorem{lemma}[theorem]{Lemma}

\newtheorem{proposition}[theorem]{Proposition}
\theoremstyle{definition}

\newtheorem{definition}[theorem]{Definition}
\newtheorem{assumption}[theorem]{Assumption}
\theoremstyle{remark}
\newtheorem{remark}[theorem]{Remark}

%%%% remove this if you don't like sans serif
%\usepackage{ebgaramond}
\usepackage[T1]{fontenc}
%\renewcommand{\familydefault}{cmss}
%%%%
%%%%%%%%%%%%%%%%%%%%%%%%%%%%%%%%%%
\usepackage[top=1in, bottom=1in, left=1in, right=1in]{geometry}
\setlength{\headsep}{1cm}
\setlength{\footskip}{1cm}
%\textwidth15cm
%%%%%%%%%%%%%%%%%%%%%%%%%%%%%%%%%%
\usepackage{fancyhdr}
\pagestyle{myheadings}
\setlength{\headheight}{15pt}

\pagestyle{fancy}
\fancyhf{}
%\fancyhead[FRE,FLO]{}%week number in weekly handouts
\lhead{{\bfseries Matteo Capoferri} and {\bfseries Dmitri Vassiliev}}
\rhead{{\bfseries Page \thepage}}
\rfoot{{\bfseries Invariant subspaces of elliptic systems I: pseudodifferential projections}}
%%%%%%%%%%%%%%%%%%%%%%%%%%%%%%%%%%
%\newcommand{\dbar}{{\mkern3mu\mathchar'26\mkern-12mu \mathrm{d}}}
%\newcommand{\ir}{\mathrm{i}}
%\newcommand{\er}{\mathrm{e}}
%\newcommand{\dr}{\mathrm{d}}

%%%%%%%%%%%%%%%%%%%%%%%%%%%%%%%%%%
\usepackage{enumerate}
%%%%%%%%%%%%%%%%%%%%%%%%%%%%%%%%%%
\usepackage{xcolor,cancel}
\usepackage{relsize}%for resizing mathematical symbols

%%%%%%%%%%%%%%%%%%%%%%%%%%%%%%
%when checking references, uncomment
%\usepackage{refcheck}  
%%%%%%%%%%%%%%%%%%%%%%%%%%%%%%

\usepackage{cancel}

\allowdisplaybreaks

\begin{document}

\title{Invariant subspaces of elliptic systems I:\\
pseudodifferential projections}
%\title{Invariant subspaces of elliptic systems II:\\
%spectral theory}
%\title{Pseudodifferential projections\\
%and invariant subspaces of elliptic systems I}
\author{Matteo Capoferri\thanks{MC:
School of Mathematics,
Cardiff University,
Senghennydd Rd,
Cardiff CF24~4AG,
UK;\newline
CapoferriM@cardiff.ac.uk,
\url{http://mcapoferri.com}.
}
\and
Dmitri Vassiliev\thanks{DV:
Department of Mathematics,
University College London,
Gower Street,
London WC1E~6BT,
UK;
D.Vassiliev@ucl.ac.uk,
\url{http://www.ucl.ac.uk/\~ucahdva/}.
}}

%%%% this removes footnote "crosses" by the authors' names
\renewcommand\footnotemark{}
%%%%

%\date{version 86; 8 February 2022; \LaTeX{ed} \today}
\date{8 February 2022}

\maketitle
\begin{abstract}
Consider an elliptic self-adjoint pseudodifferential operator $A$ acting on $m$-columns of half-densities on a closed manifold $M$, whose principal symbol is assumed to have simple eigen\-values. We show existence and uniqueness of $m$ orthonormal pseudo\-differential projections commuting with the operator $A$ and provide an algorithm for the computation of their full symbols, as well as explicit closed formulae for their subprincipal symbols. Pseudodifferential projections yield a decomposition of $L^2(M)$ into invariant subspaces under the action of $A$ modulo $C^\infty(M)$. Furthermore, they allow us to decompose $A$ into $m$ distinct sign definite pseudodifferential operators. Finally, we represent the modulus and the Heaviside function of the operator $A$ in terms of pseudodifferential projections and discuss physically meaningful examples.

\

{\bf Keywords:} pseudodifferential projections, elliptic systems, invariant subspaces, pseudodifferential operators on manifolds.

\

{\bf 2020 MSC classes: }
primary 
58J40;
%Pseudodifferential and Fourier integral operators on manifolds
secondary
47A15,
%Invariant subspaces of linear operators
35J46,
%First-order elliptic systems
35J47,
%Second-order elliptic systems
35J48,~58J05.
%Higher-order elliptic systems
%Elliptic equations on manifolds, general theory

\end{abstract}

\tableofcontents

\allowdisplaybreaks

\section{Statement of the problem}
\label{Statement of the problem}

Let $M$ be a closed connected manifold of dimension $d\ge2$.
We denote local coordinates on~$M$ by $x=(x^1,\dots,x^d)$.

Let
$C^\infty(M)$ be the vector space of
$m$-columns of smooth complex-valued half-densities over $M$ equipped with inner product
\begin{equation}
\label{inner product on m-columns of half-densities}
\langle v,w\rangle:=\int_M v^*w\,dx\,,
\end{equation}
where $dx:=dx^1\dots dx^d$.
Here and further on the star stands for Hermitian conjugation when applied to matrices
and for adjunction with respect to \eqref{inner product on m-columns of half-densities}
when applied to operators.
By $L^2(M)$ we denote the closure of $C^\infty(M)$ with respect to
\eqref{inner product on m-columns of half-densities}.
Of course, our function spaces depend on the choice of the natural number $m$ but in order to simplify
notation we suppress this dependence.
Thus, throughout this paper $m\ge2$ is a fixed natural number.

By $H^s(M)$ we denote the usual Sobolev space, i.e.~the vector space of $m$-columns of half-densities
that are square integrable together with their partial derivatives up to order~$s$.
By $\Psi^s$ we denote the space of classical pseudodifferential operators of order $s$
with polyhomogeneous symbols, acting from $H^s(M)$ to $L^2(M)$.
For an operator $P\in\Psi^s$ we denote its matrix-valued principal and subprincipal symbols by
$P_\mathrm{prin}$ and $P_\mathrm{sub}$ respectively.
Of course, these are scalar matrix-functions in
$C^\infty(T^*M\setminus\{0\};\operatorname{Mat}(m,\mathbb{C}))$
of degree of homogeneity in momentum $s$ and $s-1$.
We also introduce refined notation for the principal symbol. Namely, we denote by $(\,\cdot\,)_{\mathrm{prin},s}$ the principal symbol of the expression within brackets, regarded as an operator in $\Psi^{-s}$. To appreciate the need for such notation, consider the following example. Let $B$ and $C$ be pseudodifferential operators in $\Psi^{-s}$ with the same principal symbol. Then, as an operator in $\Psi^{-s}$, $B-C$ has vanishing principal symbol: $(B-C)_{\mathrm{prin},s}=0$. But this tells us that $B-C$ is, effectively, an operator in $\Psi^{-s-1}$ and, as such, it may have nonvanishing principal symbol $(B-C)_{\mathrm{prin},s+1}$. This refined notation will be used whenever there is risk of confusion.

\begin{definition}
\label{definition of pseudodifferential projection}
We say that $P\in\Psi^0$ is an \emph{orthogonal pseudodifferential projection} if
\begin{equation}
\label{definition of pseudodifferential projection condition 2}
P^2=P\mod\Psi^{-\infty},
\end{equation}
\begin{equation}
\label{definition of pseudodifferential projection condition 3}
P^*=P\mod\Psi^{-\infty}.
\end{equation}
\end{definition}

\begin{definition}
\label{definition of system of pseudodifferential projections}
We call a set of $m$ orthogonal pseudodifferential projections $\{P_j\}$
an \emph{ortho\-normal pseudodifferential basis} if their principal symbols
are rank 1 matrix-functions and
\begin{equation}
\label{definition of system of pseudodifferential projections condition 1}
P_jP_k=0\mod\Psi^{-\infty}\quad\forall j\ne k,
\end{equation}
\begin{equation}
\label{definition of system of pseudodifferential projections condition 2}
\sum_jP_j=\operatorname{Id}\mod\Psi^{-\infty},
\end{equation}
where $\operatorname{Id}\in\Psi^0$ is the identity operator.
\end{definition}

It is natural to ask the following questions.

\

\textbf{Question 1\ }
Does there exist a nontrivial operator $P$ satisfying Definition~\ref{definition of pseudodifferential projection}?

\

\textbf{Question 2\ }
Assuming that the answer to Question 1 is positive,
can we choose the $P_j$'s so that they satisfy Definition~\ref{definition of system of pseudodifferential projections}?

\

The issue here is that in order to construct these pesudodifferential projections one 
has to determine the lower order (of degree of homogeneity $-1,-2,\,\dots$) components
of the symbols of the $P_j$'s so as to satisfy
\eqref{definition of pseudodifferential projection condition 2}--\eqref{definition of system of pseudodifferential projections condition 2}.
This requires solving an infinite sequence of heavily overdetermined systems of algebraic equations,
and it is not \emph{a priori} clear that these systems have solutions.
We would like to point out that great care is needed
in performing this analysis because our operators have matrix-valued symbols which in general
do not commute.

Dealing with projections in infinite-dimensional spaces is known to be a challenging task
and we believe that addressing Questions 1 and 2
is of interest in its own right.
However, these pseudodifferential projections reveal their true potential when applied to
the study of elliptic and hyperbolic systems of partial differential equations.

Let $A\in\Psi^s$, $s\in\mathbb{R}$, $s>0$, be an elliptic self-adjoint linear pseudodifferential operator,
where ellipticity means that
\begin{equation*}
\label{definition of ellipticity}
\det A_\mathrm{prin}(x,\xi)\ne0,\qquad\forall(x,\xi)\in T^*M\setminus\{0\}.
\end{equation*}

We impose the following  crucial assumption.

\begin{assumption}
\label{assumption about eigenvalues of principal symbol being simple}
The matrix-function $A_\mathrm{prin}(x,\xi)$ has simple eigenvalues.
\end{assumption}

We denote by $m^+$ (resp.~$m^-$) the number of positive (resp.~negative) eigenvalues of
$A_\mathrm{prin}(x,\xi)$.
We denote by $h^{(j)}(x,\xi)$
the eigenvalues of $A_\mathrm{prin}(x,\xi)$ enumerated in increasing order, with positive index $j=1,2,\ldots, m^+$ for positive $h^{(j)}(x,\xi)$ and negative index $j=-1,-2,\ldots, -m^-$ for negative $h^{(j)}(x,\xi)$.
Clearly, self-adjointness, ellipticity and connectedness of $M$ imply that
\begin{enumerate}[(i)]
\itemsep.1em
\item
the $h^{(j)}$ are scalar nonvanishing smooth real-valued functions on $T^*M\setminus\{0\}$,
\item
$m^+$ and $m^-$ are constant and
\item
$m^++m^-=m$.
\end{enumerate}

The spectrum of our operator $A:H^s(M)\to L^2(M)$ is discrete and accumulates to infinity.
More precisely,
if $m^+\ge1$ the spectrum accumulates to $+\infty$,
if $m^-\ge1$ the spectrum accumulates to $-\infty$, 
and if $m^+\ge1$ and $m^-\ge1$ the spectrum accumulates to~$\pm\infty$.

By $P^{(j)}(x,\xi)$ we denote the eigenprojection of $A_\mathrm{prin}(x,\xi)$  corresponding to the eigenvalue
$h^{(j)}(x,\xi)$. Assumption~\ref{assumption about eigenvalues of principal symbol being simple}
tells us that the matrix-functions $P^{(j)}(x,\xi)$ are rank 1. These eigenprojections satisfy
\begin{equation}
\label{principal symbol in terms of eigenvalues and eigenprojections}
A_\mathrm{prin}=\sum_j h^{(j)}P^{(j)}
\end{equation}
and
\begin{equation}
\label{principal symbols sum to identity}
\sum_j P^{(j)}=I,
\end{equation}
where $I$ is the $m\times m$ identity matrix.

\

\textbf{Question 3\ }
Assuming that the answer to Question 2 is positive,
can we choose the $P_j$'s so that
they commute with the operator $A$
\begin{equation}
\label{commutation condition}
[A,P_j]=0\mod\Psi^{-\infty}
\end{equation}
and
\begin{equation}
\label{definition of pseudodifferential projection condition 1bis}
(P_j)_\mathrm{prin}=P^{(j)}?
\end{equation}

\

Here, in addition to the issues highlighted in relation to Questions 1 and 2, the extra difficulty is
that the infinite sequence of overdetermined systems involves the $h^{(j)}$'s as well
as the lower order components of the full symbol of the operator $A$.

Question 3 is important in that an affirmative answer would yield a collection of projections
compatible with $A$, leading to two further natural questions.

\

\textbf{Question 4\ }
Can we exploit the pseudodifferential projections $P_j$ to advance the current understanding of
spectral asymptotics for elliptic systems?

\

\textbf{Question 5\ }
Can we exploit the pseudodifferential projections $P_j$ to advance the current understanding of
propagation of singularities for hyperbolic systems?

\

The goal of this paper is develop a comprehensive theory of pseudodifferential projections
so as to positively answer Questions 1, 2 and 3, building upon earlier results.
Questions 4 and 5 are addressed in the companion paper \cite{part2}.

\section{Main results}
\label{Main results}

Pseudodifferential projections have been used over the years, under different names and with varying degree of awareness, in many areas of mathematical analysis. %For example, the celebrated Hardy space of holomorphic functions on the disk can be defined in terms of a pseudodifferential projection. 

Mathematicians working with Toeplitz operators found in `pseudodifferential subspaces' a useful tool \cite{baum, gohberg}, and Birman and Solomyak \cite{birman} studied and characterised subspaces of sections of vector bundles which can be the range of a pseudodifferential projection.  Birman and Solomyak also provided an abstract formula for a single pseudodifferential projection with given principal symbol, obtained by integrating an appropriate resolvent along a carefully chosen path in the complex plane, see \cite[Lemma~3]{birman}.

Amongst applications in topology and index theory it is worth mentioning the works of Wojciechowski \cite{woj1, woj2}, who analysed the topological properties of the space of equivalence classes of projections with the same principal symbol (i.e.~modulo a compact operator) by means of Fredholm theory. 

A flourishing avenue of research involving pseudodifferential projections in various forms is the study of boundary value problems for elliptic operators. 
A distinguished example is the celebrated Calder\'on projector \cite{calderon,seeley1, seeley2, grieser}.
It is known
\cite[Vol.~III]{Hor}\cite{sternin}
that for an arbitrary elliptic operator on a manifold with boundary one cannot, in general, impose boundary conditions satisfying the Shapiro--Lopatinski condition. This leads to ill-defined (non Fredholm) boundary value problems. The use of pseudodifferential projections proved useful in attempts to generalise elliptic theory to such operators so as to obtain Fredholm boundary value problems, see, e.g., \cite{woj3, woj4, woj5, sternin}.

A considerable advancement in the understanding of pseudodifferential projections is due to Bolte and Glaser \cite{bolte}, who,  relying on a strategy by Cordes \cite{cordes3},  in the setting of semiclassical analysis,  construct pseudo\-differential projections in the spirit of Riesz projections. Their results establish existence and identify the minimal set of conditions that guarantee uniqueness. There are a number of differences between our approach and that of Bolte and Glaser.
\begin{enumerate}[(i)]
\itemsep.1em
\item They work with semiclassical operators on $\mathbb{R}^d$ as opposed to classical ones on a manifold~$M$.

\item The (local) construction of the symbol using Riesz projections requires one to compute the symbol of the resolvent $(A-\lambda\mathrm{Id})^{-1}\mod \Psi^{-\infty}$, which, in turn, necessitates parameter-dependent pseudodifferential calculus.  Our algorithm at every step uses and produces invariant objects and it does not involve parameter-dependent symbol classes or partitions of unity.

%\item In order to carry out contour integrations in the Riesz projection integral formula one needs to assume that the eigenvalues of the principal symbol fulfil a `hyperbolicity condition',  namely, that they are separated by some smooth positive scalar function,
%\[
%|h^{(j)}(x,\xi)-h^{(l)}(x,\xi)|\ge f(x,\xi)\ge c>0 \qquad \text{for all} \ l\ne j.
%\]
%Our approach does not require any such assumption.

\item Whereas the Riesz projection method is essentially linked to an operator $A$, our approach regards pseudodifferential projections as abstract objects, which exist and can be constructed independently of $A$.  This has the advantage of shedding light on the structure of their symbols, as well as clarifying how the degrees of freedom are used up when imposing the defining conditions
\eqref{definition of pseudodifferential projection condition 2}--\eqref{definition of pseudodifferential projection condition 1bis}.
\end{enumerate}

%Although one could say that pseudodifferential projections have been used extensively by the analysis community for several decades, there does not appear to be in the literature a systematic study of their properties. In particular, we were unable to identify a clean proof of the existence of pseudodifferential projections, as previous works essentially deal with their principal symbols only, either overlooking the details or implicitly assuming that everything else works.

For specific operators, such as the Dirac operator or matricial versions of the Klein--Gordon operator,  alternative approaches to the problem have been proposed, ones that avoid dealing with pseudodifferential projections by constructing almost-unitary operators that `microlocally diagonalise' the matrix operator at hand, see, for example, \cite{cordes,LF91,BR99,NS04,PST03}.  Pseudodifferential projections are, in a sense, `more fundamental' objects than almost-unitary operators: we refer the reader to our companion paper \cite{part2} for further comments on this matter.

We should also mention that pseudodifferential projections appeared in the form of (approximate) spectral projections in publications on the spectral theory of elliptic systems, though some of these publications are known to contain mistakes, see~\cite[Sec.~11]{CDV}.

\

Our main results can be summarised in the form of six theorems stated in this section.

\begin{theorem}
\label{Main results theorem 0}
Given a family of $m$ orthonormal rank 1 projections 
\[
P^{(j)}\in C^\infty(T^*M\setminus\{0\};\operatorname{Mat}(m;\mathbb{C}))
\]
positively homogeneous in momentum of degree zero,
there exists an orthonormal pseudo\-differential basis $\{P_j\}\subset \Psi^0$ as per Definition~\ref{definition of system of pseudodifferential projections} with $(P_j)_\mathrm{prin}=P^{(j)}$.
\end{theorem}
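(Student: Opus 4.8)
The plan is to construct the symbols of the $P_j$'s order by order, starting from the prescribed principal symbols $P^{(j)}$ and correcting by lower-order terms so as to satisfy the defining conditions
\eqref{definition of pseudodifferential projection condition 2}, \eqref{definition of pseudodifferential projection condition 3},
\eqref{definition of system of pseudodifferential projections condition 1} and \eqref{definition of system of pseudodifferential projections condition 2}.
First I would fix, for each $j$, a pseudodifferential operator $P_j^{(0)}\in\Psi^0$ with principal symbol $P^{(j)}$ (for instance by quantising $P^{(j)}$ in a fixed coordinate atlas with a partition of unity, or by invoking the companion construction already carried out for a single projection); the hypothesis that the $P^{(j)}$ are orthonormal rank 1 projections, i.e.\ $(P^{(j)})^2=P^{(j)}$, $(P^{(j)})^*=P^{(j)}$, $P^{(j)}P^{(k)}=0$ for $j\ne k$ and $\sum_j P^{(j)}=I$, guarantees that all of
\eqref{definition of pseudodifferential projection condition 2}--\eqref{definition of system of pseudodifferential projections condition 2}
hold at the level of principal symbols, i.e.\ modulo $\Psi^{-1}$. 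The task is then to upgrade this to equality modulo $\Psi^{-\infty}$.

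The core of the argument is an inductive scheme. Suppose we have produced operators $P_j^{(n)}\in\Psi^0$ such that
\eqref{definition of pseudodifferential projection condition 2}--\eqref{definition of system of pseudodifferential projections condition 2}
hold modulo $\Psi^{-n-1}$. We seek corrections $Q_j\in\Psi^{-n-1}$ and set $P_j^{(n+1)}:=P_j^{(n)}+Q_j$. Writing $R_j:=(P_j^{(n)})^2-P_j^{(n)}\in\Psi^{-n-1}$ etc., a short computation shows that, because the principal symbols of the $P_j^{(n)}$ are the mutually orthogonal rank 1 projections $P^{(j)}$, the conditions on $P_j^{(n+1)}$ reduce, at order $-n-1$, to a \emph{linear} algebraic system for the principal symbols $q_j:=(Q_j)_{\mathrm{prin},n+1}$ of the corrections: schematically,
\[
P^{(j)}q_j+q_jP^{(j)}-q_j=-\,(R_j)_{\mathrm{prin},n+1},\qquad
q_j^*=q_j+\dots,\qquad
\sum_j q_j=\dots,
\]
together with the off-diagonal relations coming from \eqref{definition of system of pseudodifferential projections condition 1}. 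One then solves this system fibrewise over $T^*M\setminus\{0\}$ using the spectral decomposition $I=\sum_k P^{(k)}$: decomposing each matrix into its $P^{(j)}$--block components, the ``diagonal'' block of $q_j$ is forced by self-adjointness and the idempotency defect to a specific value, while the ``off-diagonal'' blocks are pinned down by the conditions $P_jP_k=0$; one must then check the consistency (Fredholm-alternative type) conditions, which hold precisely because the order-$(-n)$ relations were already satisfied — this is where the algebraic identities among $R_j$ and the $P^{(j)}$ (of the form obtained by differentiating $P^2=P$, multiplying by $P$ on either side, etc.) are used. Having solved for the $q_j$ fibrewise and smoothly, one quantises them to obtain the $Q_j$, completing the inductive step.

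Finally I would take an asymptotic sum $P_j\sim P_j^{(0)}+Q_j^{(1)}+Q_j^{(2)}+\cdots$ (Borel summation of symbols), obtaining $P_j\in\Psi^0$ satisfying
\eqref{definition of pseudodifferential projection condition 2}--\eqref{definition of system of pseudodifferential projections condition 2}
modulo $\Psi^{-\infty}$, which is exactly the assertion that $\{P_j\}$ is an orthonormal pseudodifferential basis with $(P_j)_\mathrm{prin}=P^{(j)}$. The main obstacle I anticipate is the bookkeeping in the inductive step: the symbol of a product of pseudodifferential operators is \emph{not} just the product of symbols — there are lower-order contributions from the composition formula involving derivatives of the symbols — so showing that the order-$(-n-1)$ equations genuinely form a \emph{closed linear solvable} system, and that the solvability/consistency conditions are automatically met thanks to the previously established lower-order identities, requires careful non-commutative algebra with the $P^{(j)}$. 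Verifying that the off-diagonal conditions \eqref{definition of system of pseudodifferential projections condition 1} and the completeness condition \eqref{definition of system of pseudodifferential projections condition 2} are compatible with the individual projection conditions — i.e.\ that one is not over-constraining the $q_j$ — is the delicate heart of the proof, and exploiting the rank 1, mutually orthogonal structure of the $P^{(j)}$ is what makes it go through.
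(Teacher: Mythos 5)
Your proposal is correct and follows essentially the same route as the paper: an order-by-order inductive construction in which, at each stage, the block decomposition with respect to the $P^{(j)}$ turns the conditions into a linear algebraic system for the next symbol term, whose solvability conditions are verified using the previously established lower-order identities, and Borel summation finishes the job. The paper's execution makes your plan precise — in particular it observes that the completeness condition $\sum_j P_j=\operatorname{Id}\mod\Psi^{-\infty}$ is automatic once idempotency, symmetry and mutual orthogonality hold (so there is no genuine over-determination there), and it identifies exactly which block components of the correction are fixed by idempotency, which by orthonormality, and which remain free (the anti-Hermitian off-diagonal part, later spent on commutation with $A$).
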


One can show\footnote{We are grateful to Daniel Grieser for raising this issue.} that the operators $P_j$ in Theorem~\ref{Main results theorem 0} can be modified, by adding $\Psi^{-\infty}$ terms, in such a way that conditions \eqref{definition of pseudodifferential projection condition 2}--\eqref{definition of system of pseudodifferential projections condition 1} are satisfied exactly, and not merely modulo $\Psi^{-\infty}$. Indeed, consider the zero order pseudodifferential operator $B:=\frac12\sum_jb_j(P_j+P_j^*)\,$, where the $b_j$ are some distinct real numbers. Its essential spectrum is the set of $m$ points~$b_j\,$. For a given $j\,$, choose a contour $C_j$ in the complex plane which encircles the point $b_j$ and no other points of the essential spectrum and avoids isolated eigenvalues of finite multiplicity. Then integration of $(B-\lambda\operatorname{Id})^{-1}$ over the contour $C_j$ will produce the modified pseudo\-differential projection $P_j\,$. Furthermore, if we choose our contours $C_j$ in such a way that they do not intersect and, in total, encircle the whole spectrum of $B$, then condition \eqref{definition of system of pseudodifferential projections condition 2} will also be satisfied exactly.

\begin{theorem}
\label{Main results theorem 1}
Under Assumption~\ref{assumption about eigenvalues of principal symbol being simple},  there exist $m$ pseudodifferential operators $P_j\in\Psi^0$ satisfying
Definition~\ref{definition of system of pseudodifferential projections}
and conditions
\eqref{commutation condition}, \eqref{definition of pseudodifferential projection condition 1bis},
and these are uniquely determined, modulo $\Psi^{-\infty}$, by the operator~$A$.
\end{theorem}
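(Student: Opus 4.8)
The plan is to construct the operators $P_j$ iteratively, order by order in the symbol expansion, and then prove uniqueness by a similar descent argument. First I would fix, at the level of principal symbols, $(P_j)_{\mathrm{prin}}=P^{(j)}$ as dictated by \eqref{definition of pseudodifferential projection condition 1bis}; this choice is forced. Then I would seek the full symbol of each $P_j$ as an asymptotic series $\sum_{k\ge 0}(P_j)_{-k}$, where $(P_j)_{-k}$ is positively homogeneous in momentum of degree $-k$, and impose the conditions \eqref{definition of pseudodifferential projection condition 2}--\eqref{definition of system of pseudodifferential projections condition 2}, \eqref{commutation condition} recursively. At each step one gets a system of algebraic equations for $(P_j)_{-k}$ whose right-hand side depends only on the already-determined lower-index components $(P_i)_{-l}$, $l<k$, together with the full symbol of $A$ and the eigenvalues $h^{(j)}$. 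The key structural observation is that, because the $h^{(j)}$ are distinct (Assumption~\ref{assumption about eigenvalues of principal symbol being simple}), the "off-diagonal blocks" $P^{(i)}(\,\cdot\,)P^{(j)}$, $i\ne j$, of the unknown are pinned down by the commutation relation \eqref{commutation condition}, since $[A_{\mathrm{prin}},\,\cdot\,]$ is invertible on the off-diagonal part; meanwhile the "diagonal blocks" $P^{(j)}(\,\cdot\,)P^{(j)}$ are pinned down by the idempotency and self-adjointness conditions \eqref{definition of pseudodifferential projection condition 2}--\eqref{definition of pseudodifferential projection condition 3}. This is exactly the mechanism by which the heavily overdetermined system turns out to be consistent.

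Concretely, I would carry out the following steps. Step 1: write down the order-$(-k)$ consequences of $P_j^2=P_j$, $P_j^*=P_j$, $[A,P_j]=0$, and $\sum_j P_j=\operatorname{Id}$, using the composition formula for symbols; each produces an equation of the form (linear operator applied to $(P_j)_{-k}$) $=$ (explicit expression in lower-order data). Step 2: decompose $(P_j)_{-k}$ into blocks $P^{(i)}(P_j)_{-k}P^{(l)}$ relative to the spectral decomposition \eqref{principal symbols sum to identity} of $A_{\mathrm{prin}}$, and show that the commutator equation determines every block with $i\ne l$ while the idempotency/self-adjointness equations determine the blocks with $i=l$; verify that the summation condition \eqref{definition of system of pseudodifferential projections condition 2} is then automatically consistent. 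Step 3: check the algebraic consistency conditions that make the overdetermined system solvable — this is where one must exploit that the lower-order data, having been constructed to satisfy all the previous-order relations, forces the relevant compatibility identities at the current order. Step 4: invoke Borel summation to assemble the formal symbols into genuine operators $P_j\in\Psi^0$, which then satisfy all required identities modulo $\Psi^{-\infty}$. Step 5: for uniqueness, suppose $\{\widetilde P_j\}$ is another such family; then $\widetilde P_j-P_j$ has vanishing principal symbol, and an induction on the order of the leading nonvanishing symbol component, using the same block decomposition and the same invertibility of $[A_{\mathrm{prin}},\,\cdot\,]$ off-diagonal together with the idempotency relation on-diagonal, shows this difference lies in $\Psi^{-\infty}$.

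The main obstacle I expect is Step 3: verifying that the overdetermined systems are genuinely consistent at every order. One has more equations than unknowns because $P^2=P$, $P^*=P$, $[A,P]=0$ and $\sum_j P_j=\operatorname{Id}$ each impose constraints, and a priori their order-$(-k)$ parts could conflict. The resolution should come from differentiating the identities: for instance, applying the composition formula to the tautology $P_j\cdot P_j^2=P_j^2\cdot P_j$ or to $A\,P_j^2=P_j^2\,A$ yields relations among the order-$(-k)$ data that are automatically satisfied once the order $\le -(k-1)$ data solve their own equations. Making this bookkeeping precise — keeping careful track of the noncommutativity of the matrix symbols, as flagged in the introduction — and packaging it so that the induction closes cleanly, is the technical heart of the argument. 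A secondary point requiring care is ensuring the constructed symbols are invariantly defined (so that the $P_j$ are honest operators on the manifold $M$, not just locally), which is handled by using the subprincipal-symbol formalism and invariant composition formulae rather than raw local symbols.
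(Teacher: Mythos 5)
Your plan follows essentially the same route as the paper's proof: an order-by-order construction of the symbols in which the block decomposition relative to the eigenprojections of $A_{\mathrm{prin}}$ lets idempotency determine the diagonal blocks and the commutation relation determine the off-diagonal ones (invertibility of $[A_{\mathrm{prin}},\,\cdot\,]$ there being exactly where Assumption~\ref{assumption about eigenvalues of principal symbol being simple} enters), with the compatibility of the overdetermined systems extracted from the already-satisfied lower-order identities (e.g.\ from $((P_{j,k-1})^2-P_{j,k-1})^2\in\Psi^{-2k}$), Borel summation at the end, and uniqueness by descent on the leading nonvanishing homogeneity of the difference. The paper merely organises this slightly differently --- first building projections with explicit free parameters, then spending them on orthonormality and commutation, and proving separately (Theorems~\ref{theorem identity operator} and~\ref{theorem minimal assumptions}) that $\sum_jP_j=\operatorname{Id}$ and orthonormality come for free --- so the substance of your argument matches theirs.
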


Of course, Theorem~\ref{Main results theorem 0}
follows from Theorem~\ref{Main results theorem 1}, but we listed them as separate results
for the sake of logical clarity.
Unlike Theorem~\ref{Main results theorem 0}, we do not believe that for a general operator $A$ it is possible to adjust the choice of our pseudodifferential projections $P_j$ in Theorem~\ref{Main results theorem 1} so as to satisfy the commutation conditions \eqref{commutation condition} exactly whilst maintaining exact conditions \eqref{definition of pseudodifferential projection condition 2}--\eqref{definition of system of pseudodifferential projections condition 2}.

Note that Theorem~\ref{Main results theorem 1} cannot be obtained by elementary function-analytic
arguments involving an expansion over eigenvalues and eigenfunctions of the operator $A$.
Theorem~\ref{Main results theorem 1} is to do with the structure of the principal symbol of the operator $A$,
an object which is not detected by the Spectral Theorem.  A semiclassical version of Theorem~\ref{Main results theorem 1} was obtained, with the \emph{caveat}s discussed above,  in \cite{bolte}.

The $m$ orthogonal projections $P_j$ from Theorem \ref{Main results theorem 1} effectively decompose $L^2(M)$
into $m$~infinite-dimensional subspaces
which are invariant under the action of the operator $A$.
Of course, since the construction of our projections is approximate, modulo $\Psi^{-\infty}$,
the resulting decomposition of $L^2(M)$ is also approximate, modulo $C^\infty(M)$:
\begin{equation*}
\label{decomposition of L2}
AP_jL^2(M)\subseteq P_jL^2(M)\mod C^\infty(M).
\end{equation*}

Remarkably, Theorem \ref{Main results theorem 1} will be established
by devising an explicit algorithm leading to the determination of the full symbols of the
pseudodifferential projections $P_j$'s, see sub\-sections~\ref{The algorithm}
or~\ref{The simplified algorithm}.
In particular, we will obtain the following result.

\begin{theorem}
\label{Main results theorem 2}
The explicit formula for the subprincipal symbol of the pseudodifferential projection $P_j$ reads
\begin{equation}
\label{Main results theorem 2 equation 1}
\begin{split}
(P_j)_\mathrm{sub}
&
=
\frac i2 \{P^{(j)},P^{(j)}\}-i \,P^{(j)}\{P^{(j)},P^{(j)}\}P^{(j)}
\\
&
+
\sum_{l\ne j}
\frac
{
P^{(j)}(A_\mathrm{sub}-iQ^{(j)})P^{(l)}
+
P^{(l)}(A_\mathrm{sub}+iQ^{(j)})P^{(j)}
}
{h^{(j)}-h^{(l)}}\,,
\end{split}
\end{equation}
where
\begin{equation}
\label{Main results theorem 2 equation 2}
Q^{(j)}
:=
\frac12
\bigl(
\{A_\mathrm{prin},P^{(j)}\}
-
\{P^{(j)},A_\mathrm{prin}\}
\bigr).
\end{equation}
\end{theorem}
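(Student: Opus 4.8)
The plan is to read off \eqref{Main results theorem 2 equation 1} as the first step of the constructive algorithm that will prove Theorem~\ref{Main results theorem 1}: once $(P_j)_\mathrm{prin}$ has been fixed to be $P^{(j)}$, one determines the degree $-1$ homogeneous component of the symbol of each $P_j$ by imposing the conditions \eqref{definition of pseudodifferential projection condition 2}--\eqref{definition of pseudodifferential projection condition 1bis} at the subprincipal level. The only analytic inputs are the composition and adjunction formulae for the principal and subprincipal symbols of matrix pseudodifferential operators acting on half-densities; everything else is the pointwise linear algebra of the rank~1 eigenprojections, for which the crucial fact is that, for any first-order derivative $\partial$, one has $P^{(j)}(\partial P^{(j)})P^{(j)}=0$, equivalently that $\partial P^{(j)}$ has vanishing diagonal blocks $P^{(l)}(\partial P^{(j)})P^{(l)}$.

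First I would impose the commutation condition \eqref{commutation condition} at the subprincipal level. Using $(P_j)_\mathrm{prin}=P^{(j)}$ and the composition formula, and collecting the Poisson-bracket contributions into the matrix $Q^{(j)}$ of \eqref{Main results theorem 2 equation 2}, the equation $[A,P_j]_\mathrm{sub}=0$ becomes
\begin{equation*}
[A_\mathrm{prin},(P_j)_\mathrm{sub}]=-[A_\mathrm{sub},P^{(j)}]-iQ^{(j)}.
\end{equation*}
Because $A_\mathrm{prin}$ has simple eigenvalues, the operator $\operatorname{ad}_{A_\mathrm{prin}}:=[A_\mathrm{prin},\,\cdot\,]$ annihilates the matrices commuting with $A_\mathrm{prin}$ (the diagonal matrices $\sum_lc_lP^{(l)}$) and restricts to a bijection of the off-diagonal complement, with inverse $N\mapsto\sum_{l\neq n}(h^{(l)}-h^{(n)})^{-1}P^{(l)}NP^{(n)}$. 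This determines the off-diagonal part of $(P_j)_\mathrm{sub}$; reading off the $j$-row and $j$-column blocks yields the summation term of \eqref{Main results theorem 2 equation 1}, while the blocks $P^{(l)}(P_j)_\mathrm{sub}P^{(n)}$ with $l,n\neq j$ come out as $\tfrac i2P^{(l)}\{P^{(j)},P^{(j)}\}P^{(n)}$. Here one has to check the solvability condition, namely that the right-hand side of the displayed equation is purely off-diagonal; with $q:=I-P^{(j)}$ this reduces to the identity $qQ^{(j)}q=-\tfrac12[A_\mathrm{prin},\{P^{(j)},P^{(j)}\}]$, a consequence of the vanishing of diagonal blocks of derivatives.

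The diagonal part of $(P_j)_\mathrm{sub}$, being annihilated by $\operatorname{ad}_{A_\mathrm{prin}}$, must be recovered from the remaining conditions. Imposing \eqref{definition of pseudodifferential projection condition 2} at the subprincipal level and sandwiching with $P^{(j)}$ gives $P^{(j)}(P_j)_\mathrm{sub}P^{(j)}=-\tfrac i2P^{(j)}\{P^{(j)},P^{(j)}\}P^{(j)}$; imposing $P_jP_l=0$ for $l\neq j$ and sandwiching with $P^{(l)}$ gives $P^{(l)}(P_j)_\mathrm{sub}P^{(l)}=-\tfrac i2P^{(l)}\{P^{(j)},P^{(l)}\}P^{(l)}$, which equals $\tfrac i2P^{(l)}\{P^{(j)},P^{(j)}\}P^{(l)}$ by the identity $P^{(l)}\bigl(\{P^{(j)},P^{(j)}\}+\{P^{(j)},P^{(l)}\}\bigr)P^{(l)}=0$ (again a consequence of $\sum_rP^{(r)}=I$ and the vanishing of diagonal blocks). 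Collecting all blocks, the diagonal part together with the $\{l,n\neq j\}$ off-diagonal blocks repackages exactly as $\tfrac i2\{P^{(j)},P^{(j)}\}-iP^{(j)}\{P^{(j)},P^{(j)}\}P^{(j)}$, and adding the $j$-row/$j$-column contribution gives \eqref{Main results theorem 2 equation 1}. It then remains to check that this expression is consistent with the conditions not yet used: it is Hermitian, as \eqref{definition of pseudodifferential projection condition 3} demands, since $A_\mathrm{prin}$, $A_\mathrm{sub}$, $P^{(j)}$, $Q^{(j)}$ are Hermitian and $\{P^{(j)},P^{(j)}\}$ is anti-Hermitian; and $\sum_j(P_j)_\mathrm{sub}=0$, as \eqref{definition of system of pseudodifferential projections condition 2} demands, since $\sum_jP^{(j)}=I$ and $\sum_jQ^{(j)}=0$.

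The main obstacle is the non-commutative bookkeeping. At degree $-1$ each of \eqref{definition of pseudodifferential projection condition 2}--\eqref{definition of pseudodifferential projection condition 1bis} produces a heavily over-determined linear system for the matrix $(P_j)_\mathrm{sub}$, and the content of the theorem is that these systems are simultaneously solvable and that their common solution is the stated closed form. All of this rests on a small number of pointwise identities for $P^{(j)}$ and $Q^{(j)}$ — the vanishing of diagonal blocks of $\partial P^{(j)}$, the relations $P^{(j)}\{P^{(j)},P^{(j)}\}P^{(l)}=P^{(l)}\{P^{(j)},P^{(j)}\}P^{(j)}=0$ for $l\neq j$, and the formula for $qQ^{(j)}q$ — and it is the careful derivation of these, keeping track of which blocks of the various Poisson brackets vanish, that constitutes the real work.
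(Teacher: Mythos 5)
Your proposal is correct, and the block computations I checked (the off-diagonal blocks from the commutation condition, the diagonal blocks from idempotency and orthogonality, and the identities $P^{(l)}Q^{(j)}P^{(n)}=-\tfrac12(h^{(l)}-h^{(n)})P^{(l)}\{P^{(j)},P^{(j)}\}P^{(n)}$ for $l,n\ne j$ and $P^{(l)}\bigl(\{P^{(j)},P^{(j)}\}+\{P^{(j)},P^{(l)}\}\bigr)P^{(l)}=0$) all hold. The route is, however, organised differently from the paper's. The paper proves Theorem~\ref{Main results theorem 2} by running the first iteration of the simplified algorithm of subsection~\ref{The simplified algorithm}: it chooses $P_{j,0}$ with $(P_{j,0})_\mathrm{sub}=0$, computes $R_{j,1}=-\tfrac i2\{P^{(j)},P^{(j)}\}$, then $S_{j,1}$ and $T_{j,1}$, and substitutes into the master formula \eqref{revised algorithm crucial formula}; consistency of the over-determined system is not an issue there because it was established once and for all in Sections~\ref{Pseudodifferential projections: general theory}--\ref{Commutation with an elliptic operator: revisited Dima}. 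You instead impose the four defining conditions directly at degree $-1$ and read off each block $P^{(l)}(P_j)_\mathrm{sub}P^{(n)}$ separately, inverting $\operatorname{ad}_{A_\mathrm{prin}}$ on the off-diagonal part. Your version is more self-contained and makes transparent which condition produces which term of \eqref{Main results theorem 2 equation 1}, at the price of having to verify several pointwise identities; note that, since existence and uniqueness are already guaranteed by Theorem~\ref{Main results theorem 1}, the conditions you use are necessary conditions on an object known to exist, so they pin down every block and the consistency checks in your last paragraph (including the claim $\sum_j(P_j)_\mathrm{sub}=0$, which as stated is glibber than the others, since $Q^{(j)}$ sits inside $j$-dependent sandwiches) are not logically required for the proof.
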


In formulae
\eqref{Main results theorem 2 equation 1}
and
\eqref{Main results theorem 2 equation 2}
curly brackets denote the Poisson bracket
\begin{equation}
\label{poisson brackets}
\{B,C\}:=\sum_{\alpha=1}^d(B_{x^\alpha} C_{\xi_\alpha}- B_{\xi_\alpha} C_{x^\alpha})
\end{equation}
on matrix-functions on the cotangent bundle.
Further on in the paper we will also make use
of the generalised Poisson bracket
\begin{equation*}
\label{generalised poisson brackets}
\{ B,C,D\}:=\sum_{\alpha=1}^d(B_{x^\alpha} C D_{\xi_\alpha}- B_{\xi_\alpha} C D_{x^\alpha}).
\end{equation*}
Let us emphasise that the order of terms in matrix-valued Poisson brackets matters;
for example, the usual properties $\{f,f\}=0$ and
$\{f,g\}=-\{g,f\}$
from Hamiltonian mechanics no longer hold
if the scalar functions $f$ and $g$ are replaced by matrix-functions.

Let us point out that having an explicit formula for the subprincipal symbol of projections $P_j$
is important for applications. For example, the matrix trace of $(P_j)_\mathrm{sub}$ appears in the second
Weyl coefficient of the eigenvalue counting function(s) of the operator $A$,
see \cite{CDV}. Failure to appreciate this fact led to a number of incorrect publications.
For a long time it was assumed that Safarov \cite{safarov}
did obtain the formula for the second
Weyl coefficient, fixing previous mistakes, but his formula also turned out to be wrong.
With the benefit of hindsight, Safarov's mistake can be traced back to
the incorrect assumption that $(P_j)_\mathrm{sub}=0$.
A brief account of the troubled history of the subject is given in \cite[Section~11]{CDV}.

\begin{definition}
\label{definition of sign definiteness modulo}
We say that a symmetric pseudodifferential operator $B$ is \emph{nonnegative (resp.~nonpositive) modulo} $\Psi^{-\infty}$
and write
\[
B\ge0\mod\Psi^{-\infty}\qquad(\text{resp.}\ B\le0\mod\Psi^{-\infty})
\]
if there exists a symmetric operator $C\in\Psi^{-\infty}$
such that $B+C\ge0$ (resp.~$B+C\le0$).
\end{definition}

\begin{theorem}
\label{Main results theorem 3}
We have
\begin{align}
\label{Main results theorem 3 equation 1}
&P_j^*AP_j\ge0\mod\Psi^{-\infty}\quad\text{for}\quad j=1,\dots,m^+,
\\
\label{Main results theorem 3 equation 2}
&P_j^*AP_j\le0\mod\Psi^{-\infty}\quad\text{for}\quad j=-1,\dots,-m^-.
\end{align}
\end{theorem}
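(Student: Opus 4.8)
The plan is to reduce the sign-definiteness statement to a question about the principal symbol of the operator $P_j^*AP_j$, and then to handle the lower-order discrepancy by a soft functional-analytic argument. First I would compute the principal symbol: since $P_j\in\Psi^0$ with $(P_j)_\mathrm{prin}=P^{(j)}$ and $A\in\Psi^s$ with principal symbol $A_\mathrm{prin}=\sum_l h^{(l)}P^{(l)}$, the composition rule gives $(P_j^*AP_j)_\mathrm{prin}=P^{(j)}A_\mathrm{prin}P^{(j)}=h^{(j)}P^{(j)}$, using $P^{(j)}P^{(l)}=\delta_{jl}P^{(j)}$ and the fact that the $P^{(j)}$ are orthogonal projections. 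Hence for $j=1,\dots,m^+$ the principal symbol $h^{(j)}P^{(j)}$ is a nonnegative matrix-function (rank one, eigenvalue $h^{(j)}>0$), and for $j=-1,\dots,-m^-$ it is nonpositive. This is the heart of why the theorem is true; the issue is that nonnegativity of the principal symbol only gives the sharp Gårding inequality, i.e. $P_j^*AP_j\ge -C\langle\xi\rangle^{s-1}$, not genuine nonnegativity modulo smoothing.

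The key observation that upgrades this to nonnegativity \emph{modulo} $\Psi^{-\infty}$ is that $B_j:=P_j^*AP_j$ is not just any operator with nonnegative principal symbol: it is, modulo $\Psi^{-\infty}$, a \emph{full} operator adapted to the projection $P_j$. Concretely, using the commutation relation \eqref{commutation condition}, the projection property \eqref{definition of pseudodifferential projection condition 2} and self-adjointness \eqref{definition of pseudodifferential projection condition 3}, one has $P_j^*AP_j = P_jAP_j = AP_j = P_jA$ modulo $\Psi^{-\infty}$, so that $B_j = AP_j \bmod \Psi^{-\infty}$ and $B_j$ maps into the range of $P_j$ modulo smoothing. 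Thus the plan is: decompose $A = \sum_k B_k \bmod \Psi^{-\infty}$ (this is Theorem~\ref{Main results theorem 1} together with \eqref{definition of system of pseudodifferential projections condition 2}), observe that the $B_k$ are mutually `orthogonal' in the sense $B_jB_k\in\Psi^{-\infty}$ for $j\ne k$ by \eqref{definition of system of pseudodifferential projections condition 1}, and each $B_k$ is symmetric modulo $\Psi^{-\infty}$; then restrict attention to the operator $A$ acting on the (genuine, after the $\Psi^{-\infty}$ correction of the remark following Theorem~\ref{Main results theorem 0}) invariant subspace $\mathcal{H}_j = P_jL^2(M)$. On $\mathcal{H}_j$ the operator $A$ is, modulo a smoothing (hence bounded, in fact finite-rank-approximable) perturbation, an elliptic self-adjoint operator whose principal symbol restricted to $\mathcal{H}_j$ is the scalar $h^{(j)}$, which is of one sign.

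More precisely, I would argue as follows. Fix $j$ with $1\le j\le m^+$. The operator $\widetilde A_j := B_j + (\operatorname{Id}-P_j)^*\Lambda(\operatorname{Id}-P_j)$, where $\Lambda\in\Psi^s$ is some fixed invertible positive elliptic operator (e.g. $(\operatorname{Id}-\Delta)^{s/2}\otimes I$), is elliptic self-adjoint of order $s$ with principal symbol $h^{(j)}P^{(j)} + \lambda(\operatorname{Id}-P^{(j)})$, where $\lambda>0$ is the principal symbol of $\Lambda$; choosing $\Lambda$ appropriately this principal symbol is positive-definite. Hence $\widetilde A_j$ is bounded below, and by the discreteness of its spectrum only finitely many eigenvalues are negative; adding a finite-rank (hence $\Psi^{-\infty}$) operator makes it genuinely nonnegative. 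Restricting back, and using that $B_j = P_j^*\widetilde A_j P_j \bmod \Psi^{-\infty}$, we conclude $B_j\ge 0 \bmod \Psi^{-\infty}$, which is \eqref{Main results theorem 3 equation 1}; the case $j<0$ is identical with signs reversed, using $-\Lambda$-type corrections, giving \eqref{Main results theorem 3 equation 2}.

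The main obstacle I anticipate is the bookkeeping needed to pass rigorously between `modulo $\Psi^{-\infty}$' statements at the level of operators and genuine operator inequalities: one must ensure that the smoothing remainders produced when invoking \eqref{commutation condition}, \eqref{definition of pseudodifferential projection condition 2}, \eqref{definition of pseudodifferential projection condition 3}, \eqref{definition of system of pseudodifferential projections condition 1} and \eqref{definition of system of pseudodifferential projections condition 2} are all symmetric (or can be symmetrised) and that the elliptic operator $\widetilde A_j$ one builds to extend $B_j$ off the range of $P_j$ genuinely has positive-definite principal symbol; this requires $P^{(j)}$ and $\operatorname{Id}-P^{(j)}$ to be complementary, which holds, and requires $h^{(j)}$ to be bounded away from $0$, which follows from ellipticity and homogeneity on the cosphere bundle. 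Once this is in place, the discreteness of the spectrum of an elliptic self-adjoint operator on a closed manifold, together with Definition~\ref{definition of sign definiteness modulo}, does the rest.
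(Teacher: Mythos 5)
Your proposal is correct and follows essentially the same route as the paper: reduce to an auxiliary elliptic self-adjoint operator with positive-definite principal symbol, invoke semiboundedness from below and discreteness of the spectrum to remove finitely many negative eigenvalues by a finite-rank (hence $\Psi^{-\infty}$) correction, and then sandwich between $P_j^*$ and $P_j$, noting that the remainders are symmetric smoothing operators. The only difference is the concrete choice of auxiliary operator: you set $\widetilde A_j := P_j^*AP_j + (\operatorname{Id}-P_j)^*\Lambda(\operatorname{Id}-P_j)$ with an ad hoc positive elliptic $\Lambda$ filling in the complementary subspace, whereas the paper takes $\widetilde A := A - 2\sum_{k=1}^{m^-}P_{-k}^*AP_{-k}$, a single $j$-independent operator built intrinsically from $A$ (its principal symbol is $|A|_\mathrm{prin}$). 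Both choices do the job, and the paper's has the small aesthetic advantage of requiring no extraneous operator and working uniformly for all $j=1,\dots,m^+$. Your preliminary discussion (sharp Gårding being insufficient, $P_j^*AP_j = AP_j \bmod\Psi^{-\infty}$) correctly identifies the obstacle, though it is not strictly needed once the auxiliary-operator argument is in place.
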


Note that the operators $P_j^*AP_j$ appearing in
Theorem~\ref{Main results theorem 3} are not elliptic,
\begin{equation*}
\label{Main results theorem 3 equation not elliptic}
\det(P_j^*AP_j)_\mathrm{prin}(x,\xi)=0\qquad\forall(x,\xi)\in T^*M\setminus\{0\},
\end{equation*}
therefore, proving that they are sign semidefinite modulo $\Psi^{-\infty}$ is a delicate matter.
The fact that their principal symbols are sign semidefinite does not, on its own,
imply that the operators are sign semidefinite --- it does not even imply that they are semibounded.

Let $\lambda_k$ be the eigenvalues of the operator $A$ enumerated with account of multiplicity
and $v_k$ be the corresponding orthonormal eigenfunctions. The choice of a particular enumeration is
irrelevant for our purposes.

Consider the operator \emph{modulus} of $A$ defined in accordance with
\begin{equation}
\label{definition of the modulus of the operator A}
|A|:=\sum_{k}|\lambda_k|\,\langle v_k,\,\cdot\,\rangle\,v_k\,.
\end{equation}

\begin{theorem}
\label{Main results theorem 4}
The operator $|A|$ is pseudodifferential and
\begin{equation}
\label{Main results theorem 4 equation 1}
|A|=\sum_{j=1}^{m^+}AP_j-\sum_{j=1}^{m^-}AP_{-j}\mod\Psi^{-\infty}.
\end{equation}
Furthermore, the explicit formula for
the subprincipal symbol of the operator $|A|$ reads
\begin{multline}
\label{Main results theorem 4 equation 2}
|A|_\mathrm{sub}
=
\sum_{j,k}
\frac{h^{(j)}+h^{(k)}}{|h^{(j)}|+|h^{(k)}|}
P^{(j)}A_\mathrm{sub}P^{(k)}
\\
+
\frac i2
\sum_{j,k}
\frac1{|h^{(j)}|+|h^{(k)}|}
P^{(j)}
(
\{A_\mathrm{prin},A_\mathrm{prin}\}
-
\{|A|_\mathrm{prin},|A|_\mathrm{prin}\}
)
P^{(k)}.
\end{multline}
\end{theorem}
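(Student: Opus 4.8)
The plan is to prove Theorem~\ref{Main results theorem 4} in two stages, corresponding to its two assertions. For the operator identity \eqref{Main results theorem 4 equation 1}, the natural strategy is to exploit the decomposition furnished by the orthonormal pseudodifferential basis $\{P_j\}$ commuting with $A$ (Theorems~\ref{Main results theorem 1} and~\ref{Main results theorem 3}). Set $A_+:=\sum_{j=1}^{m^+}AP_j$ and $A_-:=-\sum_{j=1}^{m^-}AP_{-j}$, and let $\widetilde{A}:=A_++A_-$. Using \eqref{commutation condition}, $P_jP_k=0$ for $j\ne k$, $P_j^2=P_j$, and $\sum_jP_j=\operatorname{Id}$ (all modulo $\Psi^{-\infty}$), I would first check that $\widetilde{A}$ is symmetric modulo $\Psi^{-\infty}$, that $\widetilde{A}^2=A^2\mod\Psi^{-\infty}$ (the cross terms vanish because of orthogonality of the projections), and that $\widetilde{A}=A_+-(-A_-)$ with $A_+\ge0$ and $-A_-\ge0$ modulo $\Psi^{-\infty}$ by Theorem~\ref{Main results theorem 3} (note $AP_j=P_j^*AP_j\mod\Psi^{-\infty}$ since $P_j$ commutes with $A$ and is a projection). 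Thus $\widetilde A$ is a symmetric operator, nonnegative modulo $\Psi^{-\infty}$, whose square agrees with $A^2$ modulo $\Psi^{-\infty}$. It then remains to argue that such an operator must equal $|A|$ modulo $\Psi^{-\infty}$: since $\widetilde A-|A|$ and $\widetilde A+|A|$ are both pseudodifferential, and $(\widetilde A-|A|)(\widetilde A+|A|)=\widetilde A^2-|A|^2+[\,|A|,\widetilde A\,]$; one shows $[\,|A|,\widetilde A\,]\in\Psi^{-\infty}$ because $|A|$ is, by the Spectral Theorem, a function of $A$ and $\widetilde A$ commutes with $A$ modulo $\Psi^{-\infty}$, hence the product of the two factors is smoothing, and nonnegativity of both $\widetilde A$ and $|A|$ modulo $\Psi^{-\infty}$ forces the difference itself to be smoothing (a factorisation/functional-calculus argument of the type: if $S\ge0$, $S'\ge0$ modulo $\Psi^{-\infty}$ and $SS'\in\Psi^{-\infty}$ with $S-S'$ pseudodifferential, then $S-S'\in\Psi^{-\infty}$).

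For the second assertion, the formula \eqref{Main results theorem 4 equation 2} for $|A|_\mathrm{sub}$, I would work purely at the level of symbols, starting from \eqref{Main results theorem 4 equation 1}. Write $|A|=\sum_j \epsilon_j AP_j\mod\Psi^{-\infty}$ with $\epsilon_j=\operatorname{sgn} j$, and compute the subprincipal symbol of each $AP_j$ using the standard composition formula for subprincipal symbols of products of pseudodifferential operators, namely $(BC)_\mathrm{sub}=B_\mathrm{sub}C_\mathrm{prin}+B_\mathrm{prin}C_\mathrm{sub}-\tfrac i2\{B_\mathrm{prin},C_\mathrm{prin}\}+\tfrac i2\{C_\mathrm{prin},B_\mathrm{prin}\}$ (the matrix-valued version, with the antisymmetrised Poisson bracket correction—one has to be careful with operator ordering since the symbols are matrices). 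Here $B=A$, $C=P_j$, so $(AP_j)_\mathrm{sub}=A_\mathrm{sub}P^{(j)}+A_\mathrm{prin}(P_j)_\mathrm{sub}-\tfrac i2\{A_\mathrm{prin},P^{(j)}\}+\tfrac i2\{P^{(j)},A_\mathrm{prin}\}$. Then I would substitute the explicit expression for $(P_j)_\mathrm{sub}$ from Theorem~\ref{Main results theorem 2}, sum over $j$ with weights $\epsilon_j$, and use $A_\mathrm{prin}P^{(j)}=h^{(j)}P^{(j)}$, $\sum_jP^{(j)}=I$, $|A|_\mathrm{prin}=\sum_j|h^{(j)}|P^{(j)}$, and $\{A_\mathrm{prin},A_\mathrm{prin}\}=\sum_{j,k}\{h^{(j)}P^{(j)},h^{(k)}P^{(k)}\}$, collecting everything into the form $\sum_{j,k}P^{(j)}(\cdots)P^{(k)}$ by inserting $I=\sum_k P^{(k)}$ where appropriate. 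The coefficients $\tfrac{h^{(j)}+h^{(k)}}{|h^{(j)}|+|h^{(k)}|}$ and $\tfrac1{|h^{(j)}|+|h^{(k)}|}$ should emerge after simplification using $\epsilon_j h^{(j)}=|h^{(j)}|$ and the identity $\tfrac{\epsilon_j h^{(j)}-\epsilon_l h^{(l)}}{h^{(j)}-h^{(l)}}\cdot(\text{something})$ combined with the $h^{(j)}-h^{(l)}$ denominators from Theorem~\ref{Main results theorem 2}; the apparent singularities at $h^{(j)}=h^{(l)}$ cancel because $\epsilon_j=\epsilon_l$ there.

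An alternative, and probably cleaner, route to \eqref{Main results theorem 4 equation 2} is to observe that $|A|$ commutes with $A$ and with all the $P_j$ modulo $\Psi^{-\infty}$, is symmetric, elliptic, self-adjoint, positive, with principal symbol $|A|_\mathrm{prin}=\sum_j|h^{(j)}|P^{(j)}$ having simple eigenvalues, and whose associated pseudodifferential projections are exactly the $P_j$; one may then try to apply the subprincipal-symbol machinery of Theorem~\ref{Main results theorem 2} in reverse, or—more directly—derive a general formula for $B_\mathrm{sub}$ when $B=\sum_j f(h^{(j)})P_j\mod\Psi^{-\infty}$ for a function $f$, and specialise to $f(t)=|t|$. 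I would likely present the direct computation as the main argument and mention this structural viewpoint as a sanity check.

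The main obstacle I anticipate is the bookkeeping in the symbol computation: matrix-valued Poisson brackets do not anticommute, the subprincipal-symbol composition formula has nontrivial ordering, and one must carefully insert resolutions of the identity $\sum_kP^{(k)}$ and repeatedly use $A_\mathrm{prin}P^{(j)}=h^{(j)}P^{(j)}$ (and its conjugate) to reduce everything to the canonical double-sum form; verifying that the singular denominators cancel and that the weights collapse to exactly $\tfrac{h^{(j)}+h^{(k)}}{|h^{(j)}|+|h^{(k)}|}$ and $\tfrac1{|h^{(j)}|+|h^{(k)}|}$ is where errors are most likely to creep in. A secondary delicate point is the functional-calculus argument in the first part—rigorously justifying that a symmetric, nonnegative-modulo-$\Psi^{-\infty}$ pseudodifferential operator with the right square and commutation properties coincides with $|A|$ modulo $\Psi^{-\infty}$—which requires combining ellipticity of $A$ (hence of $\widetilde A$), the Spectral Theorem, and Definition~\ref{definition of sign definiteness modulo} with some care, since sign-definiteness modulo $\Psi^{-\infty}$ is a genuinely subtle notion (as the remark after Theorem~\ref{Main results theorem 3} stresses).
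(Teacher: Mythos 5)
Your overall architecture is close to the paper's: the paper also compares $\widetilde A:=\sum_{j=1}^{m^+}AP_j-\sum_{j=1}^{m^-}AP_{-j}$ with $|A|$ by using $\widetilde A^2=A^2\mod\Psi^{-\infty}$ together with the fact that both operators have principal symbol $\sum_j|h^{(j)}|P^{(j)}$, and it obtains \eqref{Main results theorem 4 equation 2} exactly as you propose, by combining \eqref{Main results theorem 4 equation 1} with Theorem~\ref{Main results theorem 2}. However, there are three concrete problems with your plan. First, the abstract lemma you invoke to close the first part --- ``if $S\ge0$, $S'\ge0$ modulo $\Psi^{-\infty}$ and $SS'\in\Psi^{-\infty}$ with $S-S'$ pseudodifferential, then $S-S'\in\Psi^{-\infty}$'' --- is false: take $S=P_1$, $S'=P_2$ two of the orthogonal projections of the paper; then $P_1P_2\in\Psi^{-\infty}$ but $P_1-P_2\notin\Psi^{-\infty}$. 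What actually closes the argument is not operator nonnegativity (Theorem~\ref{Main results theorem 3} is not needed here) but ellipticity: from $(\widetilde A-|A|)(\widetilde A+|A|)\in\Psi^{-\infty}$ you conclude by composing with a parametrix of $\widetilde A+|A|$, whose principal symbol $2\sum_j|h^{(j)}|P^{(j)}$ is invertible. The paper packages the same point more economically as a Sylvester-type anticommutator equation $|A|_\mathrm{prin}B_\mathrm{prin}+B_\mathrm{prin}|A|_\mathrm{prin}=0$ at each order, sandwiched between $P^{(n)}$ and $P^{(l)}$ to give $(|h^{(n)}|+|h^{(l)}|)P^{(n)}B_\mathrm{prin}P^{(l)}=0$; this also sidesteps your auxiliary claim that $[\,|A|,\widetilde A\,]\in\Psi^{-\infty}$, which is true but itself requires an argument (commuting with $A$ modulo $\Psi^{-\infty}$ does not formally give commutation with an arbitrary function of $A$).

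Second, you take for granted that $|A|$ is pseudodifferential, but this is part of the assertion of the theorem; the paper proves it via Seeley's complex powers applied to $|A|=\sqrt{A^2+\Pi_0}-\Pi_0$, where $\Pi_0$ is the finite-rank eigenprojection onto $\ker A$, inserted because $A^2$ need not be strictly positive. Third, and most consequential for \eqref{Main results theorem 4 equation 2}, your composition formula for subprincipal symbols is wrong: the correct matrix-valued formula, \eqref{subprincipal symbol composition}, is $[BC]_\mathrm{sub}=B_\mathrm{prin}C_\mathrm{sub}+B_\mathrm{sub}C_\mathrm{prin}+\frac i2\{B_\mathrm{prin},C_\mathrm{prin}\}$ with a single \emph{ordered} Poisson bracket, not the antisymmetrised combination $-\frac i2\{B_\mathrm{prin},C_\mathrm{prin}\}+\frac i2\{C_\mathrm{prin},B_\mathrm{prin}\}$ you wrote. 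The two differ precisely in the matrix-valued setting (for instance your version gives $[B^2]_\mathrm{sub}$ no bracket term, whereas terms like $\{P^{(j)},P^{(j)}\}$ and $\{A_\mathrm{prin},A_\mathrm{prin}\}$ are nonzero and appear explicitly in Theorem~\ref{Main results theorem 2} and in \eqref{Main results theorem 4 equation 2}); carrying your formula through the computation would not reproduce the stated coefficients.
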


Let $\theta:\mathbb{R}\to\mathbb{R}$,
\begin{equation*}
\label{Heaviside function}
\theta(z):=
\begin{cases}
0&\text{if}\quad z\le0,
\\
1&\text{if}\quad z>0
\end{cases}
\end{equation*}
be the Heaviside function.
Consider the operator
\begin{equation}
\label{definition of the Heaviside function of the operator A}
\theta(A):=\sum_{k:\ \lambda_k>0}\langle v_k,\,\cdot\,\rangle\,v_k\,.
\end{equation}

\begin{theorem}
\label{Main results theorem 5}
The operator $\theta(A)$ is pseudodifferential and
\begin{equation}
\label{Main results theorem 5 equation 1}
\theta(A)=\sum_{j=1}^{m^+}P_j\mod\Psi^{-\infty}.
\end{equation}
\end{theorem}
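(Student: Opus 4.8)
The plan is to express $\theta(A)$ as a contour integral of the resolvent of $A$ and to show that this integral differs from $\sum_{j=1}^{m^+}P_j$ only by a smoothing operator. The starting point is the observation that, by definition \eqref{definition of the Heaviside function of the operator A}, $\theta(A)$ is the spectral projection of $A$ onto the closed positive half-line. Since the spectrum of $A$ is discrete and bounded away from $0$ (by ellipticity, $|\lambda_k|\to\infty$ and there are only finitely many eigenvalues in any bounded interval, and $0$ is not an accumulation point of the spectrum on either side independently of where isolated eigenvalues sit), we can write $\theta(A)=\frac{1}{2\pi i}\oint_{\Gamma}(\lambda\operatorname{Id}-A)^{-1}\,d\lambda$ for a suitable contour $\Gamma$ separating the positive and negative parts of the spectrum. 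The key structural input, already available from the preceding theorems, is that $A=\sum_{j}AP_j\bmod\Psi^{-\infty}$ and that the $P_j$ are mutually orthogonal projections commuting with $A$ modulo $\Psi^{-\infty}$; hence modulo smoothing $A$ block-diagonalizes as $\bigoplus_j P_j^*AP_j$, and by Theorem~\ref{Main results theorem 3} the $j$-th block is sign definite according to $\operatorname{sgn} j$.

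First I would make the block decomposition precise. Set $A_j:=P_j^*AP_j$; then $\sum_j A_j = A\bmod\Psi^{-\infty}$, the $A_j$ are pairwise "orthogonal" in the sense $A_jP_k=0\bmod\Psi^{-\infty}$ for $k\ne j$, and $A_j$ acts (modulo smoothing) as an invertible operator on the range of $P_j$ and as zero elsewhere. For $j>0$ the operator $\widetilde A_+:=\sum_{j=1}^{m^+}A_j$ is, modulo $\Psi^{-\infty}$, nonnegative with the additional property that it is bounded below by a positive elliptic operator on $\sum_{j=1}^{m^+}P_jL^2(M)$ — this is where Theorem~\ref{Main results theorem 3} together with ellipticity of $A$ is used, to guarantee that the positive block has a spectral gap at $0$. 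Similarly $\widetilde A_-:=\sum_{j=1}^{m^-}A_{-j}$ is nonpositive with a gap. Then I would argue that $\theta(A)$ and $\theta\!\left(\widetilde A_++\widetilde A_-\right)$ differ by a smoothing operator: replacing $A$ by an operator agreeing with it modulo $\Psi^{-\infty}$ changes eigenvalues by a rapidly decaying amount and eigenprojections by smoothing operators away from spectral accumulation, and since the relevant projection is onto $(0,\infty)$ and there is no accumulation at $0$, the difference of the two Heaviside functions is in $\Psi^{-\infty}$. (This comparison step is most cleanly done by writing both as contour integrals of resolvents around the same contour and using that the resolvents differ by a smoothing, parameter-dependent family with uniform-in-$\lambda$ decay on the contour.)

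Next, for the block operator $\widetilde A_++\widetilde A_-$ the Heaviside function is transparent: since $\widetilde A_+\ge c>0$ modulo $\Psi^{-\infty}$ on its range and $\widetilde A_-$ is nonpositive modulo $\Psi^{-\infty}$, and the two ranges are complementary modulo smoothing with $\sum_j P_j=\operatorname{Id}\bmod\Psi^{-\infty}$, the spectral projection onto $(0,\infty)$ is exactly the projection onto the range of $\widetilde A_+$, namely $\sum_{j=1}^{m^+}P_j$, up to $\Psi^{-\infty}$. Concretely, choosing $\Gamma$ to be a contour in the right half-plane encircling the positive spectrum, $\frac{1}{2\pi i}\oint_\Gamma(\lambda\operatorname{Id}-\widetilde A_+-\widetilde A_-)^{-1}d\lambda = \frac{1}{2\pi i}\oint_\Gamma\Bigl(\sum_{j=1}^{m^+}(\lambda P_j - A_j)^{-1}_{\mathrm{on}\ P_jL^2} + \sum_{j=1}^{m^-}(\lambda P_{-j})^{-1}_{\mathrm{on}\ P_{-j}L^2}\Bigr)d\lambda\bmod\Psi^{-\infty}$; the $m^-$ terms contribute nothing since $\Gamma$ does not enclose $0$ (the only "spectral point" of the $-j$-blocks as seen through $\lambda P_{-j}$), while each $j$-th term with $j>0$ integrates to $P_j$ because $\Gamma$ encircles the spectrum of the positive block. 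Summing gives \eqref{Main results theorem 5 equation 1}.

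The main obstacle I expect is the second step: justifying rigorously that $\theta(A)=\theta(\widetilde A)\bmod\Psi^{-\infty}$ where $\widetilde A=\sum_j P_j^*AP_j$, i.e.\ that passing to the block-diagonalized operator and throwing away the $\Psi^{-\infty}$ discrepancy does not change the Heaviside function beyond a smoothing operator. This is delicate precisely because, as the authors stress after Theorem~\ref{Main results theorem 3}, sign semidefiniteness of principal symbols does not by itself yield semiboundedness of the operators; one genuinely needs the operator-level statement of Theorem~\ref{Main results theorem 3}, which provides the spectral gap at $0$ for each block, to control the resolvent $(\lambda\operatorname{Id}-A)^{-1}$ uniformly for $\lambda$ on a contour passing through a neighbourhood of $0$. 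Once that uniform resolvent bound is in hand, the standard argument that a smoothing perturbation produces a smoothing change in the spectral projection (away from spectral accumulation) closes the gap. A secondary technical point is to confirm that $\theta(A)$, defined by the bare spectral sum \eqref{definition of the Heaviside function of the operator A}, really does coincide with the contour integral — this follows from discreteness of the spectrum plus the a priori knowledge, from Theorem~\ref{Main results theorem 4} applied to $\tfrac12(|A|+A)=\sum_{j=1}^{m^+}AP_j\bmod\Psi^{-\infty}$, that $\theta(A)$ is bounded (indeed in $\Psi^0$), so that the series and the contour integral define the same operator.
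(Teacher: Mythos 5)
Your proposal takes a genuinely different (and far heavier) route than the paper, and it has a real gap beyond the one you flag yourself.

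The paper's proof is a two-line algebraic argument: writing $B$ for the pseudoinverse of $A$ (which, by ellipticity, is in $\Psi^{-s}$ and is a parametrix since $AB=BA=\operatorname{Id}-\Pi_0$ with $\Pi_0\in\Psi^{-\infty}$), one has the exact operator identity
\[
\theta(A)=\tfrac12\bigl(\operatorname{Id}+B|A|-\Pi_0\bigr),
\]
and then Theorem~\ref{Main results theorem 4} together with $\sum_j P_j=\operatorname{Id}\bmod\Psi^{-\infty}$ gives \eqref{Main results theorem 5 equation 1} immediately. No contours, no perturbation theory, no resolvent estimates. This is precisely the point the authors emphasise throughout: the whole construction is designed to sidestep parameter-dependent calculus and the Riesz-projection/Seeley machinery. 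Curiously, your closing parenthetical remark, where you note that $\tfrac12(|A|+A)=\theta(A)A$ and invoke Theorem~\ref{Main results theorem 4}, is exactly the right idea; you just need to compose with the pseudoinverse $B$ and you reproduce the paper's argument. Everything that precedes that remark is a detour.

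The gap you flag (that $\theta(A)-\theta(\widetilde A)\in\Psi^{-\infty}$ when $A-\widetilde A\in\Psi^{-\infty}$) is real and nontrivial, but there is a more basic obstruction earlier in your argument: the positive spectrum of $A$ is unbounded, so there is no bounded closed contour $\Gamma$ ``encircling the positive spectrum,'' and the expression $\frac{1}{2\pi i}\oint_\Gamma(\lambda\operatorname{Id}-A)^{-1}\,d\lambda$ as you write it does not define $\theta(A)$. Making the Riesz-projection approach rigorous for an operator whose spectrum accumulates at $\pm\infty$ requires an unbounded contour together with a regularisation scheme (Seeley's complex powers, or equivalently parameter-dependent symbol classes with decay estimates in $\lambda$), which is substantially more machinery than the statement warrants and is precisely what the authors state they are avoiding. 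If you want to pursue this route you must either (a) work with $A|A|^{-1}=\operatorname{sgn}(A)$ and run Seeley's calculus on $|A|^{-1}$, or (b) replace the contour integral of the resolvent of $A$ by a contour integral for a \emph{bounded} operator such as $B|A|$ or the operator $\tfrac12\sum_j b_j(P_j+P_j^*)$ discussed after Theorem~\ref{Main results theorem 0}, whose essential spectrum is finite; either way you are driven back towards the paper's argument. Also note a small imprecision: $\theta(A)$ projects onto the \emph{open} positive half-line, not the closed one; this does not affect anything here since the spectrum is discrete, but the contour must separate $0$ from the positive eigenvalues, not pass through $0$.

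Trade-offs: your route, once repaired, would give a proof independent of Theorem~\ref{Main results theorem 4} and somewhat closer to the Bolte--Glaser philosophy, but at the cost of the full parameter-dependent calculus. The paper's route piggybacks entirely on the already-established $|A|$ formula and the parametrix, and is both shorter and in the spirit of the paper's stated aim to bypass that calculus.
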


Of course, Theorem~\ref{Main results theorem 2}
immediately gives us an explicit formula for $[\theta(A)]_\mathrm{sub}$.

\begin{remark}
We should point out that the setting of our paper is not the most general setting in which one can construct pseudodifferential projections satisfying conditions \eqref{definition of pseudodifferential projection condition 2}--\eqref{definition of system of pseudodifferential projections condition 2},  \eqref{commutation condition}--\eqref{definition of pseudodifferential projection condition 1bis}.  Note, for example, that the ellipticity of $A$ or the fact that $A$ is of positive order are not really needed in the proof of Theorem~\ref{Main results theorem 1} or in the construction algorithm leading up to Theorem~\ref{Main results theorem 2} (but they are needed in Theorem~\ref{Main results theorem 3}).  Furthermore,  our algorithm can be extended without much effort to operators acting on vector bundles --- and, in particular, to operators acting on differential forms (e.g., the operator curl).  Different techniques may allow one to generalise the results even further. The reason why we refrain from carrying out such generalisations in the current paper is twofold: (i) we aim to write a paper accessible to a wide audience, not necessarily limited to (microlocal) analysts,  and (ii) we are motivated by applications in spectral theory (see Theorems~\ref{Main results theorem 2}, \ref{Main results theorem 3} and \cite{part2,diagonalisation}).
We will address certain generalisations elsewhere.
\end{remark}

The paper is structured as follows.

In Section~\ref{Pseudodifferential projections: general theory} we develop the general theory of pseudodifferential projections: in subsection~\ref{Construction of a single pseudodifferential projection} we construct a single pseudodifferential projection, in subsection~\ref{Construction of a basis of pseudodifferential projections} we construct an orthonormal basis of pseudodifferential projections and in subsection~\ref{Commutation with an elliptic operator} we show that the latter can be chosen in such a way that it commutes with our elliptic operator $A$ and that this determines the projections uniquely. The results of Section~\ref{Pseudodifferential projections: general theory} are summarised in subsection~\ref{The algorithm} in the form of an algorithm for the construction of the full symbol of pseudodifferential projections.

In Section~\ref{Commutation with an elliptic operator: revisited Dima} we show that a set of $m$ pseudodifferential projections commuting with an elliptic operator $A\in \Psi^s$, $s>0$, are automatically orthonormal and sum to the identity operator, modulo $\Psi^{-\infty}$. This leads to a simplified algorithm for the construction of their full symbols, presented in subsection~\ref{The simplified algorithm}.

In Section~\ref{Subprincipal symbol of pseudodifferential projections} we carry out the first step of our algorithm and obtain a closed explicit formula for the subprincipal symbol of pseudodifferential projections.

Section~\ref{A positivity result} is concerned with the proof of Theorem~\ref{Main results theorem 3}, which consists in a rigorous formulation of the fact that one can use pseudodifferential projections to construct $m$ distinct sign definite operators (modulo $\Psi^{-\infty}$) out of $A$.

Results from Sections \ref{Commutation with an elliptic operator: revisited Dima} and~\ref{Subprincipal symbol of pseudodifferential projections} are employed in Section~\ref{Modulus and Heaviside function of an elliptic system} to represent modulus and Heaviside function of $A$ in terms of pseudodifferential projections. This yields a simpler -- compared to those available in the literature -- algorithm for the calculation of the full symbols of $|A|$ and $\theta(A)$, as well as explicit formulae for $|A|_\mathrm{sub}$ and $[\theta(A)]_\mathrm{sub}$.

Lastly, in Section~\ref{Applications} we discuss three applications of our results: to the massless Dirac operator on a closed 3-manifold, to the operator of linear elasticity (Lam\'e operator) on a 2-torus and to the Dirichlet-to-Neumann map of linear elasticity in 3D. 

\section{Pseudodifferential projections: general theory}
\label{Pseudodifferential projections: general theory}

The goal of this section is to develop a comprehensive and self-contained theory of pseudo\-differential projections in $L^2(M)$\footnote{Recall that in this paper $L^2(M)$ denotes the space of $m$-columns of square integrable complex-valued half-densities.}, including an explicit construction of their full symbols. This analysis, which we believe to be of interest in its own right, will answer Questions~1, 2 and 3 from Section~\ref{Statement of the problem}, and lay rigorous foundations for the use of pseudodifferential projections in the study of spectral asymptotics of elliptic systems
carried out in our companion paper \cite{part2}.

\subsection{Construction of a single pseudodifferential projection}
\label{Construction of a single pseudodifferential projection}

In this subsection we prove the existence and establish the general structure
of an operator $P_j\in\Psi^0$ satisfying conditions
\eqref{definition of pseudodifferential projection condition 2}--\eqref{definition of pseudodifferential projection condition 3}.
We do this by constructing a sequence
$P_{j,k}\in\Psi^0$, $k=0,1,2,\dots$,
of pseudodifferential operators such that
\begin{equation}
\label{Construction of a single pseudodifferential projection equation 1}
P_{j,k+1}-P_{j,k}\in\Psi^{-k-1},
\end{equation}
\begin{equation}
\label{Construction of a single pseudodifferential projection equation 2}
P_{j,k}^2=P_{j,k}\mod\Psi^{-k-1},
\end{equation}
\begin{equation}
\label{Construction of a single pseudodifferential projection equation 3}
P_{j,k}^*=P_{j,k}\mod\Psi^{-\infty}
\end{equation}
for $k=0,1,2,\dots$.
For $P_{j,0}$ we choose an arbitrary pesudodifferential operator satisfying 
\eqref{Construction of a single pseudodifferential projection equation 2} and \eqref{Construction of a single pseudodifferential projection equation 3}, and construct
subsequent $P_{j,k}$ by solving
\eqref{Construction of a single pseudodifferential projection equation 1}--\eqref{Construction of a single pseudodifferential projection equation 3} recursively.

To this end, we seek $P_{j,k}$, $k=1,2,\dots$, in the form
\begin{equation*}
\label{Construction of a single pseudodifferential projection equation 4}
P_{j,k}=P_{j,k-1}+X_{j,k},
\end{equation*}
where $X_{j,k}\in\Psi^{-k}$ is an unknown pseudodifferential operator
such that
\begin{equation}
\label{Construction of a single pseudodifferential projection equation 4bis}
X_{j,k}=X_{j,k}^* \mod \Psi^{-\infty}.
\end{equation}

Then condition 
\eqref{Construction of a single pseudodifferential projection equation 1}
is automatically satisfied,
whereas solving
\eqref{Construction of a single pseudodifferential projection equation 2}
and
\eqref{Construction of a single pseudodifferential projection equation 3}
reduces to solving
\begin{equation*}
\label{Construction of a single pseudodifferential projection equation 5}
[(P_{j,k-1}+X_{j,k})^2-P_{j,k-1}-X_{j,k}]_\mathrm{prin,k}=0,
\end{equation*}
\begin{equation*}
\label{Construction of a single pseudodifferential projection equation 6}
[(P_{j,k-1}+X_{j,k})^*-P_{j,k-1}-X_{j,k}]_\mathrm{prin,k}=0,
\end{equation*}
which gives us a system of equations for the unknown $(X_{j,k})_\mathrm{prin}\,$.
This system of equations reads
\begin{equation}
\label{Construction of a single pseudodifferential projection equation 7}
P^{(j)}(X_{j,k})_\mathrm{prin}
+
(X_{j,k})_\mathrm{prin}P^{(j)}
-
(X_{j,k})_\mathrm{prin}
=
R_{j,k}\,,
\end{equation}
\begin{equation}
\label{Construction of a single pseudodifferential projection equation 8}
(X_{j,k})_\mathrm{prin}^*
-
(X_{j,k})_\mathrm{prin}
=
0\,,
\end{equation}
where
\begin{equation}
\label{Construction of a single pseudodifferential projection equation 9}
R_{j,k}
:=
-
[(P_{j,k-1})^2-P_{j,k-1}]_\mathrm{prin,k}\,.
\end{equation}
In fact, once one has determined $(X_{j,k})_\mathrm{prin}$ satisfying \eqref{Construction of a single pseudodifferential projection equation 7} and \eqref{Construction of a single pseudodifferential projection equation 8}, it is always possible to choose lower order terms in the symbol of $X_{j,k}$ so as to satisfy \eqref{Construction of a single pseudodifferential projection equation 4bis}.

%Here we might be abusing notation, will have to give explanations,
%especially once we start doing things like in
%\eqref{Construction of a single pseudodifferential projection equation 10}.
%Might even have to introduce notation of the sort $]_{\mathrm{prin},l}$.

\begin{lemma}
\label{Construction of a single pseudodifferential projection lemma 1}
The general solution of the system
\eqref{Construction of a single pseudodifferential projection equation 7},
\eqref{Construction of a single pseudodifferential projection equation 8}
reads
\begin{equation}
\label{Construction of a single pseudodifferential projection equation 12}
(X_{j,k})_\mathrm{prin}
=-R_{j,k}+P^{(j)}R_{j,k}+R_{j,k}P^{(j)}+Y_{j,k}+Y_{j,k}^*,
%=P^{(j)}RP^{(j)}-(I-P^{(j)})R(I-P^{(j)})
\end{equation}
where $Y_{j,k}$ is an arbitrary matrix-function positively homogeneous in momentum of degree~$-k$ such that
\begin{equation}
\label{Construction of a single pseudodifferential projection equation 13}
Y_{j,k}=P^{(j)}Y_{j,k}(I-P^{(j)}).
\end{equation}
\end{lemma}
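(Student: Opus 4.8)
The plan is to treat the two equations \eqref{Construction of a single pseudodifferential projection equation 7} and \eqref{Construction of a single pseudodifferential projection equation 8} as a linear algebraic system in the matrix-function unknown $X:=(X_{j,k})_\mathrm{prin}$, and to exploit the block structure induced by the rank-1 eigenprojection $P^{(j)}$. Write $P:=P^{(j)}$ and $P^\perp:=I-P$; since $P$ is an orthogonal projection ($P^2=P$, $P^*=P$), every matrix $X$ decomposes uniquely as $X=PXP+PXP^\perp+P^\perp XP+P^\perp XP^\perp$. The operator on the left-hand side of \eqref{Construction of a single pseudodifferential projection equation 7}, namely $\mathcal L(X):=PX+XP-X$, acts diagonally with respect to this decomposition: on the $PXP$ block it multiplies by $1+1-1=1$, on the two mixed blocks it multiplies by $1+0-1=0$, and on the $P^\perp XP^\perp$ block it multiplies by $0+0-1=-1$. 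Hence $\mathcal L$ is invertible precisely on the complement of its kernel, which is the sum of the two mixed blocks $PXP^\perp\oplus P^\perp XP$.

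The first step is therefore to solve \eqref{Construction of a single pseudodifferential projection equation 7} alone. A particular solution is obtained by inverting $\mathcal L$ on the range: one checks directly that $X_0:=-R_{j,k}+PR_{j,k}+R_{j,k}P$ satisfies $\mathcal L(X_0)=R_{j,k}$ — this is a short computation using $P^2=P$ and relies on the compatibility fact, which should be recorded, that the mixed blocks $PR_{j,k}P^\perp$ and $P^\perp R_{j,k}P$ of the right-hand side vanish. The latter follows because $R_{j,k}=-[(P_{j,k-1})^2-P_{j,k-1}]_\mathrm{prin,k}$ and one can differentiate the relation $P_{j,k-1}^2=P_{j,k-1}\bmod\Psi^{-k}$ together with $(P_{j,k-1})_\mathrm{prin}=P$ to see that $PR_{j,k}+R_{j,k}P=R_{j,k}$ at the relevant order, i.e. $R_{j,k}$ lies in the range of $\mathcal L$; I would prove this small claim as a preliminary (it is the analogue of the standard fact that the "error term" in a projection construction automatically lies in the solvable subspace). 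The general solution of \eqref{Construction of a single pseudodifferential projection equation 7} is then $X=X_0+K$ where $K$ ranges over $\ker\mathcal L=PXP^\perp\oplus P^\perp XP$, i.e. $K=PK P^\perp+P^\perp KP$ with the two summands arbitrary.

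The second step imposes the self-adjointness equation \eqref{Construction of a single pseudodifferential projection equation 8}, $X^*=X$. Since $R_{j,k}$ is Hermitian (because $(P_{j,k-1})^2-P_{j,k-1}$ is Hermitian modulo $\Psi^{-\infty}$ by \eqref{Construction of a single pseudodifferential projection equation 3}, so its principal symbol at order $k$ is a Hermitian matrix) and $P$ is Hermitian, the particular solution $X_0=-R_{j,k}+PR_{j,k}+R_{j,k}P$ is already Hermitian. Thus $X^*=X$ reduces to $K^*=K$, i.e. to requiring that the free part $K$ be Hermitian. Writing $K=Y+Z$ with $Y:=PKP^\perp$ and $Z:=P^\perp KP$, Hermiticity forces $Z=Y^*$ (taking adjoints swaps the two blocks), and $Y=PY(I-P)$ by construction; conversely any $Y$ of the form $Y=PY(I-P)$ produces via $K=Y+Y^*$ a Hermitian element of $\ker\mathcal L$. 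Substituting $K=Y_{j,k}+Y_{j,k}^*$ and adding $X_0$ yields exactly \eqref{Construction of a single pseudodifferential projection equation 12} with the constraint \eqref{Construction of a single pseudodifferential projection equation 13}, and the homogeneity degree $-k$ is inherited from that of $(X_{j,k})_\mathrm{prin}$.

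The main obstacle, and the only genuinely non-formal point, is the solvability/compatibility claim of the first step: verifying that $R_{j,k}$ has vanishing mixed blocks $PR_{j,k}(I-P)=0=(I-P)R_{j,k}P$, equivalently $\mathcal L\mathcal L^{-1}R_{j,k}=R_{j,k}$, so that the proposed $X_0$ is genuinely a solution rather than merely a best approximation. Everything else is bookkeeping with the block decomposition. I would isolate this compatibility property as a small lemma about the error term $(P_{j,k-1}^2-P_{j,k-1})_{\mathrm{prin},k}$, proved by applying $P\,\cdot\,(I-P)$ to an identity obtained by multiplying $P_{j,k-1}^2-P_{j,k-1}\in\Psi^{-k}$ on left and right by $P_{j,k-1}$ and comparing principal symbols at order $k$, using $(P_{j,k-1})_\mathrm{prin}=P^{(j)}$.
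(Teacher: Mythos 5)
Your approach matches the paper's: block-decompose relative to $P^{(j)}$, identify the solvable and free parts of the linear map $\mathcal L(X):=P^{(j)}X+XP^{(j)}-X$, and reduce the whole problem to the compatibility condition that the mixed blocks of $R_{j,k}$ vanish. The paper's verification of that fact, namely
\[
P^{(j)}R_{j,k}(I-P^{(j)})
=-\bigl[P_{j,k-1}\bigl((P_{j,k-1})^2-P_{j,k-1}\bigr)(\operatorname{Id}-P_{j,k-1})\bigr]_{\mathrm{prin},k}
=\bigl[((P_{j,k-1})^2-P_{j,k-1})^2\bigr]_{\mathrm{prin},k}=0,
\]
the last step holding because $(P_{j,k-1})^2-P_{j,k-1}\in\Psi^{-k}$ makes its square lie in $\Psi^{-2k}$, is exactly the small lemma you say you would isolate, and your sketch of it is the same computation. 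The rest (Hermiticity of $R_{j,k}$ from the inductive self-adjointness, Hermiticity of the particular solution $X_0$, $\ker\mathcal L$ being the span of the mixed blocks so that the free parameter is a $Y_{j,k}=P^{(j)}Y_{j,k}(I-P^{(j)})$ plus its adjoint) agrees with the paper, merely phrased in the language of the spectral decomposition of $\mathcal L$.

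One slip worth fixing: in the middle of the second paragraph you assert that the compatibility condition can be restated as ``$P^{(j)}R_{j,k}+R_{j,k}P^{(j)}=R_{j,k}$ at the relevant order, i.e.\ $R_{j,k}$ lies in the range of $\mathcal L$.'' That identity is wrong. By your own eigenvalue computation, $\mathcal L(R_{j,k})=P^{(j)}R_{j,k}P^{(j)}-(I-P^{(j)})R_{j,k}(I-P^{(j)})$, so $P^{(j)}R_{j,k}+R_{j,k}P^{(j)}=R_{j,k}$ says $R_{j,k}\in\ker\mathcal L$, the opposite of being in the range; and combined with the vanishing of the mixed blocks (which you do state correctly just before and just after this sentence) it would force $R_{j,k}=0$, which is not what you want to prove. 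The correct reformulation of ``$R_{j,k}\in\operatorname{range}\mathcal L$'' is $R_{j,k}=P^{(j)}R_{j,k}P^{(j)}+(I-P^{(j)})R_{j,k}(I-P^{(j)})$. With that corrected, your argument is sound and is essentially the paper's.
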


\begin{proof}
From the inductive assumption
%\begin{equation}
%\label{proof lemma 1 single projection equation 1}
\[
P_{j,k-1}=P_{j,k-1}^* \mod \Psi^{-\infty}
\]
%\end{equation}
it follows that $R_{j,k}$ is Hermitian,
\begin{equation}
\label{proof lemma 1 single projection equation 2}
R_{j,k}=R_{j,k}^*.
\end{equation}
Therefore, \eqref{Construction of a single pseudodifferential projection equation 12} satisfies \eqref{Construction of a single pseudodifferential projection equation 8}.

Direct inspection of \eqref{Construction of a single pseudodifferential projection equation 7} tells us that the system \eqref{Construction of a single pseudodifferential projection equation 7}, \eqref{Construction of a single pseudodifferential projection equation 8} has a solution only if
\begin{equation}
\label{proof lemma 1 single projection equation 3}
P^{(j)}
R_{j,k}
(I-P^{(j)})=0.
\end{equation}
Of course, \eqref{proof lemma 1 single projection equation 3} and \eqref{proof lemma 1 single projection equation 2} imply
\begin{equation}
\label{proof lemma 1 single projection equation 4}
(I-P^{(j)})
R_{j,k}
P^{(j)}=0.
\end{equation}

Let us show that \eqref{proof lemma 1 single projection equation 3} is satisfied. We have
\begin{equation*}
\label{proof lemma 1 single projection equation 5}
\begin{split}
P^{(j)}
R_{j,k}
(I-P^{(j)})
&
=
-P^{(j)}
[(P_{j,k-1})^2-P_{j,k-1}]_\mathrm{prin,k}
(I-P^{(j)})
\\
&
=
-
[P_{j,k-1}]_\mathrm{prin,0}[(P_{j,k-1})^2-P_{j,k-1}]_\mathrm{prin,k}[\operatorname{Id}-P_{j,k-1})]_\mathrm{prin,0}
\\
&
=
-
[P_{j,k-1}((P_{j,k-1})^2-P_{j,k-1})(\operatorname{Id}-P_{j,k-1})]_\mathrm{prin,k}
\\
&
=
[((P_{j,k-1})^2-P_{j,k-1})^2]_\mathrm{prin,k}
\\
&
=0\,.
\end{split}
\end{equation*}
In the last step we used the fact that since $(P_{j,k-1})^2-P_{j,k-1}\in \Psi^{-k}$ by inductive assumption, then $((P_{j,k-1})^2-P_{j,k-1})^2\in \Psi^{-2k}$, hence its $k$-principal symbol is zero.

It remains only to substitute \eqref{Construction of a single pseudodifferential projection equation 12} and \eqref{Construction of a single pseudodifferential projection equation 13} into \eqref{Construction of a single pseudodifferential projection equation 7} with account of \eqref{proof lemma 1 single projection equation 3}--\eqref{proof lemma 1 single projection equation 4} and observe that $Y_{j,k}+Y_{j,k}^*$ is the general solution of the homogeneous system
\begin{equation*}
P^{(j)} Y
+
Y P^{(j)}
-
Y=0, \qquad Y=Y^*.
\end{equation*}
\end{proof}

All in all, the above argument establishes the following result.

\begin{theorem}
\label{theorem single pseudodifferential projection}
Given a rank $1$ orthogonal projection $P^{(j)}\in C^\infty(T^*M;\mathrm{Mat}(m,\mathbb{C}))$, the associated orthogonal pseudodifferential projection $P_j\in \Psi^0$ in the sense of Definition~\ref{definition of pseudodifferential projection} exists and is given by
\begin{equation}
\label{theorem single pseudodifferential projection equation 1}
P_j \sim P_{j,0}+\sum_{k=1}^{+\infty} X_{j,k},
\end{equation}
where $P_{j,0}\in \Psi^0$ is an arbitrary operator satisfying $(P_{j,0})_\mathrm{prin}=P^{(j)}$, $P_{j,0}=P_{j,0}^*$, and the operators $X_{j,k}\in \Psi^{-k}$, $k=1,2,\ldots$, are constructed iteratively from $P_{j,0}$ by means of Lemma~\ref{Construction of a single pseudodifferential projection lemma 1}. Here $\sim$ stands for asymptotic expansion in smoothness.
\end{theorem}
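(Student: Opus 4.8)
The plan is to assemble Theorem~\ref{theorem single pseudodifferential projection} as a formal consequence of the iterative scheme already set up: conditions \eqref{Construction of a single pseudodifferential projection equation 1}--\eqref{Construction of a single pseudodifferential projection equation 3} together with Lemma~\ref{Construction of a single pseudodifferential projection lemma 1}. First I would record the base case: pick any $P_{j,0}\in\Psi^0$ with $(P_{j,0})_\mathrm{prin}=P^{(j)}$ and $P_{j,0}=P_{j,0}^*$ exactly (e.g. take a symbol equal to $P^{(j)}$ in each coordinate chart, quantise, glue with a partition of unity, and symmetrise by replacing the operator with $\tfrac12(P_{j,0}+P_{j,0}^*)$). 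Since $P^{(j)}$ is a genuine projection, $(P_{j,0}^2-P_{j,0})_{\mathrm{prin},0}=(P^{(j)})^2-P^{(j)}=0$, so $P_{j,0}^2-P_{j,0}\in\Psi^{-1}$ and \eqref{Construction of a single pseudodifferential projection equation 2} holds for $k=0$.

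Next comes the induction. Assume $P_{j,k-1}$ has been constructed satisfying \eqref{Construction of a single pseudodifferential projection equation 2}--\eqref{Construction of a single pseudodifferential projection equation 3} at level $k-1$. Then $R_{j,k}$ defined by \eqref{Construction of a single pseudodifferential projection equation 9} is a well-defined homogeneous matrix-function of degree $-k$, and Lemma~\ref{Construction of a single pseudodifferential projection lemma 1} produces a homogeneous matrix-function $(X_{j,k})_\mathrm{prin}$ of degree $-k$ solving \eqref{Construction of a single pseudodifferential projection equation 7}--\eqref{Construction of a single pseudodifferential projection equation 8}; I would take $Y_{j,k}=0$ for concreteness. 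Quantising this symbol and symmetrising gives $X_{j,k}\in\Psi^{-k}$ with $X_{j,k}=X_{j,k}^*\bmod\Psi^{-\infty}$, i.e. \eqref{Construction of a single pseudodifferential projection equation 4bis}; set $P_{j,k}:=P_{j,k-1}+X_{j,k}$. I would then verify that the defining equation for $(X_{j,k})_\mathrm{prin}$ is exactly the $k$-principal part of the condition $P_{j,k}^2=P_{j,k}\bmod\Psi^{-k-1}$: expanding $(P_{j,k-1}+X_{j,k})^2-(P_{j,k-1}+X_{j,k})$ and using that $P_{j,k-1}^2-P_{j,k-1}\in\Psi^{-k}$, that $X_{j,k}\in\Psi^{-k}$ so $X_{j,k}^2\in\Psi^{-2k}\subset\Psi^{-k-1}$, and that $[P_{j,k-1},X_{j,k}]$ has symbol of order $-k-1$ because $(P_{j,k-1})_\mathrm{prin}=P^{(j)}$ and the leading terms of the operator product/composition coincide with matrix products of symbols, one gets that the order-$(-k)$ symbol of $P_{j,k}^2-P_{j,k}$ is precisely $P^{(j)}(X_{j,k})_\mathrm{prin}+(X_{j,k})_\mathrm{prin}P^{(j)}-(X_{j,k})_\mathrm{prin}-R_{j,k}$, which vanishes by \eqref{Construction of a single pseudodifferential projection equation 7}. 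Hence \eqref{Construction of a single pseudodifferential projection equation 2} holds at level $k$, and \eqref{Construction of a single pseudodifferential projection equation 3} holds by construction, closing the induction.

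Finally I would invoke Borel summation / asymptotic completeness for polyhomogeneous symbols: since $X_{j,k}\in\Psi^{-k}$ with orders decreasing to $-\infty$, there exists $P_j\in\Psi^0$ with $P_j\sim P_{j,0}+\sum_{k\ge1}X_{j,k}$, meaning $P_j-\bigl(P_{j,0}+\sum_{k=1}^{N}X_{j,k}\bigr)\in\Psi^{-N-1}$ for every $N$. Thus $P_j-P_{j,N}\in\Psi^{-N-1}$ for all $N$, so $P_j^2-P_j\equiv P_{j,N}^2-P_{j,N}\equiv 0\bmod\Psi^{-N-1}$ for every $N$, giving $P_j^2=P_j\bmod\Psi^{-\infty}$, i.e. \eqref{definition of pseudodifferential projection condition 2}; similarly $P_j^*-P_j\equiv P_{j,N}^*-P_{j,N}\bmod\Psi^{-\infty}$ combined with \eqref{Construction of a single pseudodifferential projection equation 3} yields \eqref{definition of pseudodifferential projection condition 3}; and $(P_j)_\mathrm{prin}=(P_{j,0})_\mathrm{prin}=P^{(j)}$ since all the $X_{j,k}$ have strictly negative order. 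Nontriviality follows because the principal symbol $P^{(j)}$ is nonzero. I expect the only genuinely delicate point to be the bookkeeping in the induction step — checking that the commutator term $[P_{j,k-1},X_{j,k}]$ and the cross term $X_{j,k}^2$ contribute nothing at order $-k$, so that the order-$(-k)$ obstruction is exactly the algebraic expression handled by Lemma~\ref{Construction of a single pseudodifferential projection lemma 1}; everything else is standard pseudodifferential calculus plus asymptotic summation.
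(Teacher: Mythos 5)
Your argument follows the paper's own inductive construction exactly: choose a self-adjoint $P_{j,0}$ with the right principal symbol, apply Lemma~\ref{Construction of a single pseudodifferential projection lemma 1} at each step to determine $(X_{j,k})_\mathrm{prin}$, and conclude by Borel summation. The structure, the induction, and the final assembly all match the paper's proof, and your choice $Y_{j,k}=0$ is perfectly legitimate here since the degrees of freedom in the lemma are only needed later for orthonormality and commutation with $A$.

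One auxiliary remark you make is incorrect, although it is not actually used and so does not invalidate the proof: you assert that $[P_{j,k-1},X_{j,k}]$ has symbol of order $-k-1$. This is false in general. The leading, order-$(-k)$ symbol of the operator commutator $[P_{j,k-1},X_{j,k}]$ is the \emph{matrix} commutator $[P^{(j)},(X_{j,k})_\mathrm{prin}]$, which does not vanish because $P^{(j)}$ and $(X_{j,k})_\mathrm{prin}$ are noncommuting matrix-functions; the Poisson-bracket (derivative) corrections only enter at order $-k-1$, but the zeroth-order composition term already contributes a nonzero matrix commutator at order $-k$. Had you actually used this claim you would have been led to $2P^{(j)}(X_{j,k})_\mathrm{prin}-(X_{j,k})_\mathrm{prin}-R_{j,k}$ rather than the correct symmetric expression. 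Fortunately, in the same sentence you compute the order-$(-k)$ symbol of $P_{j,k-1}X_{j,k}+X_{j,k}P_{j,k-1}$ directly as $P^{(j)}(X_{j,k})_\mathrm{prin}+(X_{j,k})_\mathrm{prin}P^{(j)}$, which is correct and agrees with equation~\eqref{Construction of a single pseudodifferential projection equation 7}. Simply delete the parenthetical claim about the commutator; everything else stands.
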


Formula \eqref{theorem single pseudodifferential projection equation 1} allows one to explicitly determine the symbol of $P_j$ with arbitrarily high accuracy.

Note that for each $P_j$ at every stage of the iterative process we have $m-1$ complex-valued scalar degrees of freedom, see~\eqref{Construction of a single pseudodifferential projection equation 13}. Because we have $m$ different $P_j$'s, at every step of the iterative process we have a total of $m(m-1)$ complex-valued scalar degrees of freedom.

\subsection{Construction of a basis of pseudodifferential projections}
\label{Construction of a basis of pseudodifferential projections}

In this subsection we establish the existence and the general structure
of an orthonormal pseudodifferential basis, in the sense of Definition~\ref{definition of system of pseudodifferential projections}, thus proving Theorem~\ref{Main results theorem 0}.

Suppose we are given $m$ orthonormal rank $1$ projections $P^{(j)}(x,\xi)$ satisfying \eqref{principal symbols sum to identity}, not necessarily coinciding with the eigenprojections of $A_\mathrm{prin}$.
They determine, via Theorem~\ref{theorem single pseudodifferential projection}, a corresponding family of pseudodifferential projections $P_j$, satisfying
\begin{equation}
\label{conditions principal symbols sum to identity}
\left(\sum_j P_j\right)_\mathrm{prin}= I.
\end{equation}
The task at hand is to exploit the degrees of freedom left in the symbols of the $P_j$'s to satisfy conditions \eqref{definition of system of pseudodifferential projections condition 1} and \eqref{definition of system of pseudodifferential projections condition 2}.

Firstly, with the help of \eqref{principal symbols sum to identity}, let us rewrite formulae
\eqref{Construction of a single pseudodifferential projection equation 12}
and
\eqref{Construction of a single pseudodifferential projection equation 13}
in the equivalent form
\begin{equation}
\label{Construction of a single pseudodifferential projection equation 12a}
(X_{j,k})_\mathrm{prin}
=-R_{j,k}+P^{(j)}R_{j,k}+R_{j,k}P^{(j)}+\sum_{l\ne j}[Y_{j,l,k}+Y_{j,l,k}^*],
%=P^{(j)}RP^{(j)}-(I-P^{(j)})R(I-P^{(j)})
\end{equation}
where $Y_{j,l,k}$ is an arbitrary matrix-function positively homogeneous in momentum of degree~$-k$ such that
\begin{equation}
\label{Construction of a single pseudodifferential projection equation 13a}
Y_{j,l,k}=P^{(j)}Y_{j,l,k}P^{(l)}.
\end{equation}

Subsection~\ref{Construction of a basis of pseudodifferential projections} gives us, for each $j$, a sequence of operators $P_{j,k}\in \Psi^0$, $k=1,2,\ldots$, satisfying \eqref{Construction of a single pseudodifferential projection equation 1}--\eqref{Construction of a single pseudodifferential projection equation 3} of the form
\begin{equation*}
P_{j,k}=P_{j,k-1}+X_{j,k},
\end{equation*}
where the principal symbol of $X_{j,k}\in \Psi^{-k}$, $X_{j,k}=X_{j,k}^* \mod \Psi^{-\infty}$, is given by \eqref{Construction of a single pseudodifferential projection equation 12a}.
Satisfying \eqref{definition of system of pseudodifferential projections condition 1} reduces to determining $Y_{j,l,k}$ such that
\begin{equation}
\label{basis of projections equation 1}
P_{n,k}P_{j,k}=0 \mod \Psi^{-k-1} \qquad \forall n \ne j.
\end{equation}
%\begin{equation}
%\label{basis of projections equation 2}
%\sum_n P_{n,k}= \mathrm{Id} \mod \Psi^{-k-1}.
%\end{equation}

To this end, let $\widetilde X_{j,k}\in\Psi^{-k}$, $\widetilde X_{j,k}=\widetilde X_{j,k}^*$, be such that
\begin{equation}
\label{Construction of a single pseudodifferential projection equation 14}
(\widetilde X_{j,k})_\mathrm{prin}
=-R_{j,k}+P^{(j)}R_{j,k}+R_{j,k}P^{(j)}.
\end{equation}
Then satisfying \eqref{basis of projections equation 1} reduces to solving
\begin{equation}
\label{basis of projections equation 3}
\sum_{l\ne j}
P^{(n)}
(Y_{j,l,k}+Y_{j,l,k}^*)
+
\sum_{l\ne n}
(Y_{n,l,k}+Y_{n,l,k}^*)P^{(j)}
=R_{n,j,k}
\end{equation}
for all $j\ne n$,
where $Y_{j,l,k}$ is of the form 
\eqref{Construction of a single pseudodifferential projection equation 13a}
and
\begin{equation}
\label{Construction of a single pseudodifferential projection equation 16}
R_{n,j,k}
:=
-[(P_{n,k-1}+\widetilde X_{n,k})(P_{j,k-1}+\widetilde X_{j,k})]_\mathrm{prin,k}\,.
\end{equation}
The system \eqref{basis of projections equation 3} amounts to a total of $m(m-1)$ algebraic equations.

\begin{lemma}
\label{basis of projections lemma 1}
The general solution of the system
\eqref{basis of projections equation 3}
reads
\begin{equation}
\label{Construction of a single pseudodifferential projection equation 21}
Y_{j,l,k}
=
\frac12
R_{j,l,k}
+
Z_{j,l,k}\,,
\end{equation}
where $Z_{j,l,k}$ are arbitrary matrix-functions positively homogeneous in momentum of degree~$-k$ such that
\begin{equation}
\label{Construction of a single pseudodifferential projection equation 22}
Z_{j,l,k}=P^{(j)}Z_{j,l,k}P^{(l)},
\end{equation}
\begin{equation}
\label{Construction of a single pseudodifferential projection equation 23}
Z_{j,l,k}^*
=
-
Z_{l,j,k}\,.
\end{equation}
\end{lemma}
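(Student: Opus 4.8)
The plan is to solve the linear system \eqref{basis of projections equation 3} by exploiting the block structure imposed by the rank-one projections $P^{(n)}$. First I would observe that by \eqref{Construction of a single pseudodifferential projection equation 13a} each $Y_{j,l,k}$ lives in the block $P^{(j)}(\,\cdot\,)P^{(l)}$, so $Y_{j,l,k}^*$ lives in the block $P^{(l)}(\,\cdot\,)P^{(j)}$. Sandwiching \eqref{basis of projections equation 3} between $P^{(a)}$ on the left and $P^{(b)}$ on the right and using \eqref{principal symbols sum to identity} together with $P^{(a)}P^{(b)}=\delta_{ab}P^{(a)}$, only a few terms in the two sums survive. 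A short case analysis on how $a,b$ relate to $n,j$ (the cases $a=n$, $a=j$, $b=n$, $b=j$, and the various coincidences) should show that the only off-diagonal blocks that receive a contribution are the $(n,j)$ block, picking up $Y_{n,j,k}$ from the first sum, and the $(j,n)$ block, picking up $Y_{n,j,k}^*$ from the second sum via the term $(Y_{n,j,k}+Y_{n,j,k}^*)P^{(j)}$ — wait, more carefully: the first sum contributes $P^{(n)}(Y_{j,l,k}+Y_{j,l,k}^*)$, and since $Y_{j,l,k}=P^{(j)}Y_{j,l,k}P^{(l)}$, the term $P^{(n)}Y_{j,l,k}$ vanishes unless $n=j$ (excluded), while $P^{(n)}Y_{j,l,k}^*=P^{(n)}P^{(l)}Y_{j,l,k}^*P^{(j)}$ survives only when $l=n$, giving $Y_{j,n,k}^*P^{(j)}$. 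Similarly the second sum contributes, for $l=j$, the term $P^{(n)}Y_{n,j,k}$. Thus, projecting appropriately, \eqref{basis of projections equation 3} is equivalent to the single block equation $P^{(n)}Y_{n,j,k}P^{(j)}+P^{(n)}Y_{j,n,k}^*P^{(j)}=R_{n,j,k}$, i.e. $Y_{n,j,k}+Y_{j,n,k}^*=R_{n,j,k}$ (after checking $R_{n,j,k}$ lives in the $(n,j)$ block, which follows from \eqref{Construction of a single pseudodifferential projection equation 16} and the block structure of $P_{n,k-1}+\widetilde X_{n,k}$).

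Next I would solve this reduced system. Pairing the equation for $(n,j)$ with the one for $(j,n)$ gives the $2\times 2$ system $Y_{n,j,k}+Y_{j,n,k}^*=R_{n,j,k}$ and $Y_{j,n,k}+Y_{n,j,k}^*=R_{j,n,k}$. Taking adjoints of the second, $Y_{n,j,k}^*+Y_{j,n,k}=R_{j,n,k}^*$; adding and subtracting with the first isolates the symmetric part $\operatorname{Re} Y_{n,j,k}$ (in the appropriate sense) and leaves the skew part free. Concretely, writing $Y_{n,j,k}=\tfrac12 R_{n,j,k}+Z_{n,j,k}$ forces $Z_{n,j,k}$ to satisfy $Z_{n,j,k}+Z_{j,n,k}^*=0$, which is \eqref{Construction of a single pseudodifferential projection equation 23}, while the block constraint \eqref{Construction of a single pseudodifferential projection equation 13a} inherited by $Y_{n,j,k}$ forces \eqref{Construction of a single pseudodifferential projection equation 22}. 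One also needs the consistency check that a solution exists at all: this amounts to verifying $R_{n,j,k}$ has the right block structure and, possibly, a relation like $R_{n,j,k}^*$ being appropriately related to $R_{j,n,k}$ — but since the $Z$'s absorb exactly the skew-adjoint freedom, no further compatibility is needed beyond the block condition, so existence is automatic.

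The main obstacle I anticipate is bookkeeping rather than conceptual: carefully tracking which terms in the two sums of \eqref{basis of projections equation 3} survive each left/right projection, given that the matrix factors do not commute and that one must distinguish $Y_{j,l,k}$ from $Y_{j,l,k}^*$ everywhere. A secondary subtlety is confirming that $R_{n,j,k}$ as defined in \eqref{Construction of a single pseudodifferential projection equation 16} indeed satisfies $P^{(n)}R_{n,j,k}P^{(j)}=R_{n,j,k}$; this requires knowing that $\widetilde X_{n,k}$ has been chosen with $(\widetilde X_{n,k})_\mathrm{prin}$ given by \eqref{Construction of a single pseudodifferential projection equation 14} and then expanding the product $(P_{n,k-1}+\widetilde X_{n,k})(P_{j,k-1}+\widetilde X_{j,k})$ modulo lower order, using $P_{n,k-1}P_{j,k-1}=0\bmod\Psi^{-k}$ from the inductive hypothesis \eqref{basis of projections equation 1} at the previous step. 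Once these structural facts are in place, the explicit formulae \eqref{Construction of a single pseudodifferential projection equation 21}--\eqref{Construction of a single pseudodifferential projection equation 23} drop out immediately.
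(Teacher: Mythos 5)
Your plan follows essentially the same route as the paper's proof: project \eqref{basis of projections equation 3} onto blocks using the rank-one decomposition, reduce to the single block equation $Y_{n,j,k}+Y_{j,n,k}^*=R_{n,j,k}$, write a particular solution $\tfrac12 R_{n,j,k}$ and parameterize the remaining freedom by $Z$'s subject to \eqref{Construction of a single pseudodifferential projection equation 22}--\eqref{Construction of a single pseudodifferential projection equation 23}. The block bookkeeping in your first paragraph is correct, and you correctly flag the two structural facts that need checking, namely the block structure of $R_{n,j,k}$ and a Hermiticity-type relation between $R_{n,j,k}$ and $R_{j,n,k}$.

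One point of logic deserves correction, however. You write that the relation between $R_{n,j,k}^*$ and $R_{j,n,k}$ is ``possibly'' needed, and then dismiss it on the grounds that ``the $Z$'s absorb exactly the skew-adjoint freedom, so existence is automatic.'' This reasoning is not sound. Once you reduce to the pair of equations $Y_{n,j,k}+Y_{j,n,k}^*=R_{n,j,k}$ and $Y_{j,n,k}+Y_{n,j,k}^*=R_{j,n,k}$, taking the adjoint of the second gives $Y_{n,j,k}+Y_{j,n,k}^*=R_{j,n,k}^*$, so the two equations are consistent if and only if $R_{j,n,k}^*=R_{n,j,k}$. If that relation failed, the system would simply have no solution, and no choice of $Z$ could fix it: the $Z$'s parameterize the kernel of the homogeneous system, they cannot repair an inconsistent inhomogeneous one. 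So $R_{j,l,k}^*=R_{l,j,k}$ is a genuine necessary solvability condition, on the same footing as the block condition $R_{j,l,k}=P^{(j)}R_{j,l,k}P^{(l)}$, and both must be verified. The paper records the Hermiticity relation explicitly at the start of its proof, deriving it directly from \eqref{Construction of a single pseudodifferential projection equation 16} together with the fact that $P_{j,k-1}$ and $\widetilde X_{j,k}$ are self-adjoint modulo $\Psi^{-\infty}$. You should include this check: fortunately it does hold, so your final formulae \eqref{Construction of a single pseudodifferential projection equation 21}--\eqref{Construction of a single pseudodifferential projection equation 23} are correct.

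As a smaller remark, for the block structure check $R_{j,l,k}=P^{(j)}R_{j,l,k}P^{(l)}$ the paper does not invoke the inductive hypothesis $P_{n,k-1}P_{j,k-1}=0\bmod\Psi^{-k}$ directly (as you suggest) but rather the idempotency $(P_{j,k-1}+\widetilde X_{j,k})^2 = P_{j,k-1}+\widetilde X_{j,k}\bmod\Psi^{-k-1}$ established in the previous subsection, inserting and removing the factors $P_{j,k-1}+\widetilde X_{j,k}$ and $P_{l,k-1}+\widetilde X_{l,k}$ at the level of the $k$-principal symbol. Either route works, but the idempotency argument is the cleaner one to spell out.
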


\begin{proof}
Formula \eqref{Construction of a single pseudodifferential projection equation 16} implies
\begin{equation*}
\label{proof basis of projections lemma 1 equation 1}
R_{j,l,k}^*=R_{l,j,k}.
\end{equation*}

Direct inspection of \eqref{basis of projections equation 3} tells us that a necessary solvability condition reads
\begin{equation}
\label{proof basis of projections lemma 1 equation 2}
R_{j,l,k}
=
P^{(j)}
R_{j,l,k}
P^{(l)}.
\end{equation}
Let us show that \eqref{proof basis of projections lemma 1 equation 2} is satisfied. We have
\begin{equation*}
\label{proof basis of projections lemma 1 equation 3}
\begin{split}
P^{(j)}
R_{j,l,k}
P^{(l)}
&
=
-
P^{(j)}
[(P_{j,k-1}+\widetilde X_{j,k})(P_{l,k-1}+\widetilde X_{l,k})]_\mathrm{prin,k}
P^{(l)}
\\
&
=
-
[P_{j,k-1}+\widetilde X_{j,k}]_\mathrm{prin,0}
\,
[(P_{j,k-1}+\widetilde X_{j,k})(P_{l,k-1}+\widetilde X_{l,k})]_\mathrm{prin,k}
\,
[P_{l,k-1}+\widetilde X_{l,k}]_\mathrm{prin,0}
\\
&
=
-
[(P_{j,k-1}+\widetilde X_{j,k})^2(P_{l,k-1}+\widetilde X_{l,k})^2]_\mathrm{prin,k}
\\
&
=
-
[(P_{j,k-1}+\widetilde X_{j,k})(P_{l,k-1}+\widetilde X_{l,k})]_\mathrm{prin,k}
\\
&
=
R_{j,l,k}\,.
\end{split}
\end{equation*}
In the above argument we used the fact that
\[
(P_{j,k-1}+\widetilde X_{j,k})^2=P_{j,k-1}+\widetilde X_{j,k}\mod\Psi^{-k-1}
\]
for all $j$.

Of course, \eqref{proof basis of projections lemma 1 equation 2} implies
\begin{equation}
\label{proof basis of projections lemma 1 equation 4}
R_{j,l,k}=P^{(j)}R_{j,l,k}=R_{j,l,k}P^{(l)}.
\end{equation} 
It remains only to substitute \eqref{Construction of a single pseudodifferential projection equation 21} into \eqref{basis of projections equation 3} with account of \eqref{proof basis of projections lemma 1 equation 4} and observe that $Z_{j,l,k}$ is the general solution of the homogeneous system
\begin{equation}
\sum_{l\ne j}
P^{(n)}
(Z_{j,l,k}+Z_{j,l,k}^*)
+
\sum_{l\ne n}
(Z_{n,l,k}+Z_{n,l,k}^*)P^{(j)}=0.
\end{equation}
\end{proof}

As it turns out, condition \eqref{definition of system of pseudodifferential projections condition 2} is automatically satisfied.

\begin{theorem}
\label{theorem identity operator}
Let $\{P_j\}$ be a family of $m$ pseudodifferential operators of order zero with rank 1 principal symbols, satisfying 
\eqref{definition of pseudodifferential projection condition 2}, \eqref{definition of system of pseudodifferential projections condition 1}
and
\begin{equation}
\label{theorem identity operator equation 1}
\sum_j (P_j)_\mathrm{prin}=I.
\end{equation}
Then \eqref{definition of system of pseudodifferential projections condition 2} is also satisfied.
\end{theorem}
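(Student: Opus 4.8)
The plan is to work with the single operator $P:=\sum_j P_j\in\Psi^0$ and show directly that $P=\operatorname{Id}\mod\Psi^{-\infty}$. The first step is to observe that $P$ is itself an approximate projection. Expanding $P^2=\sum_{j,k}P_jP_k$ and splitting the double sum into diagonal and off-diagonal parts, the diagonal terms satisfy $\sum_j P_j^2=\sum_j P_j\mod\Psi^{-\infty}$ by \eqref{definition of pseudodifferential projection condition 2}, while the off-diagonal terms vanish modulo $\Psi^{-\infty}$ by \eqref{definition of system of pseudodifferential projections condition 1}. Hence $P^2=P\mod\Psi^{-\infty}$.

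Next I would introduce the remainder $R:=\operatorname{Id}-P$. Hypothesis \eqref{theorem identity operator equation 1} gives $R_\mathrm{prin}=I-\sum_j(P_j)_\mathrm{prin}=0$, so that $R\in\Psi^{-1}$. A one-line computation transfers the projection relation to $R$: since $R^2-R=(\operatorname{Id}-P)^2-(\operatorname{Id}-P)=P^2-P$, we get $R^2=R\mod\Psi^{-\infty}$.

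The conclusion then follows by a bootstrap on orders. Suppose $R\in\Psi^{-k}$ for some $k\ge1$. Then $R^2\in\Psi^{-2k}\subseteq\Psi^{-k-1}$, and since $R=R^2\mod\Psi^{-\infty}$ with $\Psi^{-\infty}\subset\Psi^{-k-1}$, we conclude $R\in\Psi^{-k-1}$. Starting from $R\in\Psi^{-1}$ and iterating, $R\in\bigcap_{k\ge1}\Psi^{-k}=\Psi^{-\infty}$, i.e.\ $\sum_j P_j=\operatorname{Id}\mod\Psi^{-\infty}$, which is precisely \eqref{definition of system of pseudodifferential projections condition 2}.

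There is no serious obstacle here; the only point that needs a little care is the bootstrap step, where one must keep track of the composition orders and use that squaring an operator of strictly negative order strictly decreases its order, so the gain compounds. It is worth remarking that this argument uses neither the self-adjointness of the $P_j$ nor any property of the operator $A$ — only the algebraic relations \eqref{definition of pseudodifferential projection condition 2}, \eqref{definition of system of pseudodifferential projections condition 1} and the principal-symbol normalisation \eqref{theorem identity operator equation 1}.
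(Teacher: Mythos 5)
Your proof is correct and is essentially the same as the paper's: both observe that $\sum_j P_j$ is an approximate projection with principal symbol $I$, set up the remainder $\operatorname{Id}-\sum_j P_j\in\Psi^{-1}$, and use the approximate idempotence to force the remainder into $\Psi^{-\infty}$. You phrase the last step as an explicit order bootstrap on $R^2=R\mod\Psi^{-\infty}$, whereas the paper runs a contradiction argument on the principal symbol of $\widetilde{\operatorname{Id}}\,B=B+B^2$; the two are the same calculation in different clothing.
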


\begin{proof}
Let us define
\begin{equation*}
\label{proof theorem identity operator equation 1}
\widetilde{\mathrm{Id}}:=\sum_j P_j
\end{equation*}
and let us put
\begin{equation}
\label{proof theorem identity operator equation 2}
B:=\widetilde{\mathrm{Id}}-\mathrm{Id}.
\end{equation}
The task at hand is to show that $B\in \Psi^{-\infty}$. Arguing by contradiction, suppose there exists a natural number $k$
such that
\begin{equation*}
B\in \Psi^{-k}
\end{equation*}
but 
\begin{equation*}
B \not \in \Psi^{-k-1}.
\end{equation*}

The principal symbol of the operator $B$ is positively homogeneous in momentum of degree $-k$
and has the property
\begin{equation}
\label{10 July 2020 equation 6}
B_\mathrm{prin}(x,\xi)\ne0\quad\text{for some}\quad(x,\xi)\in T^*M\setminus\{0\}.
\end{equation}

On account of 
\eqref{definition of pseudodifferential projection condition 2}
and
\eqref{definition of system of pseudodifferential projections condition 1},
formula 
\eqref{proof theorem identity operator equation 2} 
implies
\begin{equation}
\label{10 July 2020 equation 7}
\widetilde{\operatorname{Id}}\,B\in\Psi^{-\infty}.
\end{equation}
We have
\begin{equation}
\label{10 July 2020 equation 8}
(\widetilde{\operatorname{Id}}\,B)_\mathrm{prin}
=
{\widetilde{\operatorname{Id}}}_\mathrm{prin}B_\mathrm{prin}\,.
\end{equation}
But by \eqref{theorem identity operator equation 1}
\begin{equation}
\label{10 July 2020 equation 9}
{\widetilde{\operatorname{Id}}}_\mathrm{prin}=I,
\end{equation}
so formulae \eqref{10 July 2020 equation 8} and \eqref{10 July 2020 equation 9} imply
\begin{equation}
\label{10 July 2020 equation 10}
(\widetilde{\operatorname{Id}}\,B)_\mathrm{prin}
=
B_\mathrm{prin}\,.
\end{equation}

Formulae \eqref{10 July 2020 equation 6} and \eqref{10 July 2020 equation 10} imply
\begin{equation*}
\label{10 July 2020 equation 11}
(\widetilde{\operatorname{Id}}\,B)_\mathrm{prin,k}(x,\xi)\ne0\quad\text{for some}\quad(x,\xi)\in T^*M\setminus\{0\},
\end{equation*}
which, in turn, implies
\begin{equation*}
\label{10 July 2020 equation 12}
\widetilde{\operatorname{Id}}\,B\not\in\Psi^{-k-1}.
\end{equation*}
The latter contradicts \eqref{10 July 2020 equation 7}.
\end{proof}

All in all, the above arguments establish the following result.

\begin{theorem}
\label{theorem basis of pseudodifferential projections}
Given $m$ orthonormal rank 1 projections 
\[
P^{(j)}\in C^\infty(T^*M;\mathrm{Mat}(m,\mathbb{C})),
\]
there exists an orthonormal pseudodifferential basis $\{P_j\}$ in the sense of Definition~\ref{definition of system of pseudodifferential projections} satisfying $(P_j)_\mathrm{prin}=P^{(j)}$.
Furthermore, we have
\begin{equation}
\label{theorem basis of pseudodifferential projections equaton 1}
P_j \sim P_{j,0}+\sum_{k=1}^{+\infty} X_{j,k},
\end{equation}
where $P_{j,0}\in \Psi^0$ is an arbitrary operator satisfying $(P_{j,0})_\mathrm{prin}=P^{(j)}$, $P_{j,0}=P_{j,0}^*$, and the operators $X_{j,k}\in \Psi^{-k}$, $X_{j,k}=X_{j,k}^*\mod\Psi^{-\infty}$, $k=1,2,\ldots$, are constructed iteratively from $P_{j,0}$ in accordance with \eqref{Construction of a single pseudodifferential projection equation 12a}, \eqref{Construction of a single pseudodifferential projection equation 9},  Lemma~\ref{basis of projections lemma 1} and \eqref{Construction of a single pseudodifferential projection equation 16}.  Here $\sim$ stands for asymptotic expansion in smoothness.
\end{theorem}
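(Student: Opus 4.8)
The plan is to assemble this theorem from the pieces already developed in Subsections~\ref{Construction of a single pseudodifferential projection} and~\ref{Construction of a basis of pseudodifferential projections}, essentially by bookkeeping the iterative construction and invoking Theorem~\ref{theorem identity operator} at the end. First I would fix an arbitrary self-adjoint operator $P_{j,0}\in\Psi^0$ with $(P_{j,0})_\mathrm{prin}=P^{(j)}$ for each $j$; such operators exist since $P^{(j)}$ is a smooth matrix-function on $T^*M$, homogeneous of degree zero, and one may symmetrise. The hypothesis \eqref{principal symbols sum to identity} guarantees that $\sum_j(P_{j,0})_\mathrm{prin}=I$, i.e.\ \eqref{conditions principal symbols sum to identity} holds at the base step. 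Then I would run the induction on $k$: assuming $P_{j,k-1}$ have been constructed for all $j$ so that \eqref{Construction of a single pseudodifferential projection equation 1}--\eqref{Construction of a single pseudodifferential projection equation 3} hold, \eqref{basis of projections equation 1} holds with $k$ replaced by $k-1$, and $\sum_j(P_{j,k-1})_\mathrm{prin}=I$, I set $P_{j,k}=P_{j,k-1}+X_{j,k}$ with $(X_{j,k})_\mathrm{prin}$ given by \eqref{Construction of a single pseudodifferential projection equation 12a}.

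The heart of the inductive step is to choose the free parameters $Y_{j,l,k}$ in \eqref{Construction of a single pseudodifferential projection equation 12a}. Lemma~\ref{Construction of a single pseudodifferential projection lemma 1} (in the form \eqref{Construction of a single pseudodifferential projection equation 12a}--\eqref{Construction of a single pseudodifferential projection equation 13a}) shows that any such choice yields operators $P_{j,k}$ satisfying \eqref{Construction of a single pseudodifferential projection equation 2}--\eqref{Construction of a single pseudodifferential projection equation 3}; the additional requirement \eqref{basis of projections equation 1} translates, after isolating the $Y$-dependent part via the auxiliary operators $\widetilde X_{j,k}$ of \eqref{Construction of a single pseudodifferential projection equation 14}, into the algebraic system \eqref{basis of projections equation 3} with right-hand sides \eqref{Construction of a single pseudodifferential projection equation 16}. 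Lemma~\ref{basis of projections lemma 1} then provides its general solution \eqref{Construction of a single pseudodifferential projection equation 21}, depending on further free parameters $Z_{j,l,k}$ subject only to \eqref{Construction of a single pseudodifferential projection equation 22}--\eqref{Construction of a single pseudodifferential projection equation 23}; any admissible choice (e.g.\ $Z_{j,l,k}=0$) closes the step. I must also check that the new symbols still sum to the identity at principal order, i.e.\ $\sum_j(X_{j,k})_\mathrm{prin}=0$: this follows because each $(X_{j,k})_\mathrm{prin}$ lies in the range of $\mathrm{id}\mapsto P^{(j)}(\cdot)(I-P^{(j)})+\text{transpose}$, and summing the defining relations together with $\sum_j P^{(j)}=I$ forces the sum to vanish — alternatively, one notes that $\sum_j P_{j,k}$ already satisfies the hypotheses of Theorem~\ref{theorem identity operator} truncated at order $k$.

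Having run the induction to all orders, I invoke Borel-type asymptotic summation to produce operators $P_j\in\Psi^0$ with $P_j\sim P_{j,0}+\sum_{k\ge1}X_{j,k}$; by construction these satisfy $(P_j)_\mathrm{prin}=P^{(j)}$, are self-adjoint mod $\Psi^{-\infty}$, satisfy $P_j^2=P_j$ mod $\Psi^{-\infty}$ (from \eqref{Construction of a single pseudodifferential projection equation 2} for all $k$), satisfy $P_nP_j=0$ mod $\Psi^{-\infty}$ for $n\ne j$ (from \eqref{basis of projections equation 1} for all $k$), and satisfy $\sum_j(P_j)_\mathrm{prin}=I$. This is exactly the hypothesis of Theorem~\ref{theorem identity operator}, which upgrades the last relation to $\sum_j P_j=\mathrm{Id}$ mod $\Psi^{-\infty}$, i.e.\ \eqref{definition of system of pseudodifferential projections condition 2}. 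Hence $\{P_j\}$ is an orthonormal pseudodifferential basis in the sense of Definition~\ref{definition of system of pseudodifferential projections}, and formula \eqref{theorem basis of pseudodifferential projections equaton 1} records the construction. The only genuinely delicate point is making sure the three families of constraints imposed at step $k$ — idempotency, self-adjointness, and cross-vanishing — are simultaneously solvable; this is precisely what Lemmas~\ref{Construction of a single pseudodifferential projection lemma 1} and~\ref{basis of projections lemma 1} guarantee, the key being the solvability conditions \eqref{proof lemma 1 single projection equation 3} and \eqref{proof basis of projections lemma 1 equation 2}, which hold automatically by the inductive hypothesis. So the main obstacle has, in effect, already been dispatched in the preceding subsections, and the proof here is the verification that the pieces interlock.
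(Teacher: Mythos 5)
Your proposal is correct and takes essentially the same route as the paper, which establishes this theorem precisely by assembling Lemma~\ref{Construction of a single pseudodifferential projection lemma 1} in the form \eqref{Construction of a single pseudodifferential projection equation 12a}--\eqref{Construction of a single pseudodifferential projection equation 13a}, Lemma~\ref{basis of projections lemma 1} and Theorem~\ref{theorem identity operator}, with an asymptotic summation at the end. One minor remark: your intermediate claim that $\sum_j(X_{j,k})_\mathrm{prin}=0$ is neither needed (Theorem~\ref{theorem identity operator} only requires the degree-zero principal symbols to sum to $I$, which is untouched by the $X_{j,k}\in\Psi^{-k}$) nor true in general (for instance $\sum_j X_{j,1}$ must compensate for $\sum_j P_{j,0}-\operatorname{Id}\in\Psi^{-1}$, which need not vanish), but since it occurs in a superfluous step it does not affect the validity of your argument.
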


Note that examination of formulae
\eqref{Construction of a single pseudodifferential projection equation 22}
and
\eqref{Construction of a single pseudodifferential projection equation 23}
shows that at every stage of the iterative process
we have a total of $\frac{m(m-1)}2$ complex-valued scalar degrees of freedom left in our construction.

\subsection{Commutation with an elliptic operator}
\label{Commutation with an elliptic operator}

In this subsection we will exploit the remaining degrees of freedom left in the symbols of our pseudodifferential basis to impose that individual projections commute with the operator $A$, in accordance with \eqref{commutation condition}. We shall then show that this uniquely determines our pseudodifferential projections modulo $\Psi^{-\infty}$, thus completing the proof of Theorem~\ref{Main results theorem 1}.

Let $A\in \Psi^s$ be as in Section~\ref{Statement of the problem}. Suppose we are given an orthonormal pseudodifferential basis constructed in accordance with subsection~\ref{Construction of a basis of pseudodifferential projections}, whose principal symbols are the eigenprojections of $A_\mathrm{prin}$.

Imposing condition \eqref{commutation condition} is equivalent to requiring, for every $j$,
\begin{equation}
\label{commutation with operator recursive condition}
[A,P_{j,k}]=0 \mod \Psi^{-k-1}
\end{equation}
recursively, for $k=0,1,\ldots$.

For $k=0$, condition \eqref{commutation with operator recursive condition} is automatically satisfied. In fact,
\begin{equation*}
\label{commutation with the operator equation 1}
\begin{split}
[A,P_{j,0}]=0 \mod \Psi^{-1} 
\quad 
\Longleftrightarrow 
\quad 
[A,P_{j,0}]_\mathrm{prin}=0
\end{split}
\end{equation*}
and
\begin{equation*}
\label{commutation with the operator equation 2}
\begin{split}
[A,P_{j,0}]_\mathrm{prin}
&
=
A_\mathrm{prin}P^{(j)} - P^{(j)}A_\mathrm{prin}
\\
&
=
\sum_l h^{(l)}[P^{(l)},P^{(j)}]
=
0.
\end{split}
\end{equation*}

Let $\widetilde{R}_{j,k}\in \Psi^{-k}$, $\widetilde{R}_{j,k}=\widetilde{R}_{j,k}^* \mod \Psi^{-\infty}$, be such that
\begin{equation}
\label{commutation with the operator equation 3}
[\widetilde{R}_{j,k}]_\mathrm{prin}=[\widetilde{X}_{j,k}]_\mathrm{prin}+\frac12\sum_{l\ne j} (R_{j,l,k} +R_{l,j,k}),
\end{equation}
see~\eqref{Construction of a single pseudodifferential projection equation 14}, \eqref{Construction of a single pseudodifferential projection equation 16} and \eqref{Construction of a single pseudodifferential projection equation 21},
and define
\begin{equation}
\label{commutation with the operator equation 4}
M_{j,k}:=[A,P_{j,k-1}+\widetilde{R}_{j,k}]_\mathrm{prin,k-s}.
\end{equation}

Then, for $k\ge 1$, satisfying \eqref{commutation with operator recursive condition} reduces to determining $Z_{j,l,k}$ such that
\begin{equation}
\label{commutation with the operator equation 5}
\sum_{l\ne j}(h^{(j)}-h^{(l)})[Z_{j,l,k}+Z_{l,j,k}]=-M_{j,k},
\end{equation}
\begin{equation}
\label{commutation with the operator equation 6}
Z_{j,l,k}=P^{(j)}Z_{j,l,k}P^{(l)}, \qquad Z_{j,l,k}=-Z_{l,j,k}^*.
\end{equation}

\begin{lemma}
\label{lemma commutation with the operator}
A solution to \eqref{commutation with the operator equation 5}, \eqref{commutation with the operator equation 6} is given by
%\textcolor{red}{
%\begin{equation}
%\label{lemma commutation with the operator equation 1 bis}
%Z_{j,l,k}=-\frac{P^{(j)}M_{j,k}P^{(l)}- P^{(j)}M_{l,k}P^{(l)}}{2(h^{(j)}-h^{(l)})}.
%\end{equation}
%or, equivalently,
%}
\begin{equation}
\label{lemma commutation with the operator equation 1}
Z_{j,l,k}=-\frac{P^{(j)}M_{j,k}P^{(l)}}{h^{(j)}-h^{(l)}}.
\end{equation}
\end{lemma}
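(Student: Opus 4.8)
The plan is to verify directly that the matrix-functions $Z_{j,l,k}$ defined by \eqref{lemma commutation with the operator equation 1} satisfy both the algebraic constraint \eqref{commutation with the operator equation 6} and the equation \eqref{commutation with the operator equation 5}. The constraint \eqref{commutation with the operator equation 6} splits into two parts. The support condition $Z_{j,l,k}=P^{(j)}Z_{j,l,k}P^{(l)}$ is immediate from the explicit form of \eqref{lemma commutation with the operator equation 1}, since $P^{(j)}$ and $P^{(l)}$ are idempotent. For the symmetry condition $Z_{j,l,k}=-Z_{l,j,k}^*$, I would first establish that $M_{j,k}$ is Hermitian: this follows from the self-adjointness of $A$ together with the fact that $P_{j,k-1}+\widetilde R_{j,k}$ is symmetric modulo $\Psi^{-\infty}$ (the operators $P_{j,k-1}$ are symmetric by the inductive construction of subsection~\ref{Construction of a basis of pseudodifferential projections}, and $\widetilde R_{j,k}$ is symmetric by hypothesis), so that $[A,P_{j,k-1}+\widetilde R_{j,k}]$ is anti-self-adjoint and hence its $(k-s)$-principal symbol, being the symbol of a commutator, works out to be Hermitian; alternatively one notes $M_{j,k}=M_{j,k}^*$ directly from the symbol calculus. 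Given $M_{j,k}=M_{j,k}^*$ and the reality of the $h^{(l)}$, one has $Z_{l,j,k}^*=-\left(\dfrac{P^{(l)}M_{l,k}P^{(j)}}{h^{(l)}-h^{(j)}}\right)^{*}=-\dfrac{P^{(j)}M_{l,k}P^{(l)}}{h^{(l)}-h^{(j)}}$, and to match $-Z_{j,l,k}$ one needs $P^{(j)}M_{l,k}P^{(l)}=P^{(j)}M_{j,k}P^{(l)}$, i.e.\ the off-diagonal blocks of $M_{j,k}$ and $M_{l,k}$ agree. This identity should come from the relation $\sum_n P_n = \operatorname{Id} \bmod \Psi^{-\infty}$ (or, at the relevant order, from \eqref{theorem identity operator equation 1} together with the lower-order corrections built in), exactly as in the proofs of Lemmas~\ref{Construction of a single pseudodifferential projection lemma 1} and~\ref{basis of projections lemma 1}.

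Next I would plug \eqref{lemma commutation with the operator equation 1} into the left-hand side of \eqref{commutation with the operator equation 5}. Using the symmetry just verified, $Z_{j,l,k}+Z_{l,j,k}=Z_{j,l,k}-Z_{j,l,k}^{*}$, but it is cleaner to compute $(h^{(j)}-h^{(l)})(Z_{j,l,k}+Z_{l,j,k})$ term by term. With $Z_{j,l,k}=-\dfrac{P^{(j)}M_{j,k}P^{(l)}}{h^{(j)}-h^{(l)}}$ and $Z_{l,j,k}=-\dfrac{P^{(l)}M_{l,k}P^{(j)}}{h^{(l)}-h^{(j)}}=+\dfrac{P^{(l)}M_{l,k}P^{(j)}}{h^{(j)}-h^{(l)}}$, the summand becomes $-P^{(j)}M_{j,k}P^{(l)}+P^{(l)}M_{l,k}P^{(j)}$. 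Again invoking $P^{(j)}M_{l,k}P^{(l)}=P^{(j)}M_{j,k}P^{(l)}$ (and its adjoint $P^{(l)}M_{j,k}P^{(j)}=P^{(l)}M_{l,k}P^{(j)}$), one can replace $M_{l,k}$ by $M_{j,k}$ in the second term, so that $\sum_{l\ne j}(h^{(j)}-h^{(l)})[Z_{j,l,k}+Z_{l,j,k}] = -\sum_{l\ne j}\bigl(P^{(j)}M_{j,k}P^{(l)}+P^{(l)}M_{j,k}P^{(j)}\bigr)$. It then remains to identify this with $-M_{j,k}$, which reduces to showing that $M_{j,k}$ has no $P^{(j)}M_{j,k}P^{(j)}$ block and no blocks $P^{(l)}M_{j,k}P^{(n)}$ with $l,n\ne j$; equivalently $M_{j,k}=P^{(j)}M_{j,k}(I-P^{(j)})+(I-P^{(j)})M_{j,k}P^{(j)}$.

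Establishing this block structure of $M_{j,k}$ is the step I expect to be the main obstacle, and it is the analogue of the solvability computations in the proofs of the two preceding lemmas. The diagonal block $P^{(j)}M_{j,k}P^{(j)}$ should vanish because $P^{(j)}[A,\,\cdot\,]P^{(j)}$ applied to something that is idempotent and commutes with $A_{\mathrm{prin}}$ produces, at the relevant order, the commutator of $h^{(j)}$ (a scalar) with a matrix sandwiched by $P^{(j)}$, together with a term that cancels after using $[A,P_{j,k-1}+\widetilde R_{j,k}]\in\Psi^{-k-1+s}$ plus the projection identities; concretely, $P^{(j)}[A,P_{j,k-1}]P^{(j)} = [A, P^{(j)}P_{j,k-1}P^{(j)}]_{\mathrm{prin}} - \ldots = 0$ at order $k-s$ by the same telescoping trick used to prove \eqref{proof lemma 1 single projection equation 3}. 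The far-off-diagonal blocks $P^{(l)}M_{j,k}P^{(n)}$ with $l\ne j$, $n\ne j$, $l$ possibly equal to $n$, should vanish for a similar reason: $A_{\mathrm{prin}}$ acts as the scalar $h^{(l)}$ (resp.\ $h^{(n)}$) on the left (resp.\ right), and the surviving contributions are squeezed out using that $P_{j,k-1}$ already satisfies $P_{j,k-1}P_{n,k-1}=0\bmod\Psi^{-k}$ and $\sum_n P_{n,k-1}=\operatorname{Id}\bmod\Psi^{-k}$. I would carry this out by writing $M_{j,k}$ via \eqref{commutation with the operator equation 4}, inserting $\operatorname{Id}=\sum_n P_{n,k-1}\bmod\Psi^{-k}$ on both sides of the commutator, and matching principal symbols of order $k-s$, exactly mirroring the displayed chains of equalities in the proofs of Lemma~\ref{Construction of a single pseudodifferential projection lemma 1} and Lemma~\ref{basis of projections lemma 1}. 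Once the block structure of $M_{j,k}$ is in hand, both \eqref{commutation with the operator equation 5} and \eqref{commutation with the operator equation 6} follow by the bookkeeping above, completing the proof.
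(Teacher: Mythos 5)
There is a genuine error in your argument that cascades into sign mistakes. You claim that $M_{j,k}$ is Hermitian, but it is in fact skew-Hermitian, and the paper says so. Since $A$ is self-adjoint and $P_{j,k-1}+\widetilde R_{j,k}$ is symmetric modulo $\Psi^{-\infty}$, the commutator $[A,P_{j,k-1}+\widetilde R_{j,k}]$ is anti-self-adjoint modulo $\Psi^{-\infty}$, and the principal symbol of an anti-self-adjoint operator is skew-Hermitian: there is no sign flip coming from ``being the symbol of a commutator'' that would turn it Hermitian. This mistaken Hermiticity is what leads you to the cross-identity $P^{(j)}M_{l,k}P^{(l)}=P^{(j)}M_{j,k}P^{(l)}$ with a plus sign; the correct identity, which the paper proves in display (3.36), has a minus sign: $P^{(l)}M_{l,k}P^{(j)}=-P^{(l)}M_{j,k}P^{(j)}$ for $j\ne l$. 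Note that your own computation is then self-inconsistent: substituting your claimed $P^{(l)}M_{l,k}P^{(j)}=P^{(l)}M_{j,k}P^{(j)}$ into the summand $-P^{(j)}M_{j,k}P^{(l)}+P^{(l)}M_{l,k}P^{(j)}$ gives $-P^{(j)}M_{j,k}P^{(l)}+P^{(l)}M_{j,k}P^{(j)}$, not the $-\bigl(P^{(j)}M_{j,k}P^{(l)}+P^{(l)}M_{j,k}P^{(j)}\bigr)$ you then assert; only the minus-sign version makes the two expressions agree.

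Apart from this, your overall plan is the right one and matches the paper's: verify the two solvability conditions $P^{(j)}M_{j,k}P^{(j)}=0$ and $P^{(r)}M_{j,k}P^{(n)}=0$ for $r,n\ne j$ (hence the block structure $M_{j,k}=\sum_{l\ne j}\bigl(P^{(j)}M_{j,k}P^{(l)}+P^{(l)}M_{j,k}P^{(j)}\bigr)$), establish the cross-identity relating $M_{l,k}$ to $M_{j,k}$, and then substitute. However, you only sketch how to get these and explicitly flag the cross-identity as the step you expect to be the main obstacle, attributing it to $\sum_n P_n=\operatorname{Id}\bmod\Psi^{-\infty}$. In the paper it is instead derived by sandwiching $[A,P_{l,k-1}+\widetilde R_{l,k}]_{\mathrm{prin},k-s}$ between $(P_{l,k-1}+\widetilde R_{l,k})_{\mathrm{prin},0}$ and $(P_{j,k-1}+\widetilde R_{j,k})_{\mathrm{prin},0}$ and using the already-established relations $(P_{j,k-1}+\widetilde R_{j,k})^2=P_{j,k-1}+\widetilde R_{j,k}\bmod\Psi^{-k-1}$ and $(P_{j,k-1}+\widetilde R_{j,k})(P_{l,k-1}+\widetilde R_{l,k})=0\bmod\Psi^{-k-1}$ to telescope both $M_{l,k}$ and $M_{j,k}$ to the common expression $-\bigl[(P_{l,k-1}+\widetilde R_{l,k})A(P_{j,k-1}+\widetilde R_{j,k})\bigr]_{\mathrm{prin},k-s}$; this is what produces the minus sign you are missing. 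Once the skew-Hermiticity of $M_{j,k}$ and the sign in the cross-identity are corrected, the rest of your bookkeeping goes through.
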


\begin{proof}
Formula \eqref{lemma commutation with the operator equation 1} clearly satisfies \eqref{commutation with the operator equation 6}, because $M_{j,k}$ is skew-Hermitian.

It is easy to see that necessary solvability conditions are
\begin{equation}
\label{proof lemma commutation with the operator 1}
P^{(j)}M_{j,k}P^{(j)}=0
\end{equation}
and
\begin{equation}
\label{proof lemma commutation with the operator 2}
P^{(r)}M_{j,k}P^{(n)}=0 \quad\text{for}\quad r,n \ne j.
\end{equation}
Let us show that \eqref{proof lemma commutation with the operator 1} and \eqref{proof lemma commutation with the operator 2} are satisfied. We have
\begin{equation*}
\label{proof lemma commutation with the operator 3}
\begin{split}
P^{(j)}M_{j,k}P^{(j)}
&
=
P^{(j)}[A,P_{j,k-1}+\widetilde{R}_{j,k}]_\mathrm{prin,k-s}P^{(j)}
\\
&
=
[P_{j,k-1}+\widetilde{R}_{j,k}]_{\mathrm{prin},0}[A,P_{j,k-1}+\widetilde{R}_{j,k}]_\mathrm{prin,k-s}[P_{j,k-1}+\widetilde{R}_{j,k}]_{\mathrm{prin,0}}
\\
&
=
0.
\end{split}
\end{equation*}
In the above argument we used the fact that
\begin{equation}
\label{proof lemma commutation with the operator 8}
(P_{j,k-1}+\widetilde{R}_{j,k})^2=P_{j,k-1}+\widetilde{R}_{j,k} \mod \Psi^{-k-1},
\end{equation}
as established in subsection~\ref{Construction of a single pseudodifferential projection}.
%The identity \eqref{proof lemma commutation with the operator 2} is proved similarly.
Similarly, we have
\begin{equation*}
\label{proof lemma commutation with the operator 3bis}
\begin{split}
P^{(r)}M_{j,k}P^{(n)}
&
=
P^{(r)}[A,P_{j,k-1}+\widetilde{R}_{j,k}]_\mathrm{prin,k-s}P^{(r)}
\\
&
=
[P_{r,k-1}+\widetilde{R}_{r,k}]_{\mathrm{prin},0}[A,P_{j,k-1}+\widetilde{R}_{j,k}]_\mathrm{prin,k-s}[P_{n,k-1}+\widetilde{R}_{n,k}]_{\mathrm{prin,0}}
\\
&
=
0.
\end{split}
\end{equation*}
In the above argument we used the fact that
\begin{equation}
\label{proof lemma commutation with the operator 9}
(P_{j,k-1}+\widetilde{R}_{j,k})(P_{l,k-1}+\widetilde{R}_{l,k})=(P_{l,k-1}+\widetilde{R}_{l,k})(P_{j,k-1}+\widetilde{R}_{j,k})=0 \mod \Psi^{-k-1},
\end{equation}
as established in subsection~\ref{Construction of a basis of pseudodifferential projections}.

Formulae \eqref{proof lemma commutation with the operator 1} and \eqref{proof lemma commutation with the operator 2} imply
\begin{equation}
\label{proof lemma commutation with the operator 5}
M_{j,k}=\sum_{l\ne j}(P^{(j)}M_{j,k}P^{(l)}+P^{(l)}M_{j,k}P^{(j)}).
\end{equation}
Furthermore, the matrix-functions $M_{j,k}$ satisfy the identity
\begin{equation}
\label{proof lemma commutation with the operator 6}
P^{(l)}M_{l,k}P^{(j)}=-P^{(l)}M_{j,k}P^{(j)} \quad \text{for}\quad j\ne l.
\end{equation}
In fact, for $j\ne l$ we have
\begin{equation*}
\label{proof lemma commutation with the operator 7}
\begin{split}
P^{(l)}M_{l,k}P^{(j)}
&
=
P^{(l)}[A,P_{l,k-1}+\widetilde{R}_{l,k}]_\mathrm{prin,k-s}P^{(j)}
\\
&
=
[P_{l,k-1}+\widetilde{R}_{l,k}]_\mathrm{prin,0}[A(P_{l,k-1}+\widetilde{R}_{l,k})-(P_{l,k-1}+\widetilde{R}_{l,k})A]_\mathrm{prin,k-s}[P_{j,k-1}+\widetilde{R}_{j,k}]_\mathrm{prin,0}
\\
&
=-[(P_{l,k-1}+\widetilde{R}_{l,k})A(P_{j,k-1}+\widetilde{R}_{j,k}])]_\mathrm{prin,k-s}
\\
&
=
-[P_{l,k-1}+\widetilde{R}_{l,k}]_\mathrm{prin,0}[A(P_{j,k-1}+\widetilde{R}_{j,k})-(P_{j,k-1}+\widetilde{R}_{j,k})A]_\mathrm{prin,k-s}[P_{j,k-1}+\widetilde{R}_{j,k}]_\mathrm{prin,0}
\\
&
=
-P^{(l)}[A,P_{j,k-1}+\widetilde{R}_{j,k}]_\mathrm{prin,k-s}P^{(j)}
\\
&
=
-
P^{(l)}M_{j,k}P^{(j)}.
\end{split}
\end{equation*}
In the above argument we used \eqref{proof lemma commutation with the operator 8} and \eqref{proof lemma commutation with the operator 9}.

It remains only to substitute \eqref{lemma commutation with the operator equation 1} into \eqref{commutation with the operator equation 5} and use \eqref{proof lemma commutation with the operator 5}--\eqref{proof lemma commutation with the operator 6}.
\end{proof}

\begin{proposition}
\label{proposition about uniqueness}
The solution found in Lemma~\ref{lemma commutation with the operator} is the unique solution to \eqref{commutation with the operator equation 5}--\eqref{commutation with the operator equation 6}.
\end{proposition}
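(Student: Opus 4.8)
The plan is to show that the difference of any two solutions of \eqref{commutation with the operator equation 5}--\eqref{commutation with the operator equation 6} vanishes, i.e.\ that the homogeneous system associated with \eqref{commutation with the operator equation 5}--\eqref{commutation with the operator equation 6} has only the trivial solution. So suppose $\{Z_{j,l,k}\}_{l\ne j}$ satisfies
\begin{equation*}
\sum_{l\ne j}(h^{(j)}-h^{(l)})\bigl[Z_{j,l,k}+Z_{l,j,k}\bigr]=0\qquad\text{for every }j,
\end{equation*}
together with the structural constraints $Z_{j,l,k}=P^{(j)}Z_{j,l,k}P^{(l)}$ and $Z_{j,l,k}=-Z_{l,j,k}^*$. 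The goal is to conclude $Z_{j,l,k}=0$ for all $j\ne l$.

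First I would extract the individual ``block components'' of the homogeneous equation. Fix $j$ and an index $n\ne j$. Multiply the homogeneous equation on the left by $P^{(j)}$ and on the right by $P^{(n)}$. Using $\sum_l P^{(l)}=I$, the fact that the $P^{(l)}$ are mutually orthogonal rank-$1$ projections, and the property $Z_{j,l,k}=P^{(j)}Z_{j,l,k}P^{(l)}$ (so that $P^{(j)}Z_{j,l,k}P^{(n)}=\delta_{ln}Z_{j,l,k}$ and $P^{(j)}Z_{l,j,k}P^{(n)}=0$ since the left projector of $Z_{l,j,k}$ is $P^{(l)}\ne P^{(j)}$ for $l\ne j$, while for $l=n$ the right projector is $P^{(j)}\ne P^{(n)}$), every term on the left-hand side is killed except the one with $l=n$, yielding
\begin{equation*}
(h^{(j)}-h^{(n)})\,Z_{j,n,k}=0.
\end{equation*}
Since Assumption~\ref{assumption about eigenvalues of principal symbol being simple} guarantees that the eigenvalues are simple, $h^{(j)}-h^{(n)}\ne0$ whenever $j\ne n$, hence $Z_{j,n,k}=0$. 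As $j$ and $n\ne j$ were arbitrary, all $Z_{j,l,k}$ vanish, which is exactly what we needed.

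Therefore any two solutions of \eqref{commutation with the operator equation 5}--\eqref{commutation with the operator equation 6} differ by a family satisfying the homogeneous system, and the above shows that family is zero; so the solution of Lemma~\ref{lemma commutation with the operator} is unique. The only point requiring a little care --- and the place where Assumption~\ref{assumption about eigenvalues of principal symbol being simple} is essential --- is the bookkeeping showing that sandwiching the homogeneous relation between $P^{(j)}$ and $P^{(n)}$ isolates a single term; once the structural constraints \eqref{commutation with the operator equation 6} are used this is a short orthogonality computation, and the non-vanishing of $h^{(j)}-h^{(n)}$ finishes the argument.
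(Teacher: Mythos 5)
Your proposal is correct and follows essentially the same route as the paper's proof: reduce to the homogeneous system, sandwich it between $P^{(j)}$ on the left and $P^{(n)}$ on the right so that the structural constraints \eqref{commutation with the operator equation 6} isolate the single term $(h^{(j)}-h^{(n)})Z_{j,n,k}$, and conclude from the simplicity of the eigenvalues. The only difference is that you spell out the orthogonality bookkeeping that the paper leaves implicit.
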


\begin{proof}
It suffices to show that the homogeneous system does not admit nontrivial solutions.

Suppose $\widetilde{Z}_{j,l}$ is a solution of the homogeneous system
\begin{equation}
\label{proof theorem about uniqueness equation 1}
\sum_{l\ne j}(h^{(j)}-h^{(l)})[\widetilde{Z}_{j,l}+\widetilde{Z}_{l,j}]=0.
\end{equation}
Then, multiplying \eqref{proof theorem about uniqueness equation 1} by $P^{(j)}$ on the left and by $P^{(n)}$, $n\ne j$, on the right, we obtain
\begin{equation*}
\label{proof theorem about uniqueness equation 2}
(h^{(j)}-h^{(n)}) \widetilde{Z}_{j,n}=0.
\end{equation*}
\end{proof}

Combining Theorem~\ref{theorem basis of pseudodifferential projections}, Lemma~\ref{lemma commutation with the operator} and Proposition~\ref{proposition about uniqueness} we obtain Theorem~\ref{Main results theorem 1}.

\subsection{The algorithm}
\label{The algorithm}

Let us summarise the results from subsections~\ref{Construction of a single pseudodifferential projection}--\ref{Commutation with an elliptic operator} in the form of a concise algorithm for the construction of the full symbol of pseudodifferential projections.

\

\textbf{Step 1.} Given the $m$ eigenprojections $P^{(j)}(x,\xi)$ of $A_\mathrm{prin}\,$, choose $m$ arbitrary pseudo\-differential operators $P_{j,0}\in \Psi^0$ satisfying
\begin{enumerate}[(i)]
\item
$(P_{j,0})_\mathrm{prin}=P^{(j)}$,

\item 
$P_{j,0}=P_{j,0}^* \mod \Psi^{-\infty}$.
\end{enumerate}

\

\textbf{Step 2.} For $k=1,2,\ldots$ define
\begin{equation*}
\label{algorithm equation 1}
P_{j,k}:=P_{j,0}+\sum_{n=1}^k X_{j,n}, \qquad X_{j,n}\in \Psi^{-n}.
\end{equation*}

Assuming we have determined the pseudodifferential operator $P_{j,k-1}$, compute, one after the other, the following quantities:
\begin{enumerate}[(a)]
\item 
$
R_{j,k}=-((P_{j,k-1})^2-P_{j,k-1})_\mathrm{prin,k}\,,
$

\item
$
S_{j,k}=-R_{j,k}+P^{(j)}R_{j,k}+R_{j,k}P^{(j)},
$

\item
$
V_{j,l,k}=-\frac12  ( 
(P_{j,k-1}P_{l,k-1})_\mathrm{prin,k}
+P^{(j)}S_{l,k}+ S_{j,k}P^{(l)}
),
$

\item
$
Z_{j,l,k}=
(h^{(l)}-h^{(j)})^{-1}
P^{(j)}
\left(
[A\,,P_{j,k-1}]_\mathrm{prin,k-s}
+
[A_\mathrm{prin}\,, S_{j,k}+\sum_{n\ne j}(V_{j,n,k}+V_{n,j,k})]
\right)
P^{(l)},
$
\end{enumerate}
for $l\ne j$.

\

\textbf{Step 3.} Choose a pseudodifferential operator $X_{j,k}\in \Psi^{-k}$ satisfying
\begin{enumerate}[(i)]
\item
$
(X_{j,k})_\mathrm{prin}=S_{j,k}+\sum_{l\ne j}
(V_{j,l,k}+V_{l,j,k})+\sum_{l\ne j}(Z_{j,l,k}-Z_{l,j,k}),
$

\item
$
X_{j,k}=X_{j,k}^* \mod \Psi^{-\infty}.
$
\end{enumerate}

\

\textbf{Step 4.}
Put
\begin{equation*}
\label{algorithm equation 2}
P_j \sim P_{j,0}+\sum_{n=1}^{+\infty}X_{j,n}\,.
\end{equation*}

\subsection{Proof of Theorem {\ref{Main results theorem 1}}}
\label{Proof of Theorem 2.1}

\begin{proof}
In subsections~\ref{Construction of a single pseudodifferential projection}--\ref{Commutation with an elliptic operator} we established existence of our pseudodifferential projections. It remains to prove that they are unique.

Suppose there exist two sets of pseudodifferential projections, $P_j$ and $P_j'$, satisfying 
\eqref{definition of pseudodifferential projection condition 2}--\eqref{definition of pseudodifferential projection condition 1bis}
whose difference is not in $\Psi^{-\infty}$. Then there exists a natural number $k$ such that
\[
P_j-P_j'\in\Psi^{-k}\quad\forall j,
\]
but
\[
P_j-P_j'\not\in\Psi^{-k-1}\quad\text{for some}\quad j.
\]
Consider the pseudodifferential operators
\[
B_j:=P_j-P_j'\in\Psi^{-k}.
\]

The fact that the operators $P_j=B_j+P_j'$ satisfy \eqref{definition of pseudodifferential projection condition 2}--\eqref{definition of pseudodifferential projection condition 1bis} yields the following system of equations for $(B_j)_\mathrm{prin}$:
\begin{equation}
\label{uniqueness equation 1}
P^{(j)}(B_j)_\mathrm{prin}
+
(B_j)_\mathrm{prin}P^{(j)}
-
(B_j)_\mathrm{prin}
=0,
\end{equation}
\begin{equation}
\label{uniqueness equation 2}
(B_j)_\mathrm{prin}^*
=
(B_j)_\mathrm{prin},
\end{equation}
\begin{equation}
\label{uniqueness equation 3}
P^{(l)}(B_j)_\mathrm{prin}
+
(B_l)_\mathrm{prin}P^{(j)}
=0,\qquad l\ne j,
\end{equation}
\begin{equation}
\label{uniqueness equation 4}
A_\mathrm{prin}(B_j)_\mathrm{prin}
-
(B_j)_\mathrm{prin}A_\mathrm{prin}
=0.
\end{equation}

The matrix-function $(B_j)_\mathrm{prin}$ can be uniquely represented in the form
\begin{equation}
\label{uniqueness equation 5}
(B_j)_\mathrm{prin}
=
\sum_{l,n}B_{j,l,n},
\end{equation}
where the matrix-functions $B_{j,l,n}$ satisfy
\begin{equation}
\label{uniqueness equation 6}
B_{j,l,n}=P^{(l)}B_{j,l,n}P^{(n)}.
\end{equation}
Substituting \eqref{uniqueness equation 5} into \eqref{uniqueness equation 1} we immediately get
\begin{equation*}
\label{uniqueness equation 7}
B_{j,j,j}=0,
\end{equation*}
\begin{equation*}
\label{uniqueness equation 8}
B_{j,l,n}=0\quad\text{for}\quad l\ne j,\ n\ne j,
\end{equation*}
so that \eqref{uniqueness equation 5} can be equivalently recast as
\begin{equation}
\label{uniqueness equation 8bis}
(B_j)_\mathrm{prin}=\sum_{l\ne j}(B_{j,j,l}+B_{j,l,j}).
\end{equation}

Substituting
\eqref{uniqueness equation 8bis} into \eqref{uniqueness equation 4}
and taking into account
\eqref{uniqueness equation 6}
we get
\begin{equation*}
\label{uniqueness equation 14}
\sum_{l\ne j}
(h^{(j)}-h^{(l)})(B_{j,j,l}-B_{j,l,j})
=0.
\end{equation*}
But the latter only admits the trivial solution.
\end{proof}

\section{Commutation with an elliptic operator: revisited}
\label{Commutation with an elliptic operator: revisited Dima}

\subsection{An abstract theorem on pseudodifferential  projections}
\label{An abstract theorem on pseudodifferential  projections}

The argument presented in subsection~\ref{Proof of Theorem 2.1}
shows that conditions \eqref{definition of pseudodifferential projection condition 1bis}, \eqref{definition of pseudodifferential projection condition 2} and \eqref{commutation condition}
alone force uniqueness of our orthonormal pseudodifferential basis commuting with $A$. This observation
motivates us to formulate the following abstract result.

\begin{theorem}
\label{theorem minimal assumptions}
Suppose that we are given $m$ pseudodifferential operators $P_j\in\Psi^0$ satisfying conditions
\begin{equation}
\label{definition of pseudodifferential projection condition 2 with j}
P_j^2=P_j\mod\Psi^{-\infty}
\end{equation}
and
\eqref{commutation condition},
\eqref{definition of pseudodifferential projection condition 1bis}.
Then these operators satisfy
\begin{equation}
\label{definition of pseudodifferential projection condition 3 with j}
P_j^*=P_j\mod\Psi^{-\infty}
\end{equation}
and
\eqref{definition of system of pseudodifferential projections condition 1},
\eqref{definition of system of pseudodifferential projections condition 2}.
\end{theorem}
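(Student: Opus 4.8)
The plan is to show that the three hypotheses — idempotency \eqref{definition of pseudodifferential projection condition 2 with j}, commutation \eqref{commutation condition}, and the principal symbol condition \eqref{definition of pseudodifferential projection condition 1bis} — pin down the full symbols of the $P_j$ modulo $\Psi^{-\infty}$, and that these forced symbols coincide (modulo $\Psi^{-\infty}$) with those of the operators constructed in subsections~\ref{Construction of a single pseudodifferential projection}--\ref{Commutation with an elliptic operator}, which already satisfy \eqref{definition of pseudodifferential projection condition 3 with j}, \eqref{definition of system of pseudodifferential projections condition 1}, \eqref{definition of system of pseudodifferential projections condition 2}. So the heart of the matter is a uniqueness statement at the level of symbols, carried out recursively.

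First I would set up the induction. Write each $P_j$ in the form $P_{j,0}+\sum_{n\ge1}X_{j,n}$ with $X_{j,n}\in\Psi^{-n}$, as in Theorem~\ref{theorem basis of pseudodifferential projections}, and argue by induction on the smoothness order: assuming that modulo $\Psi^{-k}$ the operators $P_j$ agree with a choice of projections produced by the algorithm of subsection~\ref{The algorithm} (hence enjoy all the desired properties through that order), I would examine the order-$(-k)$ terms. The key step is to recover, from \eqref{definition of pseudodifferential projection condition 2 with j} and \eqref{commutation condition} alone, the algebraic system that $(X_{j,k})_\mathrm{prin}$ must satisfy, and to show this system has a unique solution — this is exactly the content extracted in subsection~\ref{Proof of Theorem 2.1}, where equations \eqref{uniqueness equation 1} and \eqref{uniqueness equation 4} alone (without invoking \eqref{definition of pseudodifferential projection condition 3} or \eqref{definition of system of pseudodifferential projections condition 1}) force $(B_j)_\mathrm{prin}=0$. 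Concretely: idempotency gives $P^{(j)}(X_{j,k})_\mathrm{prin}+(X_{j,k})_\mathrm{prin}P^{(j)}-(X_{j,k})_\mathrm{prin}$ is determined, which confines the undetermined part of $(X_{j,k})_\mathrm{prin}$ to the off-diagonal blocks $P^{(j)}(\cdot)P^{(l)}$ and $P^{(l)}(\cdot)P^{(j)}$, $l\ne j$; commutation with $A$ then, via the factor $h^{(j)}-h^{(l)}\ne0$ coming from Assumption~\ref{assumption about eigenvalues of principal symbol being simple}, fixes these blocks uniquely. Hence modulo $\Psi^{-k-1}$ the $P_j$ agree with the algorithmically constructed projections, closing the induction.

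Once the symbols are matched to all orders, the conclusions \eqref{definition of pseudodifferential projection condition 3 with j}, \eqref{definition of system of pseudodifferential projections condition 1}, \eqref{definition of system of pseudodifferential projections condition 2} transfer: the algorithmically constructed operators satisfy self-adjointness, mutual orthogonality and completeness modulo $\Psi^{-\infty}$ by Theorem~\ref{theorem basis of pseudodifferential projections} (completeness being automatic by Theorem~\ref{theorem identity operator}), and differing from them only by a $\Psi^{-\infty}$ term preserves all of these modulo $\Psi^{-\infty}$. For completeness \eqref{definition of system of pseudodifferential projections condition 2} one could alternatively invoke Theorem~\ref{theorem identity operator} directly, provided one has already established \eqref{definition of system of pseudodifferential projections condition 1} and the principal-symbol identity $\sum_j(P_j)_\mathrm{prin}=\sum_j P^{(j)}=I$, which is \eqref{principal symbols sum to identity}.

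I expect the main obstacle to be a bookkeeping one rather than a conceptual one: the hypotheses \eqref{definition of pseudodifferential projection condition 2 with j} and \eqref{commutation condition} do \emph{not} come with an \emph{a priori} self-adjointness assumption, so at each inductive step the terms $(X_{j,k})_\mathrm{prin}$ need not be Hermitian, and one must be careful that the algebraic system one writes down and solves is the correct one in this slightly more general setting — in particular that the solvability conditions (the analogues of \eqref{proof lemma 1 single projection equation 3} and \eqref{proof lemma commutation with the operator 1}--\eqref{proof lemma commutation with the operator 2}) still hold using only idempotency and commutation. The cleanest route is to first prove \eqref{definition of pseudodifferential projection condition 3 with j} recursively (idempotency plus commutation force the diagonal-block part of $(X_{j,k})_\mathrm{prin}$ to equal a Hermitian quantity, and the off-diagonal part, once pinned down by commutation, is Hermitian in the appropriate sense), and only then read off \eqref{definition of system of pseudodifferential projections condition 1}--\eqref{definition of system of pseudodifferential projections condition 2} by comparison with the algorithm.
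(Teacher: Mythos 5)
Your approach is correct, and indeed it is exactly the ``alternative proof'' the paper itself flags immediately after Theorem~\ref{theorem minimal assumptions}: compare the given $P_j$ with a fixed choice of operators produced by the algorithm of subsections~\ref{Construction of a single pseudodifferential projection}--\ref{Commutation with an elliptic operator}, run the uniqueness induction of subsection~\ref{Proof of Theorem 2.1}, conclude the two agree modulo $\Psi^{-\infty}$, and then import the properties \eqref{definition of pseudodifferential projection condition 3 with j}, \eqref{definition of system of pseudodifferential projections condition 1}, \eqref{definition of system of pseudodifferential projections condition 2} from the known-good operators.

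The paper, however, deliberately takes a different, shorter route: a direct, self-contained argument that bypasses the explicit construction entirely. For \eqref{definition of pseudodifferential projection condition 3 with j} one sets $B:=P_j-P_j^*$ and notes that, because $A$ is self-adjoint, $P_j^*$ also commutes with $A$ modulo $\Psi^{-\infty}$, and $(P_j^*)^2=P_j^*$ modulo $\Psi^{-\infty}$ with the same principal symbol $P^{(j)}$; from idempotency and commutation alone one extracts $P^{(j)}B_\mathrm{prin}+B_\mathrm{prin}P^{(j)}-B_\mathrm{prin}=0$ and $[A_\mathrm{prin},B_\mathrm{prin}]=0$, decomposes $B_\mathrm{prin}$ into blocks $P^{(l)}(\cdot)P^{(n)}$, and the nonvanishing of $h^{(j)}-h^{(l)}$ kills each block at every order, forcing $B\in\Psi^{-\infty}$. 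For \eqref{definition of system of pseudodifferential projections condition 1} the same scheme is applied to $C:=P_jP_l$, and \eqref{definition of system of pseudodifferential projections condition 2} then follows from Theorem~\ref{theorem identity operator}, exactly as you say. The trade-off: your route requires the existence theory as a black box and a careful inductive matching, whereas the paper's argument runs directly off the hypotheses and makes the mechanism (simplicity of eigenvalues forcing block-by-block vanishing) transparent.

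One small but useful clarification on the obstacle you flag. You worry that without an \emph{a priori} self-adjointness hypothesis the inductive symbol-matching may go wrong. In fact this worry dissolves once one observes that the forcing argument in subsection~\ref{Proof of Theorem 2.1} uses only \eqref{uniqueness equation 1} (from idempotency) and \eqref{uniqueness equation 4} (from commutation), not \eqref{uniqueness equation 2} or \eqref{uniqueness equation 3}; so the difference between the given $P_j$ and the constructed $\widetilde P_j$ is forced to vanish at each order without ever invoking Hermiticity. This is precisely what makes the paper's direct argument work, and it is also why your induction would close without first establishing \eqref{definition of pseudodifferential projection condition 3 with j}: self-adjointness is a conclusion, not an ingredient, of the uniqueness step.
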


\begin{proof}
To begin with, let us show that we have \eqref{definition of pseudodifferential projection condition 3 with j}. 
Suppose there exists a $j$ such that, for some natural $k$, $P_j-P_j^*\in \Psi^{-k}$ but
\begin{equation}
\label{proof theorem minimal assumptions equation 01}
P_j-P_j^*\not\in \Psi^{-k-1}.
\end{equation}
Put
\begin{equation}
\label{proof theorem minimal assumptions equation 02}
B:=P_j-P_j^*.
\end{equation}
Then conditions \eqref{definition of pseudodifferential projection condition 2 with j}, \eqref{commutation condition} and \eqref{definition of pseudodifferential projection condition 1bis} give us the following equations for $B_\mathrm{prin}$:
\begin{equation}
\label{proof theorem minimal assumptions equation 03}
B_\mathrm{prin}P^{(j)}+P^{(j)}B_\mathrm{prin}-B_\mathrm{prin}=0,
\end{equation}
\begin{equation}
\label{proof theorem minimal assumptions equation 04}
A_\mathrm{prin}B_\mathrm{prin}-B_\mathrm{prin}A_\mathrm{prin}=0.
\end{equation}
Formula \eqref{proof theorem minimal assumptions equation 03} implies 
\begin{equation*}
\label{proof theorem minimal assumptions equation 05}
P^{(j)}B_\mathrm{prin}P^{(j)}=0,
\end{equation*}
so that we have
\begin{equation}
\label{proof theorem minimal assumptions equation 06}
B_\mathrm{prin}=\sum_{l\ne j}(P^{(l)}B_\mathrm{prin}P^{(j)}+P^{(j)}B_\mathrm{prin}P^{(l)})\,.
\end{equation}
Substituting \eqref{proof theorem minimal assumptions equation 06} into \eqref{proof theorem minimal assumptions equation 04} we get
\begin{equation*}
\label{proof theorem minimal assumptions equation 07}
\sum_{l\ne j}(h^{(j)}-h^{(l)})(P^{(j)}B_\mathrm{prin}P^{(l)}-P^{(l)}B_\mathrm{prin}P^{(j)})=0,
\end{equation*}
which implies 
\begin{equation*}
\label{proof theorem minimal assumptions equation 08}
B_\mathrm{prin}=0.
\end{equation*}
But the latter contradicts \eqref{proof theorem minimal assumptions equation 01}.

Next, let us show that \eqref{definition of system of pseudodifferential projections condition 1} holds. Arguing by contradiction, suppose there exist $j$ and~$l$, $j\ne l$, such that $P_j$ and $P_l$ satisfy the assumptions of the theorem and, for some natural $k$, $P_j P_l \in \Psi^{-k}$ but
\begin{equation}
\label{proof theorem minimal assumptions equation 2}
P_j P_l \not \in \Psi^{-k-1}.
\end{equation}
Put $C:=P_j P_l$. Then conditions \eqref{definition of pseudodifferential projection condition 2 with j}, \eqref{commutation condition} and \eqref{definition of pseudodifferential projection condition 1bis} give us the following constraints for $C_\mathrm{prin}$:
\begin{equation}
\label{proof theorem minimal assumptions equation 3}
P^{(j)}C_\mathrm{prin}-C_\mathrm{prin}=0,
\end{equation}
\begin{equation}
\label{proof theorem minimal assumptions equation 4}
C_\mathrm{prin}P^{(l)}-C_\mathrm{prin}=0,
\end{equation}
\begin{equation}
\label{proof theorem minimal assumptions equation 5}
A_\mathrm{prin}C_\mathrm{prin}-C_\mathrm{prin}A_\mathrm{prin}=0.
\end{equation}
Formulae \eqref{proof theorem minimal assumptions equation 3} and \eqref{proof theorem minimal assumptions equation 4} imply
\begin{equation}
\label{proof theorem minimal assumptions equation 6}
C_\mathrm{prin}=P^{(j)}C_\mathrm{prin}P^{(l)}.
\end{equation}
Substituting \eqref{proof theorem minimal assumptions equation 6} into \eqref{proof theorem minimal assumptions equation 5} we obtain
\begin{equation*}
\label{proof theorem minimal assumptions equation 7}
(h^{(j)}-h^{(l)})C_\mathrm{prin}=0
\end{equation*}
which, in turn, yields
\begin{equation*}
\label{proof theorem minimal assumptions equation 8}
C_\mathrm{prin}=0.
\end{equation*}
The latter contradicts \eqref{proof theorem minimal assumptions equation 2}.

The fact that we have \eqref{definition of system of pseudodifferential projections condition 2} now follows from Theorem~\ref{theorem identity operator}.
\end{proof}

Note that an alternative proof of Theorem~\ref{theorem minimal assumptions}
can be obtained by arguing as in subsection~\ref{Proof of Theorem 2.1}
and using the constructive proof of the existence of projections $P_j$ provided in
subsections \ref{Construction of a single pseudodifferential projection}--\ref{Commutation with an elliptic operator}.
However, we decided to give here an abstract self-contained
proof which does not rely on the explicit construction of the $P_j$'s.

\subsection{Developing a simplified algorithm}
\label{Developing a simplified algorithm}

Theorem~\ref{theorem minimal assumptions} opens the way to the formulation of a new, shorter version of our algorithm. 

In this subsection we will show directly what was argued at the end of subsection~\ref{Proof of Theorem 2.1}, namely that a family of orthogonal pseudodifferential projections in the sense of Definition~\ref{definition of pseudodifferential projection} satisfying the commutation relation \eqref{commutation condition} is automatically an orthonormal pseudodifferential basis in the sense of Definition~\ref{definition of system of pseudodifferential projections}.

Suppose that, in accordance with subsection~\ref{Construction of a single pseudodifferential projection}, we have constructed $m$ orthogonal pseudodifferential projections of the form
\begin{equation}
\label{commutation revisited equation 1}
P_j\sim P_{j,0}+\sum_{k=1}^{+\infty} X_{j,k}
\end{equation}
where $(X_{j,k})_\mathrm{prin}$ is given by
\begin{equation}
\label{commutation revisited equation 3}
(X_{j,k})_\mathrm{prin}=-R_{j,k}+P^{(j)}R_{j,k}+R_{j,k}P^{(j)}+\sum_{l\ne j}(Y_{j,l,k}+Y_{j,l,k}^*),
\end{equation}
\begin{equation}
\label{commutation revisited equation 4}
Y_{j,l,k}=P^{(j)}Y_{j,l,k}P^{(l)},\qquad Y_{j,l,k}^*=P^{(l)}Y_{j,l,k}^*P^{(j)},
\end{equation}
cf.~\eqref{Construction of a single pseudodifferential projection equation 12} and \eqref{Construction of a single pseudodifferential projection equation 13}.

Choose pseudodifferential operators $\widetilde X_{j,k}\in \Psi^{-k}$, $\widetilde X_{j,k}=\widetilde X_{j,k}^* \mod \Psi^{-\infty}$, such that
\begin{equation}
\label{commutation revisited equation 5}
(\widetilde X_{j,k})_\mathrm{prin}=-R_{j,k}+P^{(j)}R_{j,k}+R_{j,k}P^{(j)}
\end{equation}
and put
\begin{equation}
\label{commutation revisited equation 6}
T_{j,k}:=-[A,P_{j,k-1}+\widetilde X_{j,k}]_{\mathrm{prin},k-s}.
\end{equation}
Then, in view of \eqref{commutation revisited equation 3} and \eqref{commutation revisited equation 4}, satisfying \eqref{commutation condition} reduces to solving the system of equations
\begin{equation}
\label{commutation revisited equation 7}
\sum_{l\ne j} (h^{(j)}-h^{(l)})(Y_{j,l,k}-Y_{j,l,k}^*)=T_{j,k}
\end{equation}
for $Y_{j,l,k}$.

\begin{lemma}
\label{lemma commutation revisited}
The unique solution to \eqref{commutation revisited equation 7} is given by
\begin{equation}
\label{lemma commutation revisited equation 1}
Y_{j,l,k}=\frac{P^{(j)}T_{j,k}P^{(l)}}{h^{(j)}-h^{(l)}}.
\end{equation}
\end{lemma}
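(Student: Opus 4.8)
The plan is to verify that formula \eqref{lemma commutation revisited equation 1} solves \eqref{commutation revisited equation 7} and then establish uniqueness, proceeding exactly as in the proof of Lemma~\ref{lemma commutation with the operator}. First I would check that the right-hand side $T_{j,k}$, which by \eqref{commutation revisited equation 6} is (up to sign) the $(k-s)$-principal symbol of a commutator of skew-adjoint-friendly objects, is skew-Hermitian; this follows because $P_{j,k-1}+\widetilde X_{j,k}$ is self-adjoint modulo $\Psi^{-\infty}$ and $A$ is self-adjoint, so $[A,P_{j,k-1}+\widetilde X_{j,k}]$ is skew-adjoint, whence $T_{j,k}^*=T_{j,k}$ in the sense that $(P^{(j)}T_{j,k}P^{(l)})^*=-P^{(l)}T_{j,k}P^{(j)}$. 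Consequently the candidate $Y_{j,l,k}$ in \eqref{lemma commutation revisited equation 1} automatically has the structural form $Y_{j,l,k}=P^{(j)}Y_{j,l,k}P^{(l)}$ required by \eqref{commutation revisited equation 4}, and $Y_{j,l,k}-Y_{j,l,k}^*$ can be assembled cleanly.

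Next I would establish the two necessary solvability conditions
\begin{equation}
\label{plan equation a}
P^{(j)}T_{j,k}P^{(j)}=0,\qquad P^{(r)}T_{j,k}P^{(n)}=0\quad\text{for }r,n\ne j,
\end{equation}
by sandwiching the commutator between spectral projections and using the approximate idempotency and orthogonality relations established in subsections~\ref{Construction of a single pseudodifferential projection} and~\ref{Construction of a basis of pseudodifferential projections}, namely $(P_{j,k-1}+\widetilde X_{j,k})^2=P_{j,k-1}+\widetilde X_{j,k}\bmod\Psi^{-k-1}$ and $(P_{j,k-1}+\widetilde X_{j,k})(P_{l,k-1}+\widetilde X_{l,k})=0\bmod\Psi^{-k-1}$. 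These are the analogues of \eqref{proof lemma commutation with the operator 1} and \eqref{proof lemma commutation with the operator 2}; the computation is the same sandwiching trick, with $P_{j,k-1}+\widetilde R_{j,k}$ replaced by $P_{j,k-1}+\widetilde X_{j,k}$. From \eqref{plan equation a} one gets the block decomposition $T_{j,k}=\sum_{l\ne j}(P^{(j)}T_{j,k}P^{(l)}+P^{(l)}T_{j,k}P^{(j)})$, and the cross-identity $P^{(l)}T_{l,k}P^{(j)}=-P^{(l)}T_{j,k}P^{(j)}$ for $j\ne l$ (the analogue of \eqref{proof lemma commutation with the operator 6}), again obtained by the same sandwiching argument using orthogonality. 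Substituting \eqref{lemma commutation revisited equation 1} into the left-hand side of \eqref{commutation revisited equation 7} and invoking these identities then yields $T_{j,k}$, confirming that the formula solves the system.

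Finally, uniqueness follows by the argument already seen in Proposition~\ref{proposition about uniqueness}: if $\widetilde Y_{j,l}$ solves the homogeneous system $\sum_{l\ne j}(h^{(j)}-h^{(l)})(\widetilde Y_{j,l}-\widetilde Y_{j,l}^*)=0$ with $\widetilde Y_{j,l}=P^{(j)}\widetilde Y_{j,l}P^{(l)}$, then multiplying on the left by $P^{(j)}$ and on the right by $P^{(n)}$ for $n\ne j$ extracts $(h^{(j)}-h^{(n)})\widetilde Y_{j,n}=0$, and since the eigenvalues are simple we conclude $\widetilde Y_{j,n}=0$. I expect the main obstacle to be purely bookkeeping: one must be careful that the refined principal-symbol notation $[\,\cdot\,]_{\mathrm{prin},k-s}$ is being applied to operators of the correct order (here the commutator lies in $\Psi^{-k+s}$ because $A\in\Psi^s$ and $P_{j,k-1}+\widetilde X_{j,k}$ fails to commute with $A$ only at order $-k$ in the projection's expansion), and that the self-adjointness/orthogonality relations are invoked at precisely the order at which they hold modulo $\Psi^{-k-1}$. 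There is no genuine analytic difficulty beyond what was already handled in Lemma~\ref{lemma commutation with the operator}; the point of this lemma is simply that the shorter route via \eqref{commutation revisited equation 5} bypasses the intermediate quantities $R_{j,l,k}$ and $\widetilde R_{j,k}$.
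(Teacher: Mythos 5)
Your overall strategy — verify solvability conditions, substitute the candidate, prove uniqueness — is correct and matches the paper's, and your uniqueness argument is fine. You also correctly flag that besides $P^{(j)}T_{j,k}P^{(j)}=0$ one must show $P^{(r)}T_{j,k}P^{(n)}=0$ for $r,n\ne j$ (a point the paper's own proof states only implicitly). However, your justification of this second condition contains a genuine error: you invoke the orthogonality relation $(P_{j,k-1}+\widetilde X_{j,k})(P_{l,k-1}+\widetilde X_{l,k})=0\bmod\Psi^{-k-1}$, but this is \emph{not} available here. Recall that in the simplified algorithm $\widetilde X_{j,k}$ carries only the idempotency correction (its principal symbol is $-R_{j,k}+P^{(j)}R_{j,k}+R_{j,k}P^{(j)}$); it does \emph{not} include the orthonormality corrections which in the original algorithm were bundled into $\widetilde R_{j,k}$. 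Orthogonality of the $P_{j,k}$ at order $-k$ is proved only \emph{after} Lemma~\ref{lemma commutation revisited}, for the full $P_{j,k}=P_{j,k-1}+X_{j,k}$ including the $Y$-terms (this is the content of \eqref{commutation revisited equation 8}--\eqref{commutation revisited equation 12}), so treating it as a premise here is circular. If one tries the sandwich $[Q_r[A,Q_j]Q_n]_{\mathrm{prin},k-s}$ with $Q_j:=P_{j,k-1}+\widetilde X_{j,k}$ one cannot discard $Q_rAQ_jQ_n$ and $Q_rQ_jAQ_n$, since $Q_rQ_j,Q_jQ_n$ lie only in $\Psi^{-k}$, not $\Psi^{-k-1}$.

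The condition $P^{(r)}T_{j,k}P^{(n)}=0$ for $r,n\ne j$ is nevertheless true, but it must be derived from idempotency alone. For example, sandwich with $\operatorname{Id}-Q_j$ on both sides: since $Q_j(\operatorname{Id}-Q_j)$ and $(\operatorname{Id}-Q_j)Q_j$ lie in $\Psi^{-k-1}$, both $(\operatorname{Id}-Q_j)AQ_j(\operatorname{Id}-Q_j)$ and $(\operatorname{Id}-Q_j)Q_jA(\operatorname{Id}-Q_j)$ lie in $\Psi^{s-k-1}$, hence $(I-P^{(j)})T_{j,k}(I-P^{(j)})=0$, which is exactly the needed block vanishing. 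Finally, the cross-identity you propose (the analogue of \eqref{proof lemma commutation with the operator 6}) is not needed here: it was required in Lemma~\ref{lemma commutation with the operator} because that system couples $Z_{j,l,k}$ with $Z_{l,j,k}$, whereas \eqref{commutation revisited equation 7} involves only $Y_{j,l,k}$ and $Y_{j,l,k}^*$ and therefore decouples in $j$.
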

\begin{proof}
It is easy to see that necessary solvability conditions are
\begin{equation}
\label{proof lemma commutation revisited equation 1}
P^{(j)}T_{j,k}P^{(j)}=0,
\end{equation}
\begin{equation}
\label{proof lemma commutation revisited equation 2}
T_{j,k}^*=-T_{j,k}.
\end{equation}
Condition \eqref{proof lemma commutation revisited equation 2} is clearly satisfied. Let us check that the same is true for \eqref{proof lemma commutation revisited equation 1}. We have
\begin{equation*}
\label{proof lemma commutation revisited equation 3}
\begin{split}
P^{(j)}T_{j,k}P^{(j)}
&
=
-P^{(j)}[A,P_{j,k-1}+\widetilde X_{j,k}]_{\mathrm{prin},k-s} P^{(j)}
\\
&
=
-(P_{j,k-1}+\widetilde X_{j,k})_\mathrm{prin,0}[A,P_{j,k-1}+\widetilde X_{j,k}]_{\mathrm{prin},k-s} (P_{j,k-1}+\widetilde X_{j,k})_\mathrm{prin,0}
\\
&
=
0.
\end{split}
\end{equation*}
In the last step of the above calculation we used the fact that
\begin{equation}
\label{proof lemma commutation revisited equation 4}
(P_{j,k-1}+\widetilde X_{j,k})^2=P_{j,k-1}+\widetilde X_{j,k} \mod \Psi^{-k-1},
\end{equation}
which was established in subsection~\ref{Construction of a single pseudodifferential projection}.

Formula \eqref{proof lemma commutation revisited equation 1} implies
\begin{equation}
\label{proof lemma commutation revisited equation 5}
T_{j,k}=\sum_{l\ne j} (P^{(j)}T_{j,k}P^{(l)}+P^{(l)}T_{j,k}P^{(j)}).
\end{equation}

By substituting \eqref{lemma commutation revisited equation 1} into \eqref{commutation revisited equation 7} and taking into account \eqref{proof lemma commutation revisited equation 2} and \eqref{proof lemma commutation revisited equation 5}
one shows that \eqref{lemma commutation revisited equation 1} is a solution. 

To complete the proof it remains only to observe that the homogeneous system
\begin{equation*}
\sum_{l\ne j} (h^{(j)}-h^{(l)})(Y_{j,l,k}-Y_{j,l,k}^*)=0
\end{equation*}
complemented with \eqref{commutation revisited equation 4} admits only the trivial solution.
\end{proof}

Lemma~\ref{lemma commutation revisited} completely determines $({X}_{j,k})_\mathrm{prin}$. Let us show that for $j\ne l$ we automatically have
\begin{equation}
\label{commutation revisited equation 8}
P_{j,k}P_{l,k} \in \Psi^{-k-1}.
\end{equation}
In view of our recursive construction, condition \eqref{commutation revisited equation 8} can be equivalently rewritten as
%\begin{multline}
%\label{commutation revisited equation 9}
%P^{(j)}\sum_{n\ne l}\frac{P^{(l)}\widehat M_{j,k}P^{(n)}-P^{(n)}\widehat M_{l,k}P^{(l)}}{h^{(l)}-h^{(n)}}+\sum_{n\ne j}\frac{P^{(j)}\widehat M_{j,k}P^{(n)}-P^{(n)}\widehat M_{j,k}P^{(j)}}{h^{(j)}-h^{(n)}}P^{(l)}
%\\
%=-((P_{j,k-1}+\widetilde{X}_{j,k})(P_{l,k-1}+\widetilde{X}_{l,k}))_\mathrm{prin,k},
%\end{multline}
%\begin{equation}
%\label{commutation revisited equation 10}
%-\frac{P^{(j)}\widehat M_{l,k}P^{(l)}}{h^{(l)}-h^{(j)}}+\frac{P^{(j)}\widehat M_{j,k}P^{(l)}}{h^{(j)}-h^{(l)}}=-((P_{j,k-1}+\widetilde{X}_{j,k})(P_{l,k-1}+\widetilde{X}_{l,k}))_\mathrm{prin,k},
%\end{equation}
\begin{equation}
\label{commutation revisited equation 11}
\frac{P^{(j)}(T_{l,k}+T_{j,k})P^{(l)}}{h^{(j)}-h^{(l)}}=-((P_{j,k-1}+\widetilde{X}_{j,k})(P_{l,k-1}+\widetilde{X}_{l,k}))_\mathrm{prin,k}.
\end{equation}

In view of \eqref{proof lemma commutation revisited equation 4}, we have
\begin{equation}
\label{commutation revisited equation 12}
\begin{split}
P^{(j)}(T_{l,k}+T_{j,k})P^{(l)}
=
&
-(P_{j,k-1}+\widetilde X_{j,k})_\mathrm{prin,0}[A,P_{l,k-1}+\widetilde X_{l,k}]_{\mathrm{prin},k-s}(P_{l,k-1}+\widetilde X_{l,k})_\mathrm{prin,0}
\\
&
-(P_{j,k-1}+\widetilde X_{j,k})_\mathrm{prin,0}[A,P_{j,k-1}+\widetilde X_{j,k}]_{\mathrm{prin},k-s}(P_{l,k-1}+\widetilde X_{l,k})_\mathrm{prin,0}
\\
=
&
-P^{(j)}[A,(P_{j,k-1}+\widetilde X_{j,k})(P_{l,k-1}+\widetilde X_{l,k})]_\mathrm{prin,k-s}P^{(l)}
\\
=
&
-P^{(j)}[A_\mathrm{prin},((P_{j,k-1}+\widetilde X_{j,k})(P_{l,k-1}+\widetilde X_{l,k}))_\mathrm{prin,k}]P^{(l)}
\\
=
&
-(h^{(j)}-h^{(l)})((P_{j,k-1}+\widetilde X_{j,k})(P_{l,k-1}+\widetilde X_{l,k}))_\mathrm{prin,k}.
\end{split}
\end{equation}
Substituting \eqref{commutation revisited equation 12} into \eqref{commutation revisited equation 11}, we see that \eqref{commutation revisited equation 11} is satisfied.

The fact that the operators \eqref{commutation revisited equation 1}, \eqref{commutation revisited equation 3}, \eqref{lemma commutation with the operator equation 1} satisfy \eqref{definition of system of pseudodifferential projections condition 2} now follows from Theorem~\ref{theorem identity operator}.

\subsection{The simplified algorithm}
\label{The simplified algorithm}

The construction from the previous subsection can be summarised as follows.

\

\textbf{Step 1.} Given the $m$ eigenprojections $P^{(j)}$ of $A_\mathrm{prin}\,$, choose $m$ arbitrary pseudo\-differential operators $P_{j,0}\in \Psi^0$ satisfying $(P_{j,0})_\mathrm{prin}=P^{(j)}$.

\

\textbf{Step 2.} For $k=1,2,\ldots$ define
\begin{equation}
\label{simplified algorithm equation 1}
P_{j,k}:=P_{j,0}+\sum_{n=1}^k X_{j,n}, \qquad X_{j,n}\in \Psi^{-n}.
\end{equation}

Assuming we have determined the pseudodifferential operator $P_{j,k-1}$, compute, one after the other, the following quantities:
\begin{enumerate}[(a)]
\item 
$
R_{j,k}:=-((P_{j,k-1})^2-P_{j,k-1})_\mathrm{prin,k}\,,
$

\item 
$
S_{j,k}:=-R_{j,k}+P^{(j)}R_{j,k}+R_{j,k}P^{(j)}\,,
$

\item
$
T_{j,k}:=[P_{j,k-1},A]_{\mathrm{prin},k-s}+[S_{j,k},A_\mathrm{prin}].
$
\end{enumerate}

\

\textbf{Step 3.} Choose a pseudodifferential operator $X_{j,k}\in \Psi^{-k}$ satisfying
\begin{equation}
\label{revised algorithm crucial formula}
(X_{j,k})_\mathrm{prin}=S_{j,k}+\sum_{l\ne j}\frac{P^{(j)}T_{j,k}P^{(l)}-P^{(l)}T_{j,k}P^{(j)}}{h^{(j)}-h^{(l)}}.
\end{equation}

\

\textbf{Step 4.}
Put
\begin{equation}
\label{simplified algorithm equation 2}
P_j \sim P_{j,0}+\sum_{n=1}^{+\infty}X_{j,n}\,.
\end{equation}

\begin{remark}
We now have at our disposal two versions of the construction algorithm: the one just given above and that presented in subsection~\ref{The algorithm}. Each has its own advantages. 

The one given here is more concise and requires relatively simple calculations to obtain the final formulae. We will have an opportunity to appreciate this in Section~\ref{Subprincipal symbol of pseudodifferential projections}, when we will compute the subprincipal symbol of pseudodifferential projections. 

Our original algorithm given in subsection~\ref{The algorithm}, despite being longer and entailing more complicated calculations, provides more refined information. On the one hand, it allows one to single out 
\begin{enumerate}[(i)]

\item
contributions to the symbols making $P_j$ an orthogonal projection (the $S_{j,k}$'s),

\item
those responsible for the orthonormality condition (the $V_{j,l,k}$'s) and 

\item
those ensuring the commutation with $A$ (the $Z_{j,l,k}$'s).
\end{enumerate}
On the other hand, it allows one to keep track of how many degrees of freedom are left in the symbol at each stage of the construction algorithm, at the same time shedding some light on how the available degrees of freedom are used up.
This makes the first version of the algorithm more suitable if one is only interested in constructing a single pseudodifferential projection or a pseudodifferential basis, without necessarily relating the $P_j$'s to an elliptic operator $A$.

\end{remark}

\section{Subprincipal symbol of pseudodifferential projections}
\label{Subprincipal symbol of pseudodifferential projections}

In this section we will carry out the first iteration of the above algorithm explicitly, to obtain a closed formula for the subprincipal symbol of our projections and prove Theorem~\ref{Main results theorem 2}.

Let us choose the initial operators $P_{j,0}\in \Psi^0$ satisfying the additional property
\begin{equation}
\label{subprincipal symbol of projections equation 1}
(P_{j,0})_\mathrm{sub}=0.
\end{equation}
Then \eqref{subprincipal symbol of projections equation 1} and \eqref{simplified algorithm equation 2} imply
\begin{equation*}
(P_j)_\mathrm{sub}=(X_{j,1})_\mathrm{prin}.
\end{equation*}
The task at hand reduces to computing $(X_{j,1})_\mathrm{prin}$.

Recall that the
formula for the subprincipal of a composition of pseudodifferential operators reads  \cite[Eqn.~(1.4)]{DuGu}
\begin{equation}
\label{subprincipal symbol composition}
[BC]_\mathrm{sub}=
B_\mathrm{prin}C_\mathrm{sub}
+
B_\mathrm{sub}C_\mathrm{prin}
+\frac i2
\{B_\mathrm{prin},C_\mathrm{prin}\}.
\end{equation}
Note that \cite{DuGu} adopts the opposite sign convention for the Poisson bracket.

Using \eqref{subprincipal symbol of projections equation 1} and \eqref{subprincipal symbol composition}, we obtain
\begin{equation}
\label{subprincipal symbol of projections equation 2}
\begin{split}
R_{j,1}
&
=
-\frac{i}{2}\{P^{(j)}, P^{(j)}\}.
\end{split}
\end{equation}
Formula \eqref{subprincipal symbol of projections equation 2} gives us
\begin{equation}
\label{subprincipal symbol of projections equation 3}
S_{j,1}=\frac{i}{2}(\{P^{(j)}, P^{(j)}\}-P^{(j)}\{P^{(j)}, P^{(j)}\}-\{P^{(j)}, P^{(j)}\}P^{(j)}).
\end{equation}

In view of \eqref{subprincipal symbol composition} and with account of \eqref{subprincipal symbol of projections equation 1}, we have
\begin{equation}
\label{subprincipal symbol of projections equation 4}
\begin{split}
[P_{j,0},A]_{1-s}
&
=
(P_{j,0}A-AP_{j,0})_\mathrm{sub}
\\
&
=
P^{(j)}A_\mathrm{sub}+\frac{i}2\{P^{(j)},A_\mathrm{prin}\}-A_\mathrm{sub}P^{(j)}-\frac{i}2\{A_\mathrm{prin},P^{(j)}\}
\\
&
=
[P^{(j)},A_\mathrm{sub}] +\frac i2(\{P^{(j)},A_\mathrm{prin}\}-\{A_\mathrm{prin},P^{(j)}\}).
\end{split}
\end{equation}

Observe that
\begin{equation*}
\label{subprincipal symbol of projections equation 5}
P^{(j)}\{P^{(j)}, P^{(j)}\}=\{P^{(j)}, P^{(j)}\}P^{(j)}=P^{(j)}\{P^{(j)}, P^{(j)}\}P^{(j)}.
\end{equation*}
Therefore, by means of \eqref{subprincipal symbol of projections equation 3}, we obtain
\begin{equation}
\label{subprincipal symbol of projections equation 6}
\begin{split}
[S_{j,1},A_\mathrm{prin}]
&
=
\frac{i}2[\{P^{(j)}, P^{(j)}\}-P^{(j)}\{P^{(j)}, P^{(j)}\}-\{P^{(j)}, P^{(j)}\}P^{(j)},A_\mathrm{prin}]
\\
&
=
\frac i2(\{P^{(j)}, P^{(j)}\}A_\mathrm{prin}-A_\mathrm{prin}\{P^{(j)}, P^{(j)}\})\,.
\end{split}
\end{equation}

By combining \eqref{subprincipal symbol of projections equation 4} and \eqref{subprincipal symbol of projections equation 6} we get
\begin{equation*}
\label{subprincipal symbol of projections equation 7}
\begin{split}
T_{j,1}
&
=
\frac i2(\{P^{(j)},A_\mathrm{prin}\}-\{A_\mathrm{prin},P^{(j)}\}+[\{P^{(j)}, P^{(j)}\},A_\mathrm{prin}])
+[P^{(j)},A_\mathrm{sub}].
\end{split}
\end{equation*}
We can then compute
\begin{equation}
\label{subprincipal symbol of projections equation 8}
\begin{split}
P^{(j)}T_{j,1}P^{(l)}
&
=
\frac i2(P^{(j)}\{P^{(j)},A_\mathrm{prin}\}P^{(l)}-P^{(j)}\{A_\mathrm{prin},P^{(j)}\}P^{(l)})
\\
&
+\frac i2(h^{(l)}-h^{(j)})P^{(j)}\{P^{(j)}, P^{(j)}\}P^{(l)}
+
P^{(j)}A_\mathrm{sub}P^{(l)}
\\
&
=
\frac i2(P^{(j)}\{P^{(j)},A_\mathrm{prin}\}P^{(l)}-P^{(j)}\{A_\mathrm{prin},P^{(j)}\}P^{(l)})
+
P^{(j)}A_\mathrm{sub}P^{(l)}.
\end{split}
\end{equation}
In the above calculation we used that 
\begin{equation*}
\label{subprincipal symbol of projections equation 9}
P^{(j)}\{P^{(j)}, P^{(j)}\}P^{(l)}=0 \quad \text{for}\quad j\ne l.
\end{equation*}

Similarly, we have
\begin{equation}
\label{subprincipal symbol of projections equation 10}
\begin{split}
P^{(l)}T_{j,1}P^{(j)}
&
=
\frac i2(P^{(l)}\{P^{(j)},A_\mathrm{prin}\}P^{(j)}-P^{(l)}\{A_\mathrm{prin},P^{(j)}\}P^{(l)})
\\
&
+\frac i2(h^{(j)}-h^{(l)})P^{(l)}\{P^{(j)}, P^{(j)}\}P^{(j)}
-
P^{(l)}A_\mathrm{sub}P^{(j)}
\\
&
=
\frac i2(P^{(l)}\{P^{(j)},A_\mathrm{prin}\}P^{(j)}-P^{(l)}\{A_\mathrm{prin},P^{(j)}\}P^{(j)})
-
P^{(l)}A_\mathrm{sub}P^{(j)}\,.
\end{split}
\end{equation}

Substituting \eqref{subprincipal symbol of projections equation 3}, \eqref{subprincipal symbol of projections equation 8} and \eqref{subprincipal symbol of projections equation 10} into \eqref{revised algorithm crucial formula} we arrive at
\begin{equation*}
\label{subprincipal symbol of projections equation 11}
\begin{split}
(X_{j,1})_\mathrm{prin}
&
=
\frac i2 \{P^{(j)},P^{(j)}\}-i \,P^{(j)}\{P^{(j)},P^{(j)}\}P^{(j)}
\\
&
+
\frac i2
\sum_{l\ne j}
\frac
{
P^{(j)}\{P^{(j)},A_\mathrm{prin}\}P^{(l)}
-
P^{(l)}\{P^{(j)},A_\mathrm{prin}\}P^{(j)}
}
{
h^{(j)}-h^{(l)}
}
\\
&
+
\frac i2
\sum_{l\ne j}
\frac
{
P^{(l)}\{A_\mathrm{prin},P^{(j)}\}P^{(j)}-P^{(j)}\{A_\mathrm{prin},P^{(j)}\}P^{(l)})
}
{
h^{(j)}-h^{(l)}
}
\\
&
+
\sum_{l\ne j}
\frac
{P^{(l)}A_\mathrm{sub}P^{(j)}+P^{(j)}A_\mathrm{sub}P^{(l)}}
{h^{(j)}-h^{(l)}}\,,
\end{split}
\end{equation*}
which completes the proof of Theorem~\ref{Main results theorem 2}.

\section{A positivity result}
\label{A positivity result}

One of the most useful properties of pseudodifferential projections is that they can be used to construct sign definite operators (modulo $\Psi^{-\infty}$) out of $A$. A rigorous formulation of this statement is provided by Theorem~\ref{Main results theorem 3}, whose proof is given below.

\begin{proof}[Proof of Theorem~\ref{Main results theorem 3}]

For definiteness, will prove the first part of the theorem, namely, formula \eqref{Main results theorem 3 equation 1}. Formula \eqref{Main results theorem 3 equation 2} is proved similarly.

Suppose $j\in\{1,\ldots, m^+\}$ and put
\begin{equation*}
\label{19 July 2020 equation 1}
\widetilde A:=A-2\sum_{k=1}^{m^-}P_{-k}^*AP_{-k}\,.
\end{equation*}
Then
\begin{equation}
\label{19 July 2020 equation 2}
P_j^*AP_j=P_j^*\widetilde AP_j+C,
\end{equation}
where $C\in\Psi^{-\infty}$ is the symmetric operator given by the explicit formula
\begin{equation*}
\label{19 July 2020 equation 3}
C=2\sum_{k=1}^{m^-}P_j^*P_{-k}^*AP_{-k}P_j\,.
\end{equation*}

The operator $\widetilde A$ is elliptic, self-adjoint and semibounded below.
Let $\widetilde\lambda_l$,
\linebreak
$l=1,\dots ,n$, be its negative eigenvalues and $\widetilde\Pi_l$ the corresponding
eigen\-projections.
Put
\begin{equation*}
\label{19 July 2020 equation 4}
\widehat A:=\widetilde A-\sum_{l=1}^n\widetilde\lambda_l\,\widetilde\Pi_l\,.
\end{equation*}
Then
\begin{equation}
\label{19 July 2020 equation 5}
\widehat A\ge0.
\end{equation}

We have
\begin{equation}
\label{19 July 2020 equation 6}
P_j^*\widetilde AP_j=P_j^*\widehat AP_j+D
\end{equation}
where $D\in\Psi^{-\infty}$ is the symmetric operator given by the explicit formula
\begin{equation*}
\label{19 July 2020 equation 7}
D=\sum_{l=1}^n\widetilde\lambda_lP_j^*\widetilde\Pi_lP_j\,.
\end{equation*}

Formula \eqref{19 July 2020 equation 5} implies
\begin{equation}
\label{19 July 2020 equation 8}
P_j^*\widehat AP_j\ge0.
\end{equation}
Combining formulae
\eqref{19 July 2020 equation 2},
\eqref{19 July 2020 equation 6}
and
\eqref{19 July 2020 equation 8}, we get
\begin{equation*}
\label{19 July 2020 equation 9}
P_j^*AP_j\ge C+D\in\Psi^{-\infty}.
\end{equation*}
\end{proof}

Theorem~\ref{Main results theorem 3} will turn out to be quite useful in obtaining spectral-theoretic results in~\cite{part2}.

\section{Modulus and Heaviside function of an elliptic system}
\label{Modulus and Heaviside function of an elliptic system}

In this section we represent the modulus and the Heaviside function of a self-adjoint elliptic matrix pseudodifferential operator $A\in \Psi^s$, $s>0$, in terms of pseudodifferential projections.

It is well-known that $|A|$ and $\theta(A)$ are pseudodifferential operators of order $s$ and $0$, respectively, see, e.g., \cite[Sec.~2]{asymm2}. Seeley's calculus \cite{seeley} allows one, in principle, to compute locally the symbol of $|A|$ and $\theta(A)$ in terms of the symbol of the resolvent $(A-\lambda \,\mathrm{Id})^{-1}$. Carrying out such calculations involves dealing with pseudodifferential operators depending on a parameter, which makes it impractical to push the calculations beyond the very first few terms. In fact, we are unaware of any explicit formulae for $|A|_\mathrm{sub}$ or $[\theta(A)]_\mathrm{sub}$. 

It is worth mentioning that an abstract analysis of the subprincipal symbol of elliptic operators of Laplace and Dirac type acting in vector bundles was performed in \cite{strohmaier}. The Heaviside function of an elliptic system is mentioned in \cite{strohmaier}, though the authors stop short of computing its subprincipal symbol.

Our contribution to the study of the operators $|A|$ and $\theta(A)$ is to establish, via Theorems~\ref{Main results theorem 4} and~\ref{Main results theorem 5}, a relation between such operators and pseudodifferential projections. This yields, in turn, in view of subsection~\ref{The simplified algorithm}, an explicit algorithm for the calculation of the full symbol of $|A|$ and $\theta(A)$, arguably simpler and more straightforward than Seeley's. Note that, in particular, our approach does not involve either complex analysis or pseudodifferential operators depending on a parameter.

\begin{proof}[Proof of Theorem~\ref{Main results theorem 4}]
We have $|A|=\sqrt{A^2}$, so the fact that $|A|$ is a pseudodifferential operator from the class
$\Psi^s$ follows from \cite{seeley}; see also \cite[\S9--\S11]{shubin} for a more detailed exposition.
The basic construction presented in \cite{seeley, shubin} requires the operator $A^2$ to be strictly positive, but
this assumption is not necessarily satisfied for our operator $A$. However, one can deal with this issue as
follows. Suppose that zero is an eigenvalue of $A$ and denote
by
\begin{equation}
\label{eigenprojection onto the kernel of A}
\Pi_0:=\sum_{k:\ \lambda_k=0}\langle v_k,\,\cdot\,\rangle\,v_k
\end{equation}
the eigenprojection onto the kernel of $A$. Then one can define the modulus of $A$ via the identity
$
|A|=\sqrt{A^2+\Pi_0}-\Pi_0
$
which only involves extracting the square root of a strictly positive operator.

In order to prove \eqref{Main results theorem 4 equation 1}, let us argue by contradiction. Suppose there exists a positive integer $k$ such that
\begin{equation}
\label{proof main theorem 4 equation 1}
|A|=\sum_{j=1}^{m^+}AP_j-\sum_{j=1}^{m^-}AP_{-j}-B,
\end{equation}
where 
\begin{equation}
\label{proof main theorem 4 equation 1bis}
B\in \Psi^{s-k}
\quad
\text{but} 
\quad
B\not \in \Psi^{s-k-1}.
\end{equation} 
Then $B_\mathrm{prin}$ has to satisfy
\begin{equation}
\label{proof main theorem 4 equation 2}
|A|_\mathrm{prin} B_\mathrm{prin}+B_\mathrm{prin}|A|_\mathrm{prin}=0,
\end{equation}
because
\begin{equation*}
\left(\sum_{j=1}^{m^+}AP_j-\sum_{j=1}^{m^-}AP_{-j}\right)^2=A^2 \mod \Psi^{-\infty}.
\end{equation*}

Since
%\begin{equation}
$
|A|_\mathrm{prin}=\sum_j |h^{(j)}| P^{(j)},
$
%\end{equation}
multiplying \eqref{proof main theorem 4 equation 2} by $P^{(n)}$ on the left and by $P^{(l)}$ on the right we obtain
\begin{equation*}
(|h^{(n)}|+|h^{(l)}|)\,P^{(n)} B_\mathrm{prin} P^{(l)}=0,
\end{equation*}
which implies $P^{(n)}B_\mathrm{prin}P^{(l)}=0$ for every $n$ and $l$, i.e.~$B_\mathrm{prin}=0$. But this contradicts~\eqref{proof main theorem 4 equation 1bis}.

Formula \eqref{Main results theorem 4 equation 2} follows from
formula \eqref{Main results theorem 4 equation 1}
and
Theorem~\ref{Main results theorem 2}.
Alternatively, it can be derived from the identity $|A|^2=A^2$.
\end{proof}

\begin{proof}[Proof of Theorem~\ref{Main results theorem 5}]
Let
\begin{equation*}
\label{definition of pseudoinverse of A}
B:=\sum_{k:\, \lambda_k\ne 0} \frac{1}{\lambda_k}\,\langle v_k,\,\cdot\,\rangle\,v_k
\end{equation*}
be the pseudoinverse of $A$ \cite[Chapter 2 Section 2]{rellich}.
Then $AB=BA=\operatorname{Id}-\Pi_0$, where $\Pi_0$ is defined by
\eqref{eigenprojection onto the kernel of A}.
Thus, $B\in\Psi^{-s}$ is a parametrix for $A$.

We have
\[
\theta(A)=\frac{\operatorname{Id}+B|A|-\Pi_0}{2}\,,
\]
and formula \eqref{Main results theorem 5 equation 1} follows now from
Theorem~\ref{Main results theorem 4} and formula
\eqref{definition of system of pseudodifferential projections condition 2}.
\end{proof}

\section{Applications}
\label{Applications}

In this section we discuss some applications of the above results. The most important application --- the partition of the spectrum of a positive order pseudodifferential system --- will be the subject of a separate paper \cite{part2},  where, among other things, results from \cite{CDV, wave, lorentzian, dirac} will be refined and improved.

Throughout this section we adopt Einstein's summation convention over repeated indices.

\subsection{Massless Dirac operator}
\label{Massless Dirac operator}

Let $(M,g)$ be a closed connected orientable and oriented Riemannian 3-manifold. We denote by $\nabla$ the Levi-Civita connection, by $\Gamma^\alpha{}_{\beta\gamma}$ the Christoffel symbols, by $\rho(x):=\sqrt{g_{\alpha\beta}(x)}$ the Riemannian density and by
\begin{equation*}
\label{Pauli matrices basic}
s^1:=
\begin{pmatrix}
0&1\\
1&0
\end{pmatrix}
=s_1
\,,
\quad
s^2:=
\begin{pmatrix}
0&-i\\
i&0
\end{pmatrix}
=s_2
\,,
\quad
s^3:=
\begin{pmatrix}
1&0\\
0&-1
\end{pmatrix}
=s_3
\end{equation*}
the standard Pauli matrices.

Let $e_j$, $j=1,2,3$, be a positively oriented orthonormal global framing. In chosen local coordinates $x^\alpha$, $\alpha=1,2,3$, we denote by $e_j{}^\alpha$ the $\alpha$-th component of the $j$-th vector field. 

The massless Dirac operator $W:H^1(M) \to L^2(M)$ acting on 2-columns of complex-valued half-densities  is the differential operator defined by
\begin{equation}
\label{definition massless Dirac equation}
W:=-i\sigma^\alpha
\left(
\frac\partial{\partial x^\alpha}
+\frac14\sigma_\beta
\left(
\frac{\partial\sigma^\beta}{\partial x^\alpha}
+\Gamma^\beta{}_{\alpha\gamma}\,\sigma^\gamma
\right)
-\frac12 \Gamma^\beta{}_{\alpha\beta}
\right),
\end{equation}
where
\begin{equation*}
\sigma^\alpha(x):=\sum_{j=1}^3 s^j\,e_j{}^\alpha(x).
\end{equation*}

The goal of this subsection is to compute $|W|_\mathrm{sub}$.
Note that $[\theta(W)]_\mathrm{sub}$ was calculated in \cite[subsection 5.2]{dirac}.

Let us begin by observing that a global framing $e_j$, $j=1,2,3$, defines a curvature-free metric compatible affine connection $\nabla^W$ on $M$, known as \emph{Weitzenb\"ock connection}, whose connection coefficients $\Upsilon^\alpha{}_{\beta\gamma}$ read
\begin{equation*}
\label{weitzenbock connection coefficients definition}
\Upsilon^\alpha{}_{\beta\gamma}=- \,e^j{}_\gamma  \dfrac{\partial e_j{}^\alpha}{\partial x^\beta}= e_j{}^\alpha  \dfrac{\partial e^j{}_\gamma}{\partial x^\beta}\,.
\end{equation*}
Here
\begin{equation}
\label{definition of coframe}
e^j{}_\alpha:=\delta^{jk}\,g_{\alpha\beta}\,e_k{}^\beta
\end{equation}
and $\delta^{jk}$ is the Kronecker symbol. We refer the reader to \cite[Appendix~A]{dirac} and references therein for further details.

The \emph{contorsion} of $\nabla^W$ is defined to be the (1,2)-tensor
\begin{equation}
\label{contorsion definition}
K^{\alpha}{}_{\beta\gamma}:=\Upsilon^\alpha{}_{\beta\gamma}-\Gamma^\alpha{}_{\beta\gamma}.
\end{equation}
Contorsion can be expressed in terms of the --- perhaps more familiar --- torsion of the affine connection $\nabla^W$, see~\cite[Equations~(A.3) and~(A.5)]{dirac}, and is equivalent to it. Working with contorsion, as opposed to torsion, is just a matter of convenience.

Lowering the first index in \eqref{contorsion definition} by means of the Riemannian metric $g$, we obtain a (0,3)-tensor antisymmetric in the first and third indices, $K_{\alpha\beta\gamma}=-K_{\gamma\beta\alpha}$. Because in dimension 3 antisymmetric tensors of order two are equivalent to vectors, instead of working with contorsion \eqref{contorsion definition} we can equivalently work with
\begin{equation}
\overset{*}{K}_{\alpha\beta}:=\frac12 K^\mu{}_\alpha{}^\nu \,E_{\mu\nu\beta},
\end{equation}
where 
\begin{equation}
\label{definition of E}
E_{\alpha\beta\gamma}(x):=\rho(x)\, \varepsilon_{\alpha\beta\gamma}
\end{equation}
and $\varepsilon$ is the totally antisymmetric symbol, $\varepsilon_{123}=+1$.

From \eqref{definition massless Dirac equation} it is easy to see that
\begin{equation}
\label{principal symbol dirac}
W_\mathrm{prin}(x,\xi)=\sigma^\alpha \xi_\alpha.
\end{equation}
By direct computation one can establish that the eigenvalues of $W_\mathrm{prin}$ are simple and read $h^{(\pm 1)}=\pm h$, where
\begin{equation}
\label{hamiltonian}
h(x,\xi):=\sqrt{g^{\alpha\beta}(x) \xi_\alpha\xi_\beta}.
\end{equation}

Combining \cite[Lemma~6.1]{jst_part_b} with identities from \cite[Appendix~A]{dirac} one can show that
\begin{equation}
\label{subprincipal symbol dirac}
W_\mathrm{sub}(x)=-\frac12 \overset{*}{K}{}^\alpha{}_\alpha(x)\,I\,.
\end{equation}

\begin{theorem}
\label{theorem subprincipal symbol |W|}
The subprincipal symbol of the modulus of the massless Dirac operator $|W|$ is given by
\begin{equation}
\label{theorem subprincipal symbol |W| equation 1}
|W|_\mathrm{sub}=-\frac 1{2h}\overset{*}{K}{}^\alpha{}_\beta \,\xi_\alpha\, (W_\mathrm{prin})_{\xi_\beta}.
\end{equation}
\end{theorem}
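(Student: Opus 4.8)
The plan is to obtain \eqref{theorem subprincipal symbol |W| equation 1} by specialising the general formula \eqref{Main results theorem 4 equation 2} of Theorem~\ref{Main results theorem 4} to the case $A=W$, $m=2$, and then rewriting the outcome in differential-geometric terms by means of the Pauli algebra and the Weitzenb\"ock connection.

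First I would carry out the specialisation. For $m=2$ the indices run over $j,k\in\{1,-1\}$, with $h^{(\pm1)}=\pm h$ and hence $|h^{(j)}|+|h^{(k)}|=2h$ for all $j,k$. In the first sum of \eqref{Main results theorem 4 equation 2} the coefficient $\frac{h^{(j)}+h^{(k)}}{|h^{(j)}|+|h^{(k)}|}$ vanishes for $j\ne k$ and equals $\pm1$ for $j=k=\pm1$; since by \eqref{subprincipal symbol dirac} $W_\mathrm{sub}=-\frac12\overset{*}{K}{}^\alpha{}_\alpha\,I$ is a scalar multiple of the identity, this sum reduces to $-\frac12\overset{*}{K}{}^\alpha{}_\alpha(P^{(1)}-P^{(-1)})=-\frac1{2h}\overset{*}{K}{}^\alpha{}_\alpha\,W_\mathrm{prin}$, where we used $W_\mathrm{prin}=h\,(P^{(1)}-P^{(-1)})$, which follows from \eqref{principal symbol in terms of eigenvalues and eigenprojections}. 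For the second sum, $|W|_\mathrm{prin}=h\,I$ is scalar, so $\{|W|_\mathrm{prin},|W|_\mathrm{prin}\}=0$, and collapsing the double sum by means of \eqref{principal symbols sum to identity} leaves $\frac{i}{4h}\{W_\mathrm{prin},W_\mathrm{prin}\}$. Thus
\begin{equation*}
|W|_\mathrm{sub}=-\frac1{2h}\overset{*}{K}{}^\alpha{}_\alpha\,W_\mathrm{prin}+\frac{i}{4h}\{W_\mathrm{prin},W_\mathrm{prin}\}.
\end{equation*}

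Next I would compute the matrix-valued Poisson bracket $\{W_\mathrm{prin},W_\mathrm{prin}\}=\{\sigma^\alpha\xi_\alpha,\sigma^\beta\xi_\beta\}$. By \eqref{poisson brackets} it equals $\xi_\alpha\bigl((\partial_\gamma\sigma^\alpha)\sigma^\gamma-\sigma^\gamma(\partial_\gamma\sigma^\alpha)\bigr)$. The two ingredients needed are the identity $\partial_\gamma\sigma^\alpha=-\Upsilon^\alpha{}_{\gamma\delta}\sigma^\delta$, immediate from the definition of the Weitzenb\"ock connection coefficients $\Upsilon$, and the standard Pauli identity, which in the notation at hand reads $\sigma^\gamma\sigma^\delta=g^{\gamma\delta}I+iE^{\gamma\delta}{}_\mu\sigma^\mu$, so that $[\sigma^\gamma,\sigma^\delta]=2iE^{\gamma\delta}{}_\mu\sigma^\mu$. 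Since the Levi-Civita part $\Gamma^\alpha{}_{\gamma\delta}$ of $\Upsilon^\alpha{}_{\gamma\delta}$ is symmetric in $\gamma,\delta$ it is killed upon contraction with $E^{\gamma\delta}{}_\mu$, leaving only the contorsion \eqref{contorsion definition}: $\{W_\mathrm{prin},W_\mathrm{prin}\}=2i\,\xi_\alpha K^\alpha{}_{\gamma\delta}E^{\gamma\delta}{}_\mu\sigma^\mu$. Feeding this back and using $\sigma^\mu=(W_\mathrm{prin})_{\xi_\mu}$, the two contributions combine into $|W|_\mathrm{sub}=-\frac1{2h}\bigl(\overset{*}{K}{}^\rho{}_\rho\,\delta^\alpha_\mu+K^\alpha{}_{\gamma\delta}E^{\gamma\delta}{}_\mu\bigr)\xi_\alpha\,(W_\mathrm{prin})_{\xi_\mu}$.

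Finally, it remains to establish the tensorial identity
\begin{equation*}
\overset{*}{K}{}^\rho{}_\rho\,\delta^\alpha_\mu+K^\alpha{}_{\gamma\delta}E^{\gamma\delta}{}_\mu=\overset{*}{K}{}^\alpha{}_\mu,
\end{equation*}
which, once in place, yields \eqref{theorem subprincipal symbol |W| equation 1} immediately. This is the only genuinely computational step: one expresses $\overset{*}{K}$ through $K$ via its definition and \eqref{definition of E}, decomposes $K^\alpha{}_{\gamma\delta}$ using the antisymmetry $K_{\alpha\beta\gamma}=-K_{\gamma\beta\alpha}$, and invokes the contraction identities for the Levi-Civita tensor in three dimensions (notably $E^{\gamma\delta\mu}E_{\gamma\delta\nu}=2\delta^\mu_\nu$ and $E^{\alpha\beta\gamma}E_{\mu\nu\gamma}=\delta^\alpha_\mu\delta^\beta_\nu-\delta^\alpha_\nu\delta^\beta_\mu$) together with the torsion/contorsion relations recorded in \cite[Appendix~A]{dirac}. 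The main obstacle is precisely this bookkeeping: the antisymmetrisation produced by the Pauli commutator acts on the second and third slots of the contorsion, whereas the one built into the definition of $\overset{*}{K}$ acts on the first and third, so some care is required to recover exactly the trace term $\overset{*}{K}{}^\rho{}_\rho\,\delta^\alpha_\mu$ with the correct sign. Everything preceding this step is a routine specialisation of Theorem~\ref{Main results theorem 4}.
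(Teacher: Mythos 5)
Your proposal follows the paper up to the reduction
\begin{equation*}
|W|_\mathrm{sub}=-\tfrac1{2h}\,\overset{*}{K}{}^\rho{}_\rho\,W_\mathrm{prin}+\tfrac{i}{4h}\{W_\mathrm{prin},W_\mathrm{prin}\},
\end{equation*}
but then takes a genuinely different route to evaluate $\{W_\mathrm{prin},W_\mathrm{prin}\}$. The paper's proof passes to the Levi-Civita framing generated at the chosen point $y$ (for which the bracket vanishes there), relates the two framings by an $SU(2)$-valued gauge matrix $G$, and imports the explicit formula $G_{x^\alpha}(y)=-\tfrac{i}{2}\overset{*}{K}_{\alpha\beta}(y)\sigma^\beta(y)$ from \cite{dirac}. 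You instead expand $\{W_\mathrm{prin},W_\mathrm{prin}\}=\xi_\alpha[\partial_\gamma\sigma^\alpha,\sigma^\gamma]$ directly, substitute the Weitzenb\"ock relation $\partial_\gamma\sigma^\alpha=-\Upsilon^\alpha{}_{\gamma\delta}\sigma^\delta$, and invoke the Pauli commutator $[\sigma^\gamma,\sigma^\delta]=2iE^{\gamma\delta}{}_\mu\sigma^\mu$; the symmetric Levi-Civita part of $\Upsilon$ is killed by $E$ and only contorsion survives. This is more self-contained --- it sidesteps the auxiliary framing and the gauge formulae from \cite{dirac} --- at the cost of the closing tensorial identity, which you correctly state but do not verify. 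The identity does hold: from the inverse relation $K^\rho{}_\alpha{}^\sigma=\overset{*}{K}_{\alpha\beta}E^{\rho\sigma\beta}$ and the contraction $E^{\alpha\nu\beta}E^{\gamma}{}_\nu{}^{\kappa}=g^{\alpha\gamma}g^{\beta\kappa}-g^{\beta\gamma}g^{\alpha\kappa}$ one gets $K^\alpha{}_{\gamma\delta}E^{\gamma\delta}{}_\mu=\overset{*}{K}{}^\alpha{}_\mu-\overset{*}{K}{}^\rho{}_\rho\,\delta^\alpha_\mu$, which is exactly what your argument requires. So the outline is sound and arrives at the correct formula; to be a complete proof the last contraction should be written out rather than merely described.
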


\begin{proof}
When $A$ is chosen to be the massless Dirac operator $W$, formula \eqref{Main results theorem 4 equation 1} from Theorem~\ref{Main results theorem 4} can be simplified to read
\begin{multline}
\label{proof theorem subprincipal symbol |W| equation 1}
|W|_\mathrm{sub}
=
\sum_{j=\pm1}
\operatorname{sgn}(h^{(j)})
P^{(j)}W_\mathrm{sub}P^{(j)}
+
\frac i{4h}
\bigl(
\{W_\mathrm{prin},W_\mathrm{prin}\}
-
\{|W|_\mathrm{prin},|W|_\mathrm{prin}\}
\bigr).
\end{multline}
The task at hand is to compute the RHS of \eqref{proof theorem subprincipal symbol |W| equation 1}.

In view of \eqref{subprincipal symbol dirac}, the first term on the RHS of \eqref{proof theorem subprincipal symbol |W| equation 1} can be evaluated as
\begin{equation}
\label{proof theorem subprincipal symbol |W| equation 2}
\begin{split}
\sum_{j=\pm1}
\operatorname{sgn}(h^{(j)})
P^{(j)}W_\mathrm{sub}P^{(j)}
&
= 
-\frac12 \overset{*}{K}{}^\alpha{}_\alpha \,(P^{(1)}-P^{(-1)})
\\
&
=
-\frac1{2h} \overset{*}{K}{}^\alpha{}_\alpha\, W_\mathrm{prin}.
\end{split}
\end{equation}

Direct calculations involving \eqref{principal symbol dirac} and \eqref{poisson brackets} give us
\begin{equation}
\label{proof theorem subprincipal symbol |W| equation 3}
\{W_\mathrm{prin},W_\mathrm{prin}\}
=
[\sigma^\alpha_{x^\mu} , \sigma^\mu]\,\xi_\alpha.
\end{equation}
Let us choose a point $y\in M$. Let $\{\widetilde e_j\}_{j=1}^3$ be the Levi-Civita framing generated by $\{e_j\}_{j=1}^3$ at $y$ defined in accordance with \cite[Definition~7.1]{dirac}, and let $\widetilde{W}$ be the massless Dirac operator associated with the latter. Then, formula \eqref{proof theorem subprincipal symbol |W| equation 3} and \cite[Corollary~7.3]{dirac} imply
\begin{equation}
\label{proof theorem subprincipal symbol |W| equation 4}
\{\widetilde W_\mathrm{prin},\widetilde W_\mathrm{prin}\}=0.
\end{equation}
Now, there exists a smooth matrix-function $G:M \to SU(2)$ such that the framings $\{e_j\}_{j=1}^3$ and $\{\widetilde e_j\}_{j=1}^3$ are related in accordance with
\begin{equation*}
\label{proof theorem subprincipal symbol |W| equation 5}
{e}_j{}^\alpha(x)=\frac12 \operatorname{tr}(s_j\,G^*(x)\,s^k\,G(x))\,\widetilde{e}_k{}^\alpha(x), \qquad G(y)=\mathrm{Id}.
\end{equation*}
It is easy to see that the corresponding Dirac operators and their principal symbols satisfy
\begin{equation}
\label{proof theorem subprincipal symbol |W| equation 6}
W=G^* \,\widetilde{W} \, G \qquad \text{and}\qquad W_\mathrm{prin}=G^* \, \widetilde W_\mathrm{prin} \,G,
\end{equation}
respectively.

Substituting \eqref{proof theorem subprincipal symbol |W| equation 6} into 
\eqref{proof theorem subprincipal symbol |W| equation 3} and using \eqref{proof theorem subprincipal symbol |W| equation 4} we obtain
\begin{equation}
\label{proof theorem subprincipal symbol |W| equation 7}
\begin{split}
\{W_\mathrm{prin},W_\mathrm{prin}\}(y)
%&
%=
%\left. \{G^*\tilde W_\mathrm{prin}G,G^*\tilde W_\mathrm{prin}G\}\right|_{y}
%\\
%&
%=
%\left. (G^*\tilde{W}_\mathrm{prin}G)_{x^\alpha} \,\sigma^\alpha - \sigma^\alpha (G^*\tilde{W}_\mathrm{prin}G)_{x^\alpha}\right|_y
%\\
&
=
\left. [G^*_{x^\alpha}\widetilde{W}_\mathrm{prin}+\widetilde{W}_\mathrm{prin}G_{x^\alpha}, \sigma^\alpha]\right|_{x=y}.
\end{split}
\end{equation}
Formula \cite[Eqn.~(7.53)]{dirac} tells us that 
\begin{equation}
\label{proof theorem subprincipal symbol |W| equation 8}
G_{x^\alpha}(y)=-\frac i2 \overset{*}{K}{}_{\alpha\beta}(y) \,\sigma^\beta(y), 
\qquad
G_{x^\alpha}^*(y)=\frac i2 \overset{*}{K}{}_{\alpha\beta}(y) \,\sigma^\beta(y).
\end{equation}
Substituting \eqref{proof theorem subprincipal symbol |W| equation 8} into \eqref{proof theorem subprincipal symbol |W| equation 7} and using elementary properties of Pauli matrices we get
\begin{equation}
\label{proof theorem subprincipal symbol |W| equation 9}
\{W_\mathrm{prin},W_\mathrm{prin}\}(y)
%&
%=
%\frac{i}{2}[\overset{*}{K}{}_{\alpha\beta} \,\sigma^\beta \sigma^\mu \xi_\mu-\sigma^\mu \xi_\mu \overset{*}{K}{}_{\alpha\beta} \,\sigma^\beta, \sigma^\alpha]
%\\
%&
%=
%\frac{i}{2} \overset{*}{K}{}_{\alpha\beta} \,\xi_\mu  \,[[\sigma^\beta, \sigma^\mu],\sigma^\alpha]
%\\
%&
%=
%-\overset{*}{K}{}_{\alpha\beta} \,\xi_\mu  \epsilon^{\beta\mu}{}_\nu \,[\sigma^\nu,\sigma^\alpha]
%\\
%&
%=
%-2i\overset{*}{K}{}_{\alpha\beta} \,\xi_\mu \,\epsilon^{\beta\mu}{}_\nu \,\epsilon^{\nu\alpha}{}_\gamma \,\sigma^\gamma
%\\
%&
%=
%-2i\overset{*}{K}{}_{\alpha\beta} \,\xi_\mu \,(\delta^{\beta\alpha}\delta^{\mu}{}_\gamma-\delta^{\beta}{}_\gamma \delta^{\mu\alpha}) \,\sigma^\gamma
%\\
=
-2i \left. \bigl(\overset{*}{K}{}^\alpha{}_\alpha \,W_\mathrm{prin} - \overset{*}{K}{}^\alpha{}_\beta \,\xi_\alpha\, \sigma^\beta\bigr)\right|_{x=y}. 
\end{equation}

We are left with computing $\{|W|_\mathrm{sub},|W|_\mathrm{sub}\}$. It follows from formula \eqref{principal symbol dirac} that
\begin{equation*}
\label{proof theorem subprincipal symbol |W| equation 10}
|W|_\mathrm{prin}=|W_\mathrm{prin}|=h\,I\,,
\end{equation*}
which, in turn, immediately implies
\begin{equation}
\label{proof theorem subprincipal symbol |W| equation 11}
\{|W|_\mathrm{prin},|W|_\mathrm{prin}\}=\{h,h\}\,I=0.
\end{equation}

After observing that the point $y$ chosen above is arbitrary, it only remains to substitute \eqref{proof theorem subprincipal symbol |W| equation 2}, \eqref{proof theorem subprincipal symbol |W| equation 9} and \eqref{proof theorem subprincipal symbol |W| equation 11} into \eqref{proof theorem subprincipal symbol |W| equation 1}.
\end{proof}

It is instructive to compare formulae
\eqref{theorem subprincipal symbol |W| equation 1}
and
\eqref{subprincipal symbol dirac}:
we see that $|W|_\mathrm{sub}$ is trace-free
whereas $W_\mathrm{sub}$ is pure trace, i.e.~proportional to the identity matrix.

%\begin{remark}
%Formula \eqref{theorem subprincipal symbol |W| equation 1}
%tells us that the subprincipal symbol of $|W|$ is trace-free.
%This indicates that, in a sense, $|W|$ has a simpler structure as an operator than $W$ itself.
%Observe that the principal symbols of $|W|$ and $W^2$ are both proportional to the identity matrix;
%this observation allows one to establish a relatively simple relationship between 
%the subprincipal symbols of the operators $|W|$ and $W^2$,
%as well as between their spectra.
%Note that the operator $W^2$ is a semibounded differential operator of even order
%and the spectrum of such operators was examined at an early stage of the development of
%the spectral theory of elliptic systems on manifolds \cite{dima_old}.
%\end{remark}

\

Let us specialise further to the case of the 3-sphere, 
\[
M=\mathbb{S}^3:=\{\mathbf{x}\in \mathbb{R}^4 \,|\, \|\mathbf{x}\|=1\},
\] 
equipped with the standard round metric $g_{\mathbb{S}^3}$, with orientation prescribed in accordance with \cite[Appendix~A]{sphere}. If we choose our positively oriented framing $e_j$, $j=1,2,3$, to be the restriction to $\mathbb{S}^3$ of the vector fields in $\mathbb{R}^4$
\begin{equation*}
\label{31 January 2019 formula 8}
\begin{aligned}
&\mathbf{e}_{1}:=-\mathbf{x}^4 \dfrac{\partial}{\partial \mathbf{x}^1}- \mathbf{x}^3\frac{\partial}{\partial \mathbf{x}^2} + \mathbf{x}^2\dfrac{\partial}{\partial \mathbf{x}^3}+\mathbf{x}^1\dfrac{\partial}{\partial \mathbf{x}^4},\\
&\mathbf{e}_{2}:=
+\mathbf{x}^3\dfrac{\partial}{\partial \mathbf{x}^1}-\mathbf{x}^4 \frac{\partial}{\partial \mathbf{x}^2}- \mathbf{x}^1 \dfrac{\partial}{\partial \mathbf{x}^3}+\mathbf{x}^2\dfrac{\partial}{\partial \mathbf{x}^4},\\
&\mathbf{e}_{3}:=
- \mathbf{x}^2\dfrac{\partial}{\partial \mathbf{x}^1}+ \mathbf{x}^1\frac{\partial}{\partial \mathbf{x}^2} -\mathbf{x}^4 \dfrac{\partial}{\partial \mathbf{x}^3}+\mathbf{x}^3\dfrac{\partial}{\partial \mathbf{x}^4},
\end{aligned}
\end{equation*}
then we obtain
\begin{equation*}
\label{einstein framing plus}
\overset{*}{K}=-g_{\mathbb{S}^3}\,.
\end{equation*}
Therefore, Theorem~\ref{theorem subprincipal symbol |W|} yields
\begin{equation}
\label{subprincipal modulus W on 3-sphere}
|W_{\mathbb{S}^3}|_\mathrm{sub}=\frac{1}{2h} (W_{\mathbb{S}^3})_\mathrm{prin}\,.
\end{equation}

\subsection{Elasticity operator}
\label{Elasticity operator}

Let $(M,g)$ be a Riemannian manifold without boundary.
Consider a diffeomorphism $\varphi:M\to M$ which is sufficiently close to the identity, so that it can be represented in
terms of a vector field of displacements $v$, see \cite[formula (4.1)]{diffeo}.
Let $h:=\varphi^*g$ be the pullback of $g$ via $\varphi$. We define the \emph{strain tensor}
to be
\begin{equation}
\label{definition of the strain tensor}
\mathcal{S}^\alpha{}_\beta:=
\frac12(g^{\alpha\gamma}h_{\gamma\beta}-\delta^\alpha{}_\beta).
\end{equation}
Note that in \cite{diffeo} the factor $\tfrac12$ was dropped for the sake of convenience,
but in the current paper we stick with the more traditional definition \eqref{definition of the strain tensor},
see also \cite[formulae (1.2) and (1.3)]{MR0106584}.

Following \cite{diffeo}, let us introduce the scalar invariants $e_k(\varphi)$, $k=1,\dots,d$,
where $e_k(\varphi)$ is the $k$-th elementary symmetric polynomial in the eigenvalues
of $\mathcal{S}$ viewed as a linear operator in the tangent fibre.
In particular, we have
%\begin{subequations}\label{scalar invariants}
\begin{align}
\label{scalar invariant 1}
e_1(\varphi)&:=
\operatorname{tr}\mathcal{S},
\\
\label{scalar invariant 2}
e_2(\varphi)&:=
\frac12
\left[
(\operatorname{tr}\mathcal{S})^2-\operatorname{tr}(\mathcal{S}^2)
\right],
\end{align}
%\end{subequations}
where $\,\operatorname{tr}\,$ is the matrix trace.

The theory of linear elasticity is based on the following two assumptions.
\begin{enumerate}[(i)]
\item
The potential energy of elastic deformation is quadratic (homogeneous of degree two) in strain.
\item
The strain tensor has been linearised in displacements $v$.
\end{enumerate}

Under the above assumptions the potential energy of elastic deformation reads
\begin{equation}
\label{assumption 1 formula}
E(v)
=
\int_M
(a(e_1)^2+be_2)\,\rho\,dx
\end{equation}
and formula \eqref{definition of the strain tensor} becomes
\begin{equation}
\label{assumption 2 formula}
\mathcal{S}^\alpha{}_\beta=\frac12(\nabla^\alpha v_\beta+\nabla_\beta v^\alpha).
\end{equation}
The operator of linear elasticity $L$, acting on vector fields,  is defined via
\begin{equation}
\label{definition of operator of linear elasticity}
\frac12\int_M
v^\alpha(Lv)^\beta g_{\alpha\beta}\,\rho\,dx
=
E(v)
\end{equation}
and
\eqref{scalar invariant 1}--\eqref{assumption 2 formula}.
Here $\rho$ is the Riemannian density, $\nabla$ is the Levi-Civita connection associated with $g$
and $a,b\in\mathbb{R}$ are parameters.

The tradition in elasticity theory is to express the parameters $a$ and $b$
in terms of the so-called \emph{Lam\'e parameters} $\lambda$ and $\mu$,
\begin{equation}
\label{Lame parameters}
a=\tfrac12\lambda+\mu,\quad b=-2\mu,
\end{equation}
which are assumed to satisfy conditions
\begin{equation}
\label{conditions on Lame parameters}
\mu>0,\qquad\lambda+\frac2d\,\mu>0,
\end{equation}
that guarantee strong convexity,
see, for example, \cite{rozenblum}.
Formula
\eqref{assumption 1 formula}
takes now the more familiar form
\begin{multline}
\label{potential energy in terms of Lame parameters}
E(v)
=
\int_M
\bigl(
\tfrac12\lambda(\operatorname{tr}\mathcal{S})^2+\mu\operatorname{tr}(\mathcal{S}^2)
\bigr)\,\rho\,dx
\\
=
\frac12
\int_M
\bigl(
\lambda(\nabla_\alpha v^\alpha)^2
+\mu
(\nabla_\alpha v_\beta+\nabla_\beta v_\alpha)\nabla^\alpha v^\beta
\bigr)\,\rho\,dx\,,
\end{multline}
see also \cite[formula (4.1)]{MR0106584}.
Substituting \eqref{potential energy in terms of Lame parameters}
into \eqref{definition of operator of linear elasticity}
and integrating by parts we arrive at the explicit formula for the
operator of linear elasticity (Lam\'e operator)
\begin{equation}
\label{operator of linear elasticity explicit formula}
(Lv)^\alpha
=
-\mu(\nabla_\beta\nabla^\beta v^\alpha+\operatorname{Ric}^\alpha{}_\beta v^\beta)
-(\lambda+\mu)\nabla^\alpha\nabla_\beta v^\beta,
\end{equation}
where $\operatorname{Ric}$ is Ricci curvature.

The eigenvalues of $L_\mathrm{prin}$ are as follows:
simple eigenvalue $(\lambda+2\mu)h^2$
and eigenvalue
$\mu h^2$ of multiplicity $d-1$,
where $h$ is defined by \eqref{hamiltonian}.
These correspond to longitudinal and transverse waves, respectively.
Our method requires the eigenvalues of the principal symbol to be simple,
see Assumption~\ref{assumption about eigenvalues of principal symbol being simple},
so further on in this subsection we restrict ourselves to the case $d=2$.

The operator $L$ does not fit into our scheme because it acts on 2-vectors as opposed to
2-columns of half-densities. In order to address this issue, we recast it as follows.

We first switch from 2-vectors to 2-columns of scalar functions by projecting onto a framing,
which requires the manifold $M$ to be parallelizable.
Poincar\'e's theorem
\cite{Milnor book} tells us that a closed connected 2-manifold is parallelizable
if and only if it is, topologically, a 2-torus, so further on in this subsection we restrict ourselves
to the case $M=\mathbb{T}^2$.

Let $e_j$, $j=1,2$, be a positively oriented orthonormal global framing on $\mathbb{T}^2$. In chosen local coordinates $x^\alpha$, $\alpha=1,2$, we denote by $e_j{}^\alpha$ the $\alpha$-th component of the $j$-th vector field.
Then the operator
\begin{equation*}
\label{scalars to vectors}
(Bu)^\alpha:=e_j{}^\alpha u^j
\end{equation*}
maps scalars to vectors and the operator
\begin{equation*}
\label{vectors to scalars}
(B^{-1}v)^j:=e^j{}_\alpha v^\alpha
\end{equation*}
maps vectors to scalars, see~\eqref{definition of coframe}.
We define the operator $L_\mathrm{scal}$ acting on 2-columns of scalar functions as
\begin{equation*}
\label{operator of linear elasticity explicit formula scalar}
L_\mathrm{scal}:=B^{-1}LB.
\end{equation*}
Finally, we turn $L_\mathrm{scal}$ into the operator
\begin{equation*}
\label{operator of linear elasticity explicit formula scalar}
L_{1/2}:=\rho^{1/2}L_\mathrm{scal}\,\rho^{-1/2}
\end{equation*}
acting on 2-columns of half-densities. Here $\rho$ is the Riemannian density.

The operator $L_{1/2}$ is now of the type considered in this paper, so we can apply to it our results.

We have
\begin{equation}
\label{operator of linear elasticity explicit formula scalar principal}
(L_{1/2})_\mathrm{prin}=
\mu h^2I
+
(\lambda+\mu)h^2pp^T,
\end{equation}
where
\begin{equation}
\label{column w}
p:=h^{-1}
\begin{pmatrix}
e_1{}^\alpha\xi_\alpha
\\
e_2{}^\alpha\xi_\alpha
\end{pmatrix}.
\end{equation}
The eigenvalues of $(L_{1/2})_\mathrm{prin}$ are
$h^{(1)}=\mu h^2$
and
$h^{(2)}=(\lambda+2\mu)h^2$,
and, in view of \eqref{conditions on Lame parameters},
these eigenvalues are simple.
Here $h$ is defined by formula \eqref{hamiltonian}.
The corresponding eigenprojections are
$P^{(1)}=I-pp^T$
and
$P^{(2)}=pp^T$.
Note that formula \eqref{conditions on Lame parameters} implies
\begin{equation}
\label{ratio of two eigenvalues of principal symbol}
h^{(2)}/h^{(1)}>2(1-d^{-1}).
\end{equation}

The subprincipal symbol of the operator $L_{1/2}$ is expressed via the torsion tensor
\begin{equation*}
\label{torsion definition}
T^\alpha{}_{\beta \gamma}= \Upsilon^\alpha{}_{\beta\gamma}-\Upsilon^\alpha{}_{\gamma \beta}
\end{equation*}
of the Weitzenb\"ock connection associated with our framing, see previous subsection for details.
In dimension two the torsion tensor is equivalent to a covector field
$t_\alpha:=\tfrac12T_\alpha{}^{\beta \gamma}E_{\beta\gamma}$, where
$E_{\alpha\beta}(x):=\rho(x)\, \varepsilon_{\alpha\beta}$, $\varepsilon_{12}=+1$,
compare with \eqref{definition of E}.
It is easy to see that $t$ is a closed 1-form: the fact that the exterior derivative of $t$ is zero is a consequence
of the fact that the Weitzenb\"ock connection is flat. Namely, let
$
\mathfrak{R}^\alpha{}_{\beta\gamma\delta}
=
\partial_\gamma\Upsilon^\alpha{}_{\delta\beta}
-
\partial_\delta\Upsilon^\alpha{}_{\gamma\beta}
+
\Upsilon^\alpha{}_{\gamma\kappa}
\Upsilon^\kappa{}_{\delta\beta}
-
\Upsilon^\alpha{}_{\delta\kappa}
\Upsilon^\kappa{}_{\gamma\beta}
$
be the curvature tensor of the Weitzenb\"ock connection (which is zero by definition).
Then $\mathfrak{R}$ and $dt$ are related as
$\mathfrak{R}_{\alpha\beta\gamma\delta}
=
E_{\alpha\beta}(dt)_{\gamma\delta}
=
(dt)_{\alpha\beta}E_{\gamma\delta}$.

Straightforward but lengthy calculations give us
\begin{equation}
\label{operator of linear elasticity explicit formula scalar subprincipal}
(L_{1/2})_\mathrm{sub}=
i(\lambda+3\mu)\,t^\alpha\xi_\alpha\,\epsilon\,,
\end{equation}
where
$
\epsilon:=
\begin{pmatrix}
0&1\\
-1&0
\end{pmatrix}
$.
The matrix $\epsilon$
%appearing in formula \eqref{operator of linear elasticity explicit formula scalar subprincipal}
admits a simple geometric interpretation.
Namely, let $u=\begin{pmatrix}
u^1\\
u^2
\end{pmatrix}\in\mathbb{R}^2$
be a 2-column of scalars.
Then $u\mapsto\epsilon u$ is the operator of rotation by $\pi/2$,  clockwise,  in the $\mathbb{R}^2$ plane.

One can show that
\begin{equation*}
\label{elasticity explanation 1}
\{P^{(j)},P^{(k)}\}=0,\qquad j,k=1,2,
\end{equation*}
\begin{equation*}
\label{elasticity explanation 2}
P^{(1)}\epsilon P^{(2)}
+
P^{(2)}\epsilon P^{(1)}
=
\epsilon\,,
\end{equation*}
\begin{equation*}
\label{elasticity explanation 3}
P^{(2)}Q^{(j)}P^{(1)}
-
P^{(1)}Q^{(j)}P^{(2)}
=
(-1)^j
(\lambda+3\mu)\,t^\alpha\xi_\alpha\,\epsilon\,,
\qquad
j=1,2,
\end{equation*}
so that Theorem \ref{Main results theorem 2} gives us
\begin{equation}
\label{operator of linear elasticity explicit formula scalar subprincipal projections}
(P_1)_\mathrm{sub}
=
(P_2)_\mathrm{sub}
=
0.
\end{equation}

There is an underlying reason for the subprincipal symbols of our pseudodifferential projections being zero
in this particular case, i.e.~for linear elasticity in dimension two.
In dimension two a 1-form $v$ can be written, locally, in terms of two scalar potentials
$\varphi$ and $\psi$ as
\begin{equation}
\label{1-form in terms of potentials}
v=d\varphi+*d\psi,
\end{equation}
where $*$ is the Hodge dual (rotation by $\pi/2$ in the cotangent fibre).
The operator of linear elasticity $L$ agrees well with the substitution \eqref{1-form in terms of potentials}
in that it decouples $L\,$, modulo lower order terms involving curvature,
into a pair of scalar operators acting on $\varphi$ and $\psi$ separately.
In the scalar case ($m=1$) the pseudodifferential projection from Theorem~\ref{Main results theorem 1}
would simply be the identity operator whose subprincipal symbol is, obviously, zero.

\subsection{The Dirichlet-to-Neumann map of linear elasticity}

Let $\Omega\subset\mathbb{R}^3$ be a bounded domain (connected open set) with smooth boundary $M$.
One could consider the more general case of a Riemannian 3-manifold with boundary but we refrain
from doing this in the current paper for the sake of simplicity.

%Let $n$ be the exterior normal to $M$. \textcolor{red}{Do we really need it?}

We denote Cartesian coordinates in $\mathbb{R}^3$ by $y=(y^1,y^2,y^3)$
and local coordinates on $M$ by $x=(x^1,x^2)$. By $x^3$ we shall denote
the signed distance from a point $P\in\mathbb{R}^3$ to $M$,
positive for $P\not\in\overline{\Omega}$ and negative for $P\in\Omega$.
Clearly, $(x^1,x^2,x^3)$ are local coordinates in $\mathbb{R}^3$.
We assume that the orientation of $(x^1,x^2,x^3)$ agrees with that of $y=(y^1,y^2,y^3)$,
and this defines an orientation of $M$.

Consider the variational functional of 3-dimensional linear elasticity
\begin{equation}
\label{potential energy in terms of Lame parameters 3d}
E(v)
=
\frac12
\int_\Omega
\bigl(
\lambda(\partial_\alpha v^\alpha)^2
+\mu
(\partial_\alpha v_\beta+\partial_\beta v_\alpha)\partial^\alpha v^\beta
\bigr)\,dy\,.
\end{equation}
Integration by parts gives
\begin{equation}
\label{definition of traction}
E(v)
=
\frac12\int_\Omega
v_\alpha(Lv)^\alpha\,dy\,
+
\frac12\int_M
v_\alpha(Tv)^\alpha\,dS\,,
\end{equation}
where $L$ is the operator of linear elasticity (see previous subsection)
and $T$ is \emph{traction}.
Traction $T$ is a first order linear partial differential operator mapping
a 3-dimensional vector field in $\Omega$ to a 3-dimensional vector field defined on $M$.

Let $w$ be a 3-dimensional vector field defined on $M$.
It is known that the Dirichlet problem for the elasticity operator
\begin{equation}
\label{Dirichlet problem for the elasticity operator 1}
Lv=0,
\end{equation}
\begin{equation}
\label{Dirichlet problem for the elasticity operator 2}
v|_M=w
\end{equation}
has a unique solution.
Let $L_D^{-1}$ be the linear operator mapping $w$ to $v$, a solution to
\eqref{Dirichlet problem for the elasticity operator 1},
\eqref{Dirichlet problem for the elasticity operator 2}.

The Dirichlet-to-Neumann map of linear elasticity is the linear operator
\begin{equation}
\label{Dirichlet-to-Neumann map of linear elasticity definition}
DN:=TL_D^{-1}.
\end{equation}
This operator acts in the linear space of
3-dimensional vector fields defined on $M$.

Further on when working with 3-dimensional vector fields defined on $M$ we will use local coordinates
$(x^1,x^2,x^3)$, so that our vector fields have the structure $v=(v^1,v^2,v^3)$,
where $(v^1,v^2)$ is a vector field on $M$ and $v^3$ is the normal component of $v$
which can be viewed as a scalar function. The inner product on such vector fields is defined as
\begin{equation}
\label{Dirichlet-to-Neumann inner product}
(v,w):=\int_M(g_{\alpha\beta}v^\alpha w^\beta+v^3w^3)\,dS
\end{equation}
where $g$ is the Riemannian metric on $M$ induced by the Euclidean metric on $\mathbb{R}^3$
and summation is carried out over $\alpha,\beta=1,2$.

The operator $DN$ is a first order $3\times3$ matrix pseudodifferential operator,
symmetric with respect to the inner product
\eqref{Dirichlet-to-Neumann inner product}.
Furthermore, it is self-adjoint as an operator from $H^1(M)$ to $L^2(M)$,
elliptic, nonnegative and has a 6-dimensional kernel
(rigid translations and rotations).

Let $\nu=\frac\lambda{2(\lambda+\mu)}$ be Poisson's ratio.
The eigenvalues of the principal symbol of the operator $DN$,
enumerated in increasing order $0<h^{(1)}\le h^{(2)}<h^{(3)}$, are expressed via  Poisson's ratio as follows:
if $-1<\nu<1/4$ then
\begin{equation}
\label{Dirichlet-to-Neumann eigenvalues of the principal symbol for nu less than one quarter}
h^{(1)}
=
\frac{2\mu h}{3-4\nu}\,,
\qquad
h^{(2)}
=
\mu h\,,
\qquad
h^{(3)}
=
2\mu h\,,
\end{equation}
and if $1/4<\nu<1/2$ then
\begin{equation}
\label{Dirichlet-to-Neumann eigenvalues of the principal symbol for nu greater than one quarter}
h^{(1)}
=
\mu h\,,
\qquad
h^{(2)}
=
\frac{2\mu h}{3-4\nu}\,,
\qquad
h^{(3)}
=
2\mu h\,.
\end{equation}
For $\nu=1/4$ we get a double eigenvalue $h^{(1)}=h^{(2)}=\mu h$.
Here $h$ is defined by formula \eqref{hamiltonian},
with summation carried out over $\alpha,\beta=1,2$.

Formulae
\eqref{Dirichlet-to-Neumann eigenvalues of the principal symbol for nu less than one quarter}
and
\eqref{Dirichlet-to-Neumann eigenvalues of the principal symbol for nu greater than one quarter}
can be derived by considering an elastic half-space $x^3<0$ and performing separation of variables.
Alternatively, one can compute the principal symbol of the operator $DN$ and its eigenvalues by expressing
this operator in terms of single and double layer potentials, see
\cite{AgranovichLevitin}.
Note that for all $-1<\nu<1/2$ we have $2\le h^{(3)}/h^{(1)}<7$,
compare with~\eqref{ratio of two eigenvalues of principal symbol}.

As explained in the previous subsection, in order to construct our pseudodifferential projections we need
eigenvalues of the principal symbol to be simple
and $M$ to be parallelizable, which means that our construction would work when $\nu\ne1/4$
and $M$ is, topologically, a disjoint union of 2-tori. Admissible examples of 
3-dimensional domains $\Omega$ would be, say,
a doughnut or a toroidal shell.

Performing a detailed construction of pseudodifferential projections for the operator $DN$
is a lengthy and technical enterprise which
would shift the focus of this paper away from its core topic.
We, therefore, decided not to carry out a full analysis of this meaningful example in the current paper
and plan to address this matter comprehensively elsewhere.

\section*{Acknowledgements}
\addcontentsline{toc}{section}{Acknowledgements}

We are grateful to Alex Strohmaier for interesting comments at an early stage of this project, for bringing the reference \cite{bolte} to our attention and for stimulating discussions. 
We would also like to thank Nikolai Saveliev and Grigori Rozenbloum for useful bibliographic suggestions, as well as Daniel Grieser and an anonymous referee for insightful comments.

MC was supported by a Leverhulme Trust Research Project Grant RPG-2019-240 and by a Research Grant (Scheme 4) of the London Mathematical Society. Both are gratefully acknowledged.

%\begin{appendices}
%
%\section{Something, maybe}
%
%?
%
%\end{appendices}

\end{document}